\newtheorem{theorem}{Theorem}[section]
\newtheorem{proposition}[theorem]{Proposition}
\newtheorem{conjecture}[theorem]{Conjecture}
\newtheorem{definition}[theorem]{Definition}
\newtheorem{lemma}[theorem]{Lemma}
\newtheorem*{claim*}{Claim}
\theoremstyle{definition}
\newtheorem*{question*}{Question}
\newtheorem*{theorem*}{Theorem}
\newtheorem{remark}[theorem]{Remark}
\def\cK{\mathcal{K}}
\def\cK{\mathcal{K}}
\def\cP{\mathcal{P}}
\def\cK{\mathcal{X}}
\def\fqn2{\mathbb{F}_{q^{\frac{N}{2}}}}
\def\fq{\mathbb{F}_q}
\author[]{Alexander A. Davydov}
\author[]{Massimo Giulietti}
\author[]{Stefano Marcugini}
\author[]{Fernanda Pambianco}
\thanks{2000 {\em Math. Subj. Class.}: 51E22}
\thanks{{\em Keywords}: projective space; complete cap; complete arc.}
\thanks{This research was performed within the activity of GNSAGA of the
Italian INDAM, with the financial support of the Italian Ministry
MIUR, project {\em Strutture Geometriche, Combinatoria e loro
Applicazioni}, PRIN 2006-2007}
\title[constructions of complete caps]{New inductive constructions of complete caps in $PG(N,q)$, $q$ even}
\begin{document}
\begin{abstract}
Some new families of small complete caps in $PG(N,q)$, $q$
even, are described. By using inductive arguments, the problem
of the construction of small complete caps in projective spaces
of arbitrary dimensions is reduced to the same problem in the
plane. The caps constructed in this paper provide an
improvement on the currently known upper bounds on the size of
the smallest complete cap in $PG(N,q),$ $N\geq 4,$ for all
$q\geq 2^{3}.$ In particular, substantial improvements are
obtained for infinite values of $q$ square, including $
q=2^{2Cm},$ $C\geq 5,$ $m\geq 3;$ for $q=2^{Cm},$ $C\geq 5,$
$m\geq 9,$ with $C,m$ odd; and for all $q\leq 2^{18}.$
\end{abstract}
\maketitle

\section{Introduction}
A cap in $PG(N,q)$, the projective $N$-dimensional space over the
finite field with $q$ elements $\fq$, is a set of points no three of
which are collinear. A cap of size $k$ is denoted as a $k$-cap. When
$N=2$, a cap is also called an arc in $PG(2,q)$.

A cap is said to be complete if it is not contained in a larger
cap. The most important problem on caps is to determine the
spectrum of possible values of $k$ for which there exists a
complete $k$-cap in $PG(N,q)$; for the known results, see
\cite{HS}, \cite{P2}, \cite{P1}, and the references therein. The smallest and the largest sizes of a complete cap are of
particular interest. This work is mainly devoted to the
construction of small complete caps that provide upper bounds
on the smallest possible size of a complete cap. New values of $k$ in the spectrum are also obtained.

Interestingly, the problem of determining the possible sizes of complete caps is related to Coding Theory. In
fact, complete $k $-caps in $PG(N,q)$ with $k>N+1$ and linear
quasi-perfect $[k,k-N-1,4]_{q}2$ -codes over $\mathbb{F}_{q}$
with covering radius $2$ are equivalent objects (with the
exceptions of the complete $5$-cap in $PG(3,2)$ giving rise to
a binary $[5,1,5]_{2}2$-code, and the complete $11$-cap in
$PG(4,3)$ corresponding to the Golay $[11,6,5]_{3}2$-code over
${\mathbb{F}}_{3}$), see e.g. \cite{GDT}, \cite{GiulPast},
\cite{HS1}.

Classical examples of
complete caps are non-singular conics for $N=2$
and elliptic quadrics for $N=3$; also, for both $N=2$ and
$N=3$, most of the known explicit constructions of complete
caps are based on subsets of points of a quadric. For $N\geq 4$
no
such natural
model for complete caps exists, a consequence of
that being the rarity of constructions of complete caps.

In this paper we describe new infinite families of complete
caps in $PG(N,q)$ for $N\geq 4$ and even $q$,
which arise as a result of some inductive procedures based on complete arcs in $PG(2,q)$.
Our main construction is described in Theorems \ref{TEO1} and
\ref{TEODD1}, see also Theorem~\ref{SINTESI} below. It should be
noted that an arbitrary complete plane arc can be taken as the
starting point for this construction; then, all known (and future)
results on the spectra of sizes of complete plane arcs (see e.g.
\cite{ABA}-\cite{DGMP}, \cite{FainaJiul}, \cite{Gi}-\cite{KV},
\cite{OS}, \cite{SZ}) provide results in higher dimension via
Theorem \ref{SINTESI}.
\begin{theorem}
\label{SINTESI} Let $q>8$, $q$ even. Assume that there exists a
complete $k$ -arc in $PG(2,q)$ with $k<q-5.$ Then there exists
a complete $n$-cap in $ PG(N,q)$ with
\begin{equation*}
n=\left\{
\begin{array}{l}
\frac{k}{q}q^{\frac{N}{2}}+3(q^{\frac{N-2}{2}}+q^{\frac{N-4}{2}}+\ldots
+q)-N+3,\quad N\geq 4\text{ even} \bigskip
\\
(2+\frac{k}{q})q^{\frac{N-1}{2}}+3(q^{\frac{N-3}{2}}+q^{\frac{N-5}{2}
}+\ldots +q)-N+4,\quad N\geq 5\text{ odd}
\end{array}
\right. \,.
\end{equation*}
\end{theorem}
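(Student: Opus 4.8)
The plan is to obtain Theorem~\ref{SINTESI} as a direct consequence of the two main construction theorems, Theorems~\ref{TEO1} and \ref{TEODD1}, which are stated but whose proofs I have not yet seen. Since the abstract announces an inductive procedure that reduces the higher-dimensional problem to the planar one, I expect Theorems~\ref{TEO1} and \ref{TEODD1} to provide, respectively, the even-dimensional and odd-dimensional inductive steps: starting from a complete $k$-arc in $PG(2,q)$, they should build a complete cap in $PG(N,q)$ whose size is controlled by $k$, $q$, and $N$. The role of Theorem~\ref{SINTESI} is then purely to package these two results into a single clean statement, so the ``proof'' is essentially a verification that the cardinality formulas produced by the two constructions coincide with the two branches of the displayed formula for $n$.

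Concretely, I would proceed as follows. First I would recall the precise output of Theorem~\ref{TEO1}, which handles $N\geq 4$ even. I expect its recursive structure to increase the dimension by $2$ at each step, multiplying the cap size by roughly $q$ and adding a correction term of size about $3q^{(N-2)/2}$ per step; summing the geometric contribution $3(q^{(N-2)/2}+q^{(N-4)/2}+\cdots+q)$ across the steps, together with the leading term $\frac{k}{q}q^{N/2}$ inherited from the planar arc and the additive constant $-N+3$, should reproduce the first branch exactly. Second, for $N\geq 5$ odd I would invoke Theorem~\ref{TEODD1}; here the construction presumably performs one initial step that raises the dimension from $2$ to $3$ (or, equivalently, seeds an odd-dimensional base case) and then applies the even-type doubling. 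This extra base step is what produces the modified leading coefficient $2+\frac{k}{q}$ in place of $\frac{k}{q}$, the shortened geometric sum $3(q^{(N-3)/2}+\cdots+q)$, and the shifted constant $-N+4$. In both cases the hypotheses $q>8$, $q$ even, and $k<q-5$ are exactly the conditions under which the inductive step preserves completeness, so I would simply check that they are the hypotheses required by Theorems~\ref{TEO1} and \ref{TEODD1}.

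The only genuine content to verify is the bookkeeping: that the per-step size increments of the two construction theorems telescope into the closed-form geometric sums, and that the additive constants accumulate to $-N+3$ and $-N+4$ respectively. I would do this by induction on $N$ (stepping by $2$), taking the relevant construction theorem as the inductive step and the planar complete $k$-arc (for the even case) or its first lift (for the odd case) as the base. Completeness of the resulting cap is inherited at each stage from the corresponding theorem and so requires no separate argument here.

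The main obstacle, and the part I cannot fully execute without seeing the statements of Theorems~\ref{TEO1} and \ref{TEODD1}, is matching the exact form of their size formulas to the two branches above. In particular I expect the delicate point to be the origin of the constants $-N+3$ and $-N+4$: these linear-in-$N$ corrections suggest that each inductive doubling of the dimension discards (or fails to add) a bounded number of points, so that over the $\tfrac{N-2}{2}$ (resp. $\tfrac{N-3}{2}$) steps one loses a quantity linear in $N$. Confirming that this accounting is consistent between the two parities, and that the odd case really differs from the even case only by the single extra seeding step reflected in the $2+\frac{k}{q}$ coefficient, is where the real care is needed; everything else is a routine summation of a geometric series.
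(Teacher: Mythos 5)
Your overall strategy is the same as the paper's: Theorem~\ref{SINTESI} is not proved by a fresh induction but is exactly Theorem~\ref{TEO1} (for $N$ even) and Theorem~\ref{TEODD1} (for $N$ odd) with the size formulas rewritten, using $\frac{k}{q}q^{\frac{N}{2}}+3q^{\frac{N-2}{2}}=(k+3)q^{\frac{N-2}{2}}$ and $(2+\frac{k}{q})q^{\frac{N-1}{2}}+3q^{\frac{N-3}{2}}=2q^{\frac{N-1}{2}}+(k+3)q^{\frac{N-3}{2}}$. In particular, no induction on $N$ is needed at this level: each of those theorems already produces, in one shot, a cap in $PG(N,q)$ with a closed-form size (the induction is hidden inside the auxiliary caps $K^{(2i+1)}_{m_1,m_2}$ and Proposition~\ref{chiave}, not in a dimension-stepping recursion on complete caps). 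Your picture of the odd case is also backwards: the coefficient $2+\frac{k}{q}$ does not come from an initial seeding step from dimension $2$ to $3$, but from the final step, in which the even-dimensional cap is embedded in a hyperplane $H_0$ of $PG(N,q)$ and the product cap $(K_0:\cP^{\frac{N-1}{2}})$, of size $2q^{\frac{N-1}{2}}$, is adjoined. This last point is cosmetic; the two issues below are not.

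First, the hypotheses of Theorems~\ref{TEO1} and \ref{TEODD1} are \emph{not} merely ``$q>8$, $q$ even, $k<q-5$'': Theorem~\ref{TEO1} requires condition (Ia) ($K$ affinely complete, $(0,0,1)$ covered by secants of $K$, $1\notin S_\infty(K)$), and Theorem~\ref{TEODD1} requires condition (*) ($K$ affine, $1\notin S_\infty(K)$, and $Y_2\neq Y_1^2$ for every $(1,Y_1,Y_2)\in K$). So the derivation needs a normalization step that your plan omits: a complete $k$-arc with $k<q-5$ has an external line, which can be mapped to $l_\infty$ to make $K$ affine (hence affinely complete, with $(0,0,1)$ covered), after which Lemma~\ref{primo} arranges $1\notin S_\infty(K)$, and Lemma~\ref{odd1} arranges property (*) for the odd case. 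Second, and more seriously, the conclusion of Theorem~\ref{TEO1} is trichotomous: the constructed cap has exactly the announced size only when $Cov_\infty(K)=\fq$; otherwise one or two extra points must be added, and the complete cap has size $n+1$ or $n+2$, so the exact formula in Theorem~\ref{SINTESI} would fail. Closing this gap requires using the completeness of the original planar arc (which your proposal never invokes beyond quoting hypotheses): once a complete arc is placed in affine position, its secants cover every point of $l_\infty$, so $Cov_\infty(K)=\fq$ and the first alternative of Theorem~\ref{TEO1} applies. A merely affinely complete arc would not give the stated value of $n$, which is precisely why the distinction between ``complete'' and ``affinely complete'' matters here.
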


Other inductive constructions presented in this paper (see Theorems \ref{TEO2}, \ref{TEO3}, \ref{TEODD2}) allow to obtain
smaller complete caps in $PG(N,q)$ from
complete $k$-arcs having some special properties,
which are connected with a new concept of "sum-points" for
a $k$-arc (see Section 2). Significantly, it
turns out that these properties are possessed by the smallest known
complete arcs in $PG(2,q)$ for any even $q\leq 2^{17}$ (see
Table 1), by the complete $k$-arcs of \cite{Gi} with $k\leq (q+4)/2$,
and by the Abatangelo complete $ (q+8)/3$-arcs of \cite{ABA}
(see Lemmas \ref{Lem_q/2+2} and \ref {Lem_Abatangelo}).


As a consequence of our results, substantial improvements on
the known bounds on the size $t_{2}(N,q)$ of the smallest
complete cap in $PG(N,q)$ (or, equivalently, the minimal length
$k$ for which there exists an $ [k,k-N-1,4]_{q}2$-code) are
obtained for $q\geq 8$.
In \cite{PS} it was proved that
\begin{equation}
t_{2}(N,q)\leq \left\{
\begin{array}{l}
q^{\frac{N}{2}}+s_{N,q},\,\,\,N\,\,\text{even}\medskip \\
(2+1)q^{\frac{N-1}{2}}+s_{N,q},\,\,\,N\,\,\text{odd}
\end{array}
\right. ,  \label{PSB}
\end{equation}
where
\begin{equation*}
s_{N,q}=3(q^{\lfloor \frac{N-2}{2}\rfloor }+q^{\lfloor \frac{N-2}{2}\rfloor
-1}+\ldots +q)+2\,.  \label{esse}
\end{equation*}
For about a decade, (\ref{PSB}) was the best known upper bound
on $t_{2}(N,q) $, $q>2$, $N>3$, with the exceptions of few
small values of both $N$ and $q$ , see \cite{HS}, \cite{P2},
\cite{P1}, \cite{OS}. Inequality~(\ref{PSB}) was improved in
\cite{Gi}:
\begin{equation}
t_{2}(N,q)\leq \left\{
\begin{array}{l}
\frac{1}{2}q^{\frac{N}{2}}+s_{N,q},\,\,\,N\,\,\text{even} \medskip \\
(2+\frac{1}{2})q^{\frac{N-1}{2}}+s_{N,q},\,\,\,N\,\,\text{odd}
\end{array}
\right. ,\,q\geq 32.  \label{giu2}
\end{equation}
Better upper bounds were obtained for specific values of
$q\leq2^{15}$, see also \cite{GiulPast}. Also,
in~\cite{GiulPast} it was proved that
\begin{equation}
t_{2}(N,q)\leq \frac{1}{3}q^{\frac{N}{2}}+\frac{5}{3}q^{\frac{N-2}{2}
}+s_{N,q}-2,\quad q\geq 2^{8}\text{ square},\,\,\,N\geq 4\text{ even}.
\label{giupast}
\end{equation}

All the above upper bounds on $t_{2}(N,q)$ are improved in this
paper. Theorems \ref{NEW1} and \ref{NEW4} yield the following inequality.
\begin{itemize}
\item
For $q>8$, $q$ even,
\begin{equation}
\quad t_{2}(N,q)\leq \left\{
\begin{array}{l}
\frac{t_{2}(2,q)}{q}q^{\frac{N}{2}}+s_{N,q}-N+1,\quad N\text{ even } \bigskip
\\
(2+\frac{t_{2}(2,q)}{q})q^{\frac{N-1}{2}}+s_{N,q}-N+2,\quad N\text{ odd }
\end{array}
\right. .  \label{apr07}
\end{equation}
\end{itemize}



Then, each upper bounds on $t_{2}(2,q)$ for $q$
even gives rise to an upper bound on $t_{2}(N,q)$.
For instance, from  \cite[Remark
2]{SZ} the following inequalities are obtained.
\begin{itemize}
\item For $q=2^{2Cm}\geq 2^{30}$, $m\geq 2$, $C\geq 5$,
\begin{equation}
t_{2}(N,q)\leq \left\{
\begin{array}{l}
\frac{1}{2^{m-1}}q^{\frac{N}{2}}+s_{N,q}-N+1,\,\,\,N\,\,\text{even}
\medskip \\
(2+\frac{1}{2^{m-1}})q^{\frac{N-1}{2}}+s_{N,q}-N+2,\,\,\,N\,\,\text{odd}
\end{array}
\right. .  \label{form_Intr_Szon1}
\end{equation}
\item For $q=2^{Cm}\geq 2^{30}$, $m\geq 2$, $C\geq 5$,
\begin{equation}
t_{2}(N,q)\leq \left\{
\begin{array}{l}
\frac{1}{2^{(m/3)-1}}q^{\frac{N}{2}}+s_{N,q}-N+1,\,\,\,N\,\,\text{even} \medskip \\
(2+\frac{1}{2^{(m/3)-1}})q^{\frac{N-1}{2}}+s_{N,q}-N+2,\,\,\,N\,\,\text{odd}
\end{array}
\right. .  \label{form_Intr_Szon2}
\end{equation}
\end{itemize}
For $q=2^{14},2^{18}$ we can take into account the bounds
$t_{2}(2,q)\leq 6(\sqrt{q} -1),$ see \cite{DGMP}.

Other results arise by
Theorems \ref{NEW2}, \ref{NEW3}, and \ref{NEW5}, together with Table 1 and Lemmas
\ref{Lem_q/2+2} and \ref{Lem_Abatangelo}.

\begin{itemize}
\item For $q\geq 32,$
\begin{equation}
t_{2}(N,q)\leq \left\{
\begin{array}{l}
\frac{1}{2}q^{\frac{N}{2}}+\frac{5}{6}s_{N,q}+\frac{1}{3},\quad N\geq 4\text{
even} \bigskip
\\
(2+\frac{1}{2})q^{\frac{N-1}{2}}+\frac{5}{6}s_{N,q}+\frac{1}{3},\quad N\geq 5
\text{ odd}
\end{array}
\right. .  \label{form_Intr_a}
\end{equation}
\item For $q\geq 2^{6}$ square,
\begin{equation}
t_{2}(N,q)\leq \left\{
\begin{array}{l}
\frac{1}{3}q^{\frac{N}{2}}+s_{N,q}+\frac{2}{3},\quad N\geq 4\text{ even}\bigskip
\\
(2+\frac{1}{3})q^{\frac{N-1}{2}}+s_{N,q}+\frac{2}{3},\quad N\geq 5\text{ odd}
\end{array}
\right. .  \label{form_Intr_b}
\end{equation}

\item For $q\leq 2^{15}$,
\begin{equation*}
\quad t_{2}(N,q)\leq \left\{
\begin{array}{l}
\frac{t_{q}}{q}q^{\frac{N}{2}}+\frac{t_{q}}{3q}(s_{N,q}-2),\quad N\geq 4
\text{ even}\bigskip  \\
(2+\frac{t_{q}}{q})q^{\frac{N-1}{2}}+\frac{t_{q}}{3q}(s_{N,q}-2),\quad N\geq
5\text{ odd}
\end{array}
\right. \,.  \label{estim_sum-point}
\end{equation*}
where  $t_{q}$ is as in the following table.
$$
\begin{array}{|c|c|c|c|c|c|c|c|c|c|c|c|c|c|}
\hline
\log _{2}q & 3 & 4 & 5 & 6 & 7 & 8 & 9 & 10 & 11 & 12 & 13 & 14 & 15 \\
\hline
t_{q} & 6 & 9 & 14 & 22 & 34 & 55 & 86 & 124 & 201 & 307 & 461 & 665 & 1026
\\ \hline
\end{array}
$$
\end{itemize}



It is easy to see that these new upper bounds improve the known
bounds on $t_2(N,q)$ for any $q\ge 8$ and any dimension $N\ge 4$. In
order to assess the above improvements, we introduce and discuss two
parameters, $\Delta_{N,q}$ and $R_{N,q}$. Define $\Delta _{N,q}$ as
the difference between the best known upper bounds on $t_{2}(N,q)$
and the new bounds obtained in this work. By $R_{N,q}$ we denote the
ratio between the coefficients of the main term $q^{\left\lfloor
\frac{N}{2}\right\rfloor }$ in our upper bounds and  in the best
known ones.

If $q$ is not a square, then $ \Delta _{N,q}\gtrsim
\frac{1}{6}s_{N,q}$; if in addition  $q=2^{Cm}$, $m\geq 9$, $C\geq
5$, then  $\Delta _{N,q}\gtrsim
(\frac{1}{2}-\frac{1}{2^{(m/3)-1}})q^{\left\lfloor \frac{N}{2}
\right\rfloor }$. If $q$ is a square, then $ \Delta _{N,q}\gtrsim
\frac{5}{3}q^{\frac{N-2}{2}}$ for $N$ even, and $\Delta
_{N,q}\gtrsim \frac{1}{6}q^{\frac{N-1}{2}}$ for $N$ odd.
Furthermore, if $q=2^{2Cm}$, $m\geq 3$, $C\geq 5$, then $\Delta
_{N,q}\gtrsim (\frac{1}{3}-\frac{1}{2^{m-1}})q^{\frac{N}{2}}+
\frac{5}{3}q^{\frac{N-2}{2}}$ for $N$ even, and $\Delta
_{N,q}\gtrsim ( \frac{1}{2}-\frac{1}{2^{m-1}})q^{\frac{N-1}{2}}$ for
$N$ odd.

If $q$ is a square then $%
R_{N,q}\approx \frac{14}{15}$ for $N$ odd. If $q=2^{2Cm}$,
$m\geq 3$, $C\geq 5$, then $R_{N,q}\approx \frac{3}{2^{m-1}}$
for $N$ even, and $ R_{N,q}\approx \frac{2^{m}+1}{5\cdot
2^{m-2}}$ for $N$ odd. If $q=2^{Cm}$, $ m\geq 9$, $C\geq 5,$
$m,C$ odd$,$ then $R_{N,q}\approx \frac{1}{2^{(m/3)-2}}$ for
$N$ even, and $R_{N,q}\approx \frac{2^{(m/3)}+1}{5\cdot
2^{(m/3)-2}}$ for $N$ odd.


For $2^{3}\leq q\leq 2^{18},$ we list only some of the values
of the new upper bounds on $t_{2}(N,q)$ obtained in this work,
and those of the corresponding $\Delta_{N,q}$. In each entry $\Delta_{N,q}$ of the
table, we cite the paper where the best previously known bound was
proved.
\begin{equation*}
\begin{array}{|c||@{}r@{\,}|r||@{}r@{\,}|r||@{}r@{\,}|r||@{}r|r|}
\hline
N &
\begin{array}{@{\,}c@{}}
t_{2}(N,2^{3})
\end{array}
& \Delta _{N,2^{3}} &
\begin{array}{@{\,}c@{}}
t_{2}(N,2^{4})
\end{array}
& \Delta _{N,2^{4}} &
\begin{array}{@{\,}c@{}}
t_{2}(N,2^{5})
\end{array}
& \Delta _{N,2^{5}} &
\begin{array}{@{\,}c@{}}
t_{2}(N,2^{6})
\end{array}
& \Delta _{N,2^{6}} \\ \hline
4 & \leq 54 & 18\text{ \cite{OS}} & \leq 153 & 25\text{ \cite{GiulPast}} & \leq 462 & 148
\text{ \cite{Gi}} & \leq 1430 & 812\text{ \cite{Gi}} \\ \hline
5 & \leq 182 & 36\text{ \cite{PS}} & \leq 665 & 153\text{ \cite{PS}} & \leq 2510 & 148\text{
\cite{Gi}} & \leq 9622 & 812\text{ \cite{Gi}} \\ \hline
6 & \leq 438 & 292\text{ \cite{PS}} & \leq 2457 & 409\text{ \cite{GiulPast}} & \leq 14798 &
4756\text{ \cite{Gi}} & \leq 91542 & 52012\text{ \cite{Gi}} \\ \hline
\end{array}
\end{equation*}

Some comparisons are also given  after Theorems \ref{TEO3} and
\ref{TEODD2}.


The paper is organized as follows. Section \ref{s2} contains some
preliminary ideas and results on $k$-arcs in $PG(2,q)$. The concept
of sum-points for a $k$-arc is introduced, and $k$-arcs with only
one sum-point are investigated. These arcs will be the base for some
of the inductive constructions of complete caps that are described
in Section \ref{main} for $N$ even, and Section \ref{secODD} for $N$
odd. Most of these caps are such that the intersection with an
$M$-dimensional subspace of $PG(N,q)$ is a cap of the same type
$K^{(M)}_{m_1,m_2}$, see (\ref{m1m2}). Being quite technical, the
investigation of caps of type $K^{(M)}_{m_1,m_2}$ is postponed in
the Appendix.

\section{Sum-points for plane arcs}\label{s2}
Throughout the paper, $q$ is a power of $2$. Let  $\fq$ denote the
finite field with $q$ elements,  and let $\fq^{\ast }=\fq\setminus
\{0\}$. Let $X_{0},X_{1},X_{2}$ denote homogeneous coordinates for
points of $PG(2,q)$.

In this section we prove some preliminary results on plane arcs. Let
$l_\infty$ be the line of $PG(2,q)$ of equation $X_0=0$.  The points
of an  arc $K$  not lying on $l_\infty$ are the \emph{affine points
} of $K$, and the subset of affine points of $K$ is the \emph{affine
part }of $K$. An arc is said to be {\em affine} if it coincides with
its affine part. An {\em affinely complete arc} is an affine arc
whose secants cover all the points in $PG(2,q)\setminus l_\infty$.
We recall that an $R$-secant of $K$ is a line $l$ such that $\mid
l\cap K\mid =R$. As usual, we say that a point is written in his
normalized form if the first nonzero coordinate is equal to $1$.

Let $K$ be a complete arc in $PG(2,q)$, and let $Q$ be a point in
$PG(2,q)\setminus K$ written in its normalized form. For every
secant $l$ of $K$ through $Q$, let $c_1^{(l)},c_2^{(l)}$ be the
elements in  $\fq^\ast$ such that
$$
Q=c_1^{(l)}P_1+c_2^{(l)}P_2
$$
where $P_1$ and $P_2$ are the points on $l\cap K$ written in their
normalized form.
\begin{definition} The point $Q$ is said to be a {\em sum-point} for
$K$ if $c_1^{(l)}=c_2^{(l)}$ for every secant $l$ of $K$ through
$Q$.
\end{definition}

%

\begin{remark}
In general, collineations do not preserve the number of sum-points
for an arc. In this sense the concept of \textquotedblleft
sum-points\textquotedblright\ is \textquotedblleft not
geometrical\textquotedblright .
\end{remark}

We denote by $\beta (K)$ the number of sum-points for a complete arc
$K$. When $\beta (K)=1$, we denote by $p(K)$  the number of secants
of $K$ passing through the only sum-point.


\begin{lemma}
\label{Lem_beta(K)>=1}
 Let $K$  be a complete arc in
$PG(2,q)$. Then $\beta(K)\ge 1$.
\end{lemma}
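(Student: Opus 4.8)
The plan is to exhibit an explicit sum-point lying on the line $l_\infty$, exploiting the fact that $q$ is a power of $2$. Since $K$ is an arc, no three of its points are collinear, so $l_\infty$ meets $K$ in at most two points; I would therefore argue according to the value of $|K\cap l_\infty|$. The guiding principle is that a point at infinity has vanishing $X_0$-coordinate, and in characteristic two this forces the two coefficients of a secant decomposition into \emph{affine} points to coincide.

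The key computation is the following. Suppose $Q\in l_\infty$ with $Q\notin K$, written in normalized form, and let $l\neq l_\infty$ be any secant of $K$ through $Q$. Since $l$ meets $l_\infty$ only at $Q$ and $Q\notin K$, the two points $P_1,P_2\in l\cap K$ are both affine, hence have normalized form $(1,\ast,\ast)$. Reading off the $X_0$-coordinate of $Q=c_1^{(l)}P_1+c_2^{(l)}P_2$ gives $0=c_1^{(l)}+c_2^{(l)}$, so that $c_1^{(l)}=c_2^{(l)}$ because $\fq$ has characteristic two. Thus \emph{every} secant through such a $Q$ other than $l_\infty$ itself automatically satisfies the sum-point condition, and the only possible obstruction is the secant $l_\infty$.

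It then remains to choose $Q$ so that $l_\infty$, when it is a secant, also behaves well. If $|K\cap l_\infty|\le 1$, then $l_\infty$ is not a secant of $K$, and any point $Q\in l_\infty\setminus K$ (such a point exists because $l_\infty$ contains $q+1>2$ points) is a sum-point by the computation above, giving $\beta(K)\ge 1$. If $|K\cap l_\infty|=2$, say $K\cap l_\infty=\{P_1,P_2\}$ in normalized form, I would take $Q$ to be the normalization of the vector sum $P_1+P_2$; since $l_\infty$ is a linear subspace, $Q\in l_\infty$, and $Q\neq P_1,P_2$ forces $Q\notin K$. Writing the normalized point as $Q=\lambda(P_1+P_2)=\lambda P_1+\lambda P_2$ shows $c_1^{(l_\infty)}=c_2^{(l_\infty)}=\lambda$, while all remaining secants through $Q$ are covered by the previous paragraph; hence $Q$ is a sum-point.

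The main obstacle is precisely the case $|K\cap l_\infty|=2$, in which $l_\infty$ is a genuine secant and the generic argument no longer applies to it. The remedy is the specific choice $Q=P_1+P_2$, which is exactly the unique point of $l_\infty$ on which the two coefficients of the decomposition along $l_\infty$ are forced to be equal; the only point needing care there is checking that the passage to normalized form preserves the equality $c_1^{(l_\infty)}=c_2^{(l_\infty)}$, which follows since normalization multiplies both coefficients by the same scalar $\lambda$.
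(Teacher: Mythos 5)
Your proof is correct and takes essentially the same route as the paper's: the same characteristic-two computation (reading off the $X_0$-coordinate shows any decomposition of a point of $l_\infty$ into two affine points of $K$ has equal coefficients), the same case split on $|K\cap l_\infty|$, and the same choice of the sum of the two normalized points as the sum-point when $l_\infty$ is a $2$-secant. Your explicit check that normalization rescales both coefficients by the same $\lambda$ merely spells out what the paper dismisses as ``clearly.''
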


\begin{proof} Let $K$ be a complete arc in $PG(2,q)$. Note that, as $q$ is
even,
$$
(0,m,m')=c_1(1,a,b)+c_2(1,a',b'),\qquad (1,a,b)\neq (1,a',b'),
$$
yields  $c_1=c_2$. Therefore, if $l_\infty$ is either a $0$-secant
or a $1$-secant of $K$, then every point in $l_\infty\setminus K$ is
a sum-point. Assume then that $l_\infty$ is a $2$-secant, and let
$K\cap l_\infty=\{(0,X_1,X_2),(0,1,f)\}$, where $X_1$ is either $0$ or $1$. Then the point of
coordinates $(0,X_1+1,X_2+f)$ is clearly a sum-point, which proves
the assertion.
\end{proof}

\begin{lemma}
\label{Lem_affin+2} Let $K$ be a complete arc in $PG(2,q)$ such that
$l_\infty$ is a secant of $K$. If the affine part of $K$ is affinely
complete then $\beta (K)=1.$
\end{lemma}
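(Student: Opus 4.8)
The plan is to prove $\beta(K)\le 1$ and then invoke Lemma~\ref{Lem_beta(K)>=1} (which gives $\beta(K)\ge 1$). Write $K\cap l_\infty=\{A_1,A_2\}$ with $A_1=(0,X_1,X_2)$ and $A_2=(0,1,f)$ in normalized form. By the proof of Lemma~\ref{Lem_beta(K)>=1} the point $A_3=(0,X_1+1,X_2+f)$ is a sum-point, and since $A_3\notin K$ it is a genuine candidate. The goal is to show it is the \emph{only} sum-point, and I will do this by examining separately the affine points of $PG(2,q)\setminus K$ and the points of $l_\infty$.

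First I would dispose of the affine candidates. Let $Q=(1,a,b)\notin K$ be affine. Because the affine part of $K$ is affinely complete, $Q$ lies on a secant $l$ joining two affine points $P_1,P_2$ of $K$, each of the form $(1,\cdot,\cdot)$. Comparing first coordinates in $Q=c_1^{(l)}P_1+c_2^{(l)}P_2$ yields $c_1^{(l)}+c_2^{(l)}=1$; since $q$ is even, the equality $c_1^{(l)}=c_2^{(l)}$ would force $0=1$. Hence $c_1^{(l)}\ne c_2^{(l)}$, so $Q$ is not a sum-point. This shows every sum-point of $K$ lies on $l_\infty$.

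Next I would analyze $Q\in l_\infty\setminus\{A_1,A_2\}$. The key observation, already implicit in Lemma~\ref{Lem_beta(K)>=1}, is that any secant $l\ne l_\infty$ through $Q$ meets $l_\infty$ only in $Q$ and therefore joins two affine points of $K$; comparing first coordinates now gives $c_1^{(l)}+c_2^{(l)}=0$, that is $c_1^{(l)}=c_2^{(l)}$ \emph{automatically}. Thus the sum-point condition for $Q$ collapses to the single secant $l_\infty$: writing $Q=c_1A_1+c_2A_2$ along $l_\infty$, the point $Q$ is a sum-point exactly when $c_1=c_2$. Since $A_1,A_2$ are a basis of the underlying $2$-dimensional vector space, the normalized representative of $Q$ determines $(c_1,c_2)$ uniquely, and $c_1=c_2$ holds if and only if $Q$ is the projective point $[A_1+A_2]=A_3$. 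Combining the two cases, $A_3$ is the unique sum-point, so $\beta(K)=1$.

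The one delicate point is precisely the bookkeeping that separates the behaviour on $l_\infty$ from the affine behaviour: the very same first-coordinate relation produces $c_1=c_2$ for free when $Q$ lies on $l_\infty$, yet is incompatible with $c_1=c_2$ when $Q$ is affine. This characteristic-$2$ dichotomy is what lets affine completeness eliminate every affine candidate while leaving exactly one sum-point on the line at infinity; once it is isolated, the rest is routine linear algebra on the line $l_\infty$.
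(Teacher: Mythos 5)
Your proof is correct and follows essentially the same route as the paper's: affine completeness plus the characteristic-$2$ relation $c_1+c_2=1$ rules out affine sum-points, and the unique sum-point $(0,X_1+1,X_2+f)$ is then isolated on $l_\infty$. The only difference is that you spell out the $l_\infty$ analysis (secants other than $l_\infty$ automatically give $c_1=c_2$, so the condition reduces to linear algebra on $l_\infty$) which the paper dismisses as ``straightforward,'' relying on the computation already made in the proof of Lemma~\ref{Lem_beta(K)>=1}.
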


\begin{proof}
We first prove that  any point $P=(1,x,y)$ is not a sum-point for
$K$. Two distinct affine points of $K$, say $(1,a,b)$ and
$(1,a',b')$, are collinear with $P$, that is,
$$(1,x,y)=c_1
(1,a,b)+c_2(1,a',b'),$$ for some  $c_1,c_2\in \fq^\ast$. As $q$ is
even, it is not possible that $c_1=c_2$. Now let $K\cap
l_\infty=\{(0,X_1,X_2),(0,1,f)\}$, with $X_1\in \{0,1\}$. Then it is straightforward that
the point of coordinates $(0,X_1+1,X_2+f)$ is the only sum-point for
$K$ on $l_\infty$.
\end{proof}

\begin{remark}
The converse of Lemma \ref{Lem_affin+2} does not hold, as it can
be shown that there exist arcs $K$ in $PG(2,q)$, $q=8,16$, such
that $\beta (K)=1$ but the secants of the affine part of $K$ do
not cover all the affine points of $PG(2,q)$, see Table 1.
\end{remark}

For a $k$-arc $K$ in $PG(2,q)$, let
\begin{eqnarray*}
Cov_\infty(K)=\{m\mid (0,1,m) \text{ is covered by the secants of }
K\},\qquad\qquad \\
S_\infty(K)=\{X_2+Y_2\mid (X_0,X_1,X_2),(Y_0,Y_1,Y_2)\in K,\,\,
X_0=Y_0,X_1=Y_1,X_2\neq Y_2\}.
\end{eqnarray*}
Also, for any element $m\in \fq$, let
$$
S_m(K)=\left\{X_1+Y_1\mid (X_0,X_1,X_2),(Y_0,Y_1,Y_2)\in K,\,\,
X_0=Y_0,X_1\ne Y_1, \frac{X_2+Y_2}{X_1+Y_1}=m\right\}.
$$
Note that the size of $S_m(K)$ is at most $k/2$, as there are
at most $k/2$ pairs of points of  $K$ collinear with $(0,1,m)$.
The size of $S_\infty(K)$ is at most $k/2$ as well, as
$S_\infty(K)$ corresponds to pair of points of $K$ collinear
with $(0,0,1)$. In particular, when $K$ is complete and $\beta
(K)=1$ and the only sum-point for $K$ is $(0,0,1)$ we
have
\begin{equation*}
1\leq |S_{\infty }(K)|\leq p(K).
\end{equation*}
Note also that $0\notin S_{\infty }(K).$ Similarly, if $\beta (K)=1$
and the only sum-point for $K$ is $(0,1,m)$ we have
\begin{equation*}
1\leq |S_{m}(K)|\leq p(K).
\end{equation*}

As a matter of terminology, we say that a projectivity $\psi$ of
$PG(2,q)$ is {\em integral for }$K$ if it can be represented by a
matrix $A\in GL(3,q)$ with the following property: for each point
$P$ of $K$ written in its normalized form, $A\cdot P$ is the
normalized form for the point $\psi(P)$.
\begin{lemma}
\label{Lem_!!!!} Let $K$ be a complete arc in $PG(2,q)$. Let $\psi$
be any  projectivity  of $PG(2,q)$ which is integral for $K$. Then a
point  $Q\in PG(2,q)\setminus K$ is a sum-point for $K$ if and only
if $\psi(Q)$ is a sum-point for $\psi(K)$. In particular,
$\beta(\psi(K))=\beta(K)$.
%
%
\end{lemma}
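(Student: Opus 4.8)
The plan is to exploit the defining property of an integral projectivity: if $A\in GL(3,q)$ represents $\psi$ and is integral for $K$, then for every $P\in K$ written in normalized form the vector $AP$ is \emph{exactly} the normalized representative of $\psi(P)$, with no stray scalar. Since $\psi$ is a collineation, it carries the secants of $K$ through a point $Q\notin K$ bijectively onto the secants of $\psi(K)$ through $\psi(Q)\notin\psi(K)$; so it suffices to track, secant by secant, what happens to the coefficients $c_1^{(l)},c_2^{(l)}$ under $A$.

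First I would take a sum-point $Q$ for $K$ and a secant $l$ with $l\cap K=\{P_1,P_2\}$ (normalized), so that $Q=c_1 P_1+c_2 P_2$ with $c_1=c_2$ because $Q$ is a sum-point. Applying $A$ and invoking integrality gives
\[
AQ=c_1\,(AP_1)+c_2\,(AP_2)=c_1\,\psi(P_1)+c_2\,\psi(P_2),
\]
where $\psi(P_1),\psi(P_2)$ are already normalized. The one thing that is \emph{not} automatic is that $AQ$ be normalized, since $Q\notin K$: write $AQ=\lambda\,\widehat{\psi(Q)}$, where $\widehat{\psi(Q)}$ denotes the normalized form of $\psi(Q)$ and $\lambda\in\fq^\ast$. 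Dividing by $\lambda$ expresses $\widehat{\psi(Q)}$ as $(c_1/\lambda)\psi(P_1)+(c_2/\lambda)\psi(P_2)$, i.e. the coefficients of $\psi(Q)$ along the secant $\psi(l)$ are $c_1/\lambda$ and $c_2/\lambda$. The crux, and the only genuine obstacle, is exactly this unknown scalar $\lambda$; but since it multiplies both coefficients equally, the hypothesis $c_1=c_2$ forces $c_1/\lambda=c_2/\lambda$. As every secant of $\psi(K)$ through $\psi(Q)$ is of the form $\psi(l)$ for a secant $l$ of $K$ through $Q$, this shows $\psi(Q)$ is a sum-point for $\psi(K)$.

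For the converse I would observe that $A^{-1}$ represents $\psi^{-1}$ and is integral for $\psi(K)$: a generic normalized point of $\psi(K)$ is $\psi(P)=AP$ with $P\in K$, and $A^{-1}(AP)=P$ is its normalized $\psi^{-1}$-image because $P\in K$ is itself normalized. Applying the direction just proved to the pair $(\psi^{-1},\psi(K))$ then yields that $Q$ is a sum-point for $K$ whenever $\psi(Q)$ is one for $\psi(K)$, establishing the equivalence. Finally, since $\psi$ restricts to a bijection $PG(2,q)\setminus K\to PG(2,q)\setminus\psi(K)$ that carries sum-points to sum-points in both directions, the two sets of sum-points have equal cardinality, whence $\beta(\psi(K))=\beta(K)$.
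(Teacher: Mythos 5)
Your proof is correct and takes essentially the same approach as the paper's: apply the representing matrix $A$ to the relation $Q=c_1P_1+c_2P_2$ along each secant, use integrality to keep the images of points of $K$ normalized, and observe that equality (or inequality) of the two coefficients survives because any rescaling needed to normalize $A\cdot Q$ multiplies both coefficients by the same nonzero scalar. In fact your write-up is somewhat more careful than the paper's, which argues contrapositively, does not explicitly address the normalization scalar $\lambda$, and dismisses the converse as provable ``in a similar way'' rather than deducing it, as you do, from the integrality of $A^{-1}$ for $\psi(K)$.
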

\begin{proof}
Assume that $Q$ is not a sum-point for $K$. Then there exists
$c_1\neq c_2$ such that
$$
Q=c_1P_1+c_2P_2,
$$
where $P_1$ and $P_2$ are points of $K$ written in their normalized
form. Let $A$ be a matrix representing $\psi$ and such that for each
point $P$ of $K$ written in its normalized form, $A\cdot P$ is the
normalized form for  $\psi(P)$. Then
$$
\psi(Q)=A\cdot Q=c_1(A\cdot P_1)+c_2 (A\cdot P_2),
$$
whence $\psi(Q)$ is not a sum-point for $K$. The converse can be
proved in a similar way.
\end{proof}
\begin{remark}
Let $K$ be a complete arc such that  $\beta(K)=1$. From the proof of
Lemma \ref{Lem_!!!!} it follows that for any projectivity $\psi$
which is integral for $K$, the value of $p(\psi(K))$  coincides with
$p(K)$.
\end{remark}
\begin{lemma}
\label{Lem_001} For every complete arc $K$ in $PG(2,q)$ with $\beta
(K)=1$ there is a projectivity  $\psi$  such that $\beta
(\psi(K))=1$,   $p(\psi(K))=p(K)$, and the only sum-point for
$\psi(K)$ is $(0,0,1).$
\end{lemma}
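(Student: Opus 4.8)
The plan is to produce the required projectivity $\psi$ explicitly as the map that sends the unique sum-point of $K$ to the fixed point $(0,0,1)$, while arranging that $\psi$ is \emph{integral} for $K$ so that Lemma~\ref{Lem_!!!!} and the subsequent Remark apply. Let $Q_0$ be the only sum-point for $K$, written in normalized form. First I would find a projectivity carrying $Q_0$ to $(0,0,1)$; the freedom in choosing such a map is large, and the real content is to select a representing matrix $A\in GL(3,q)$ that respects the normalization convention on the points of $K$ (first nonzero coordinate equal to $1$), so that $\psi$ is integral for $K$.

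The key observation is that integrality is a condition that can always be met after rescaling. Concretely, I would first pick \emph{any} matrix $B\in GL(3,q)$ representing a projectivity with $\psi(Q_0)=(0,0,1)$. For each point $P$ of $K$ in normalized form, $B\cdot P$ need not be normalized, but it is a nonzero scalar multiple $\lambda_P$ of the normalized form of $\psi(P)$. The subtlety is that one cannot in general rescale the single matrix $B$ by one global constant to fix all the $\lambda_P$ simultaneously. The standard device, and the step I expect to be the crux, is instead to choose the three homogeneous coordinates of the image points carefully, using the extra projective freedom: a projectivity of $PG(2,q)$ is determined by the images of four points in general position, and one may prescribe not just the image points but also representative scalings on a frame. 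I would therefore fix a projective frame adapted to $K$ and $Q_0$, send it to a frame adapted to $(0,0,1)$, and then observe that integrality for the four frame points forces $A\cdot P$ to be normalized for \emph{every} $P\in K$, because the transition between normalized representatives is linear and the affine-type relations $(1,a,b)\mapsto(1,\ast,\ast)$ are preserved whenever the map fixes the line $X_0=0$ setwise with the correct scaling. Keeping $\psi$ fixing $l_\infty$ (equivalently, keeping the first coordinate of $A\cdot P$ equal to its first coordinate for points off $l_\infty$) is what guarantees that the leading $1$ of each normalized point is transported to a leading $1$.

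Once such an integral $A$ is in hand, the conclusion is immediate from the results already proved. By Lemma~\ref{Lem_!!!!}, $\psi$ carries sum-points of $K$ bijectively to sum-points of $\psi(K)$, so $\beta(\psi(K))=\beta(K)=1$ and the unique sum-point of $\psi(K)$ is $\psi(Q_0)=(0,0,1)$. By the Remark following Lemma~\ref{Lem_!!!!}, the number of secants through the sum-point is preserved, so $p(\psi(K))=p(K)$. This gives all three assertions of the statement.

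The main obstacle is purely the integrality bookkeeping: ordinary projective equivalence is trivial to arrange, but the paper's constructions depend on the normalized affine form of points, so I must verify that the representing matrix can be chosen so that $A\cdot P$ lands exactly on the normalized representative, not merely a projective multiple, of $\psi(P)$ for each $P\in K$. I would handle this by explicitly taking $A$ to fix $l_\infty$ and to act as the identity on the first coordinate (so that affine points map to affine points with leading coordinate $1$ automatically, and points of $K\cap l_\infty$ of the form $(0,1,\ast)$ or $(0,0,1)$ are sent to points whose normalization is controlled). The one case needing care is the image of $Q_0$ itself: since $Q_0=(0,0,1)$ after the map and $(0,0,1)$ is already in normalized form, no obstruction arises there, and the construction closes.
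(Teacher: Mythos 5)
Your overall strategy---construct a projectivity that is integral for $K$, fixes the first coordinate, carries the unique sum-point to $(0,0,1)$, and then invoke Lemma~\ref{Lem_!!!!} and the subsequent Remark---is the same as the paper's, but your proposal leaves the actual crux unproved, and the gap is genuine. You never locate the unique sum-point $Q_0$. Every map you consider (fixing $l_\infty$ setwise, acting as the identity on the first coordinate) sends affine points to affine points, so it can carry $Q_0$ to $(0,0,1)\in l_\infty$ only if $Q_0$ already lies on $l_\infty$; if $Q_0$ were affine your plan would be vacuous. What must be established first---and what the paper's proof rests on---is that $\beta(K)=1$ forces $l_\infty$ to be a $2$-secant of $K$ (otherwise, by the proof of Lemma~\ref{Lem_beta(K)>=1}, every point of $l_\infty\setminus K$ would be a sum-point, giving $\beta(K)\geq q$), and that the unique sum-point is then precisely $P_1+P_2$, the sum of the \emph{normalized} representatives of the two points $P_1,P_2\in K\cap l_\infty$. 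This identification is not a convenience; it is the only reason your two requirements are simultaneously satisfiable.

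Indeed, your claim that integrality ``can always be met after rescaling,'' or that integrality at four frame points propagates to all of $K$, is false as stated: prescribing representatives on a frame controls nothing at the other points of $K$. The correct mechanism is the one you gesture at in your last paragraph: if the matrix $A$ has first row $(1,0,0)$, then every affine point of $K$ automatically maps to a normalized vector, and integrality reduces to the two points $P_1,P_2\in K\cap l_\infty$. But now do the computation you skipped. Granting $Q_0\in l_\infty$, write $Q_0=\alpha P_1+\beta P_2$; integrality forces $A\cdot P_i=(0,1,a_i)$ with $a_1\neq a_2$ (the value $(0,0,1)$ is excluded since $A\cdot Q_0\sim(0,0,1)$ and $A$ is injective), whence $A\cdot Q_0=(0,\alpha+\beta,\alpha a_1+\beta a_2)$. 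This is proportional to $(0,0,1)$ if and only if $\alpha=\beta$, i.e.\ if and only if $Q_0\sim P_1+P_2$. So without the structural fact above your ``bookkeeping'' cannot close, while with it the lemma is immediate: if $(0,0,1)\in K\cap l_\infty=\{(0,0,1),(0,1,g)\}$, then $Q_0=(0,1,1+g)$ and the explicit integral map $\psi(x,y,z)=(x,(g+1)y+z,gy+z)$ of the paper works, and in the complementary case $K\cap l_\infty=\{(0,1,f),(0,1,g)\}$ the sum-point is already $(0,0,f+g)\sim(0,0,1)$ and the identity suffices.
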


\begin{proof}
As $\beta (K)=1,$ the line $l_\infty$ is a secant of $K$. Let
$K\cap l_\infty=\{(0,X_{1},f),(0,1,g)\}$. If $X_{1}=1$, the
lemma is proved by taking $\psi$ as the identical projectivity.
Assume then that $X_{1}=0.$ Then $f=1.$ Let $\psi
(x,y,z)=(x,y(g+1)+z,$ $z+gy)$. Clearly $\psi$ is integral for
$K$. Also, $\psi (K)\cap l_\infty=\{(0,1,1),(0,1,0)\}$, whence
$(0,0,1)$ is a sum-point for $\psi(K)$. By Lemma
\ref{Lem_!!!!}, the assertion is proved.
\end{proof}

\begin{lemma}
\label{Lem_011}For every complete arc $K$ in $PG(2,q)$ with $\beta
(K)=1$ there exists a projectivity $\psi$ such that
$\beta(\psi(K))=1$, $\psi(K)\cap l_\infty=\{(0,0,1),(0,1,0)\}$,
$p(\psi(K))=p(K)$, and the only sum-point for $\psi(K)$ is
$(0,1,1).$
\end{lemma}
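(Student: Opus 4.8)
The plan is to combine the previous two lemmas. By Lemma~\ref{Lem_001}, since $\beta(K)=1$, there is a projectivity $\psi_1$ that is integral for $K$ with $\beta(\psi_1(K))=1$, $p(\psi_1(K))=p(K)$, and whose only sum-point is $(0,0,1)$; in particular $\psi_1(K)\cap l_\infty=\{(0,1,1),(0,1,0)\}$ as produced in that proof. So it suffices to find a further integral projectivity that moves this configuration to the desired one, with sum-point $(0,1,1)$ and $\psi(K)\cap l_\infty=\{(0,0,1),(0,1,0)\}$, and then compose.

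First I would set $K':=\psi_1(K)$, so that $(0,0,1)$ is the only sum-point for $K'$ and $K'\cap l_\infty=\{(0,1,1),(0,1,0)\}$. I then want a linear map $A\in GL(3,q)$ fixing the plane $X_0=0$ (so that $l_\infty$ is preserved and the map acts on the last two coordinates) that sends the sum-point $(0,0,1)$ to $(0,1,1)$ and sends the secant points $\{(0,1,1),(0,1,0)\}$ to $\{(0,0,1),(0,1,0)\}$. On the affine-invariant part I can take $A$ to be the identity in the $X_0$-coordinate; the action on $(X_1,X_2)$ should map $(0,1)\mapsto(1,1)$, $(1,1)\mapsto(0,1)$ and $(1,0)\mapsto(1,0)$. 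A natural candidate is the involution $\varphi(X_1,X_2)=(X_1+X_2,X_2)$ (using that $q$ is even): it fixes $(1,0)$, swaps $(0,1)\leftrightarrow(1,1)$, and squares to the identity. I would verify that $A=\mathrm{diag}$-type matrix $\psi_2(x,y,z)=(x,\,y+z,\,z)$ indeed does this, and that it is integral for $K'$ (the first nonzero coordinate of each normalized point is preserved because the $X_0$-row is unchanged and affine points stay affine).

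The key checks are then: (i) $\psi_2$ is integral for $K'$, so that Lemma~\ref{Lem_!!!!} applies and gives $\beta(\psi_2(K'))=\beta(K')=1$ together with the fact that $(0,1,1)=\psi_2((0,0,1))$ is the unique sum-point; (ii) $p(\psi_2(K'))=p(K')=p(K)$, which follows from the remark after Lemma~\ref{Lem_!!!!} applied to the integral projectivity $\psi_2$; and (iii) $\psi_2(K')\cap l_\infty=\{(0,0,1),(0,1,0)\}$, which is the image of $\{(0,1,1),(0,1,0)\}$ under $\varphi$. Finally I set $\psi:=\psi_2\circ\psi_1$ and combine, noting $p(\psi(K))=p(\psi_2(\psi_1(K)))=p(\psi_1(K))=p(K)$.

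The main obstacle is purely bookkeeping rather than conceptual: one must be careful that the explicitly chosen matrix $A$ is \emph{integral} for the arc (the normalization condition on every point of $K'$, not just on the two points of $K'\cap l_\infty$), since this is exactly the hypothesis needed to invoke Lemma~\ref{Lem_!!!!}. Because $A$ fixes the $X_0$-coordinate and acts invertibly and affinely on the remaining coordinates, affine points map to affine points with $X_0=1$ preserved, and points on $l_\infty$ map to points on $l_\infty$ whose normalized form is easily read off; so integrality holds. Once integrality is confirmed, everything else is a direct application of the already-established lemmas, so there is no genuine difficulty beyond choosing the correct $2\times 2$ action on the last two coordinates.
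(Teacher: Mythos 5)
Your overall strategy (compose the projectivity from Lemma~\ref{Lem_001} with a second integral projectivity preserving $l_\infty$) is reasonable, and your handling of integrality and of the preservation of $p(K)$ under composition of integral projectivities is correct. But the proof breaks at the first step: you claim that the projectivity $\psi_1$ of Lemma~\ref{Lem_001} yields $\psi_1(K)\cap l_\infty=\{(0,1,1),(0,1,0)\}$ ``as produced in that proof''. This is neither what Lemma~\ref{Lem_001} asserts nor what its proof produces. The statement of Lemma~\ref{Lem_001} only guarantees that the unique sum-point of $\psi_1(K)$ is $(0,0,1)$; from this one can deduce only that $\psi_1(K)\cap l_\infty=\{(0,1,f),(0,1,g)\}$ for some distinct $f,g\in\fq$ (the two points must have the same $X_1$-coordinate, since otherwise their sum would normalize to a point $(0,1,\cdot)\neq(0,0,1)$). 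Moreover, in the proof of Lemma~\ref{Lem_001}, in the case where $K\cap l_\infty$ is already of the form $\{(0,1,f),(0,1,g)\}$, the projectivity chosen is the \emph{identity}, so the intersection remains $\{(0,1,f),(0,1,g)\}$ with $f,g$ arbitrary; only in the other case ($X_1=0$) does that proof output the special pair $\{(0,1,1),(0,1,0)\}$.

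Consequently your explicit second map $\psi_2(x,y,z)=(x,y+z,z)$ does not work in general: it sends $(0,1,f)\mapsto(0,1+f,f)$ and $(0,1,g)\mapsto(0,1+g,g)$, which equals $\{(0,0,1),(0,1,0)\}$ only when $\{f,g\}=\{0,1\}$. The repair is to let $\psi_2$ depend on $f$ and $g$: for instance $\psi_2(x,y,z)=\left(x,\frac{f}{f+g}y+\frac{1}{f+g}z,\frac{g}{f+g}y+\frac{1}{f+g}z\right)$ sends $(0,1,f)\mapsto(0,0,1)$, $(0,1,g)\mapsto(0,1,0)$, and $(0,0,1)\mapsto(0,1,1)$ after normalization, is integral for $\psi_1(K)$, and then your composition argument, via Lemma~\ref{Lem_!!!!}, goes through. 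This repaired map is in fact essentially the paper's proof, which skips Lemma~\ref{Lem_001} altogether and performs the same two-case analysis directly on $K\cap l_\infty=\{(0,X_1,f),(0,1,g)\}$: for $X_1=0$ (hence $f=1$) it uses $\psi(x,y,z)=(x,y,z+gy)$, and for $X_1=1$ it uses the map displayed above.
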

\begin{proof}
As $\beta (K)=1,$ the line $l_\infty$ is a secant of $K$. Let
$K\cap l_\infty=\{(0,X_{1},f),(0,1,g)\}$. If $X_{1}=0$ then
$f=1$, whence the assertion holds for $\psi
(x,y,z)=(x,y,z+gy)$. Assume then that $X_{1}=1.$ Then let
$\psi(x,y,z)=(x,
\frac{f}{f+g}y+\frac{1}{f+g}z,\frac{g}{f+g}y+\frac{1}{f+g}z).$
By Lemma \ref{Lem_!!!!}, the claim follows.
\end{proof}

\begin{lemma}
\label{Lem_Sinfnot1} For every complete arc $K$ in $PG(2,q)$ with
$\beta (K)=1$  there is a projectivity $\psi$  such that $\beta
(\psi(K))=1$,   $p(\psi(K))=p(K)$,  the only sum-point for $\psi(K)$
is $(0,0,1)$, and $1\notin S_{\infty }(\psi(K))$.
\end{lemma}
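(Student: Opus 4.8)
The plan is to first normalize the situation and then remove the element $1$ from $S_\infty$ by a single diagonal rescaling of the last coordinate. I would begin by invoking Lemma~\ref{Lem_001} to replace $K$ by a projectively equivalent arc (via an integral projectivity, which by Lemma~\ref{Lem_!!!!} and the remark following it preserves $\beta=1$ and $p$) whose unique sum-point is $(0,0,1)$. Since a composition of integral projectivities is again integral, it then suffices to exhibit a further integral projectivity $\psi_\lambda$ fixing $(0,0,1)$, keeping it the unique sum-point with the same value of $p$, and achieving $1\notin S_\infty(\psi_\lambda(K))$; the required $\psi$ is the composite of this map with the one from Lemma~\ref{Lem_001}.

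The natural candidate is $\psi_\lambda(x,y,z)=(x,y,\lambda z)$ with $\lambda\in\fq^\ast$, represented by $A=\mathrm{diag}(1,1,\lambda)$. I would first verify that $\psi_\lambda$ is integral for $K$: every point of $K$ is in normalized form $(1,X_1,X_2)$ or $(0,1,X_2)$ (the point $(0,0,1)$ being external to $K$, as sum-points lie in $PG(2,q)\setminus K$), and $A$ sends these to $(1,X_1,\lambda X_2)$ and $(0,1,\lambda X_2)$, which are themselves normalized. Hence Lemma~\ref{Lem_!!!!} and the subsequent remark apply, and since $\psi_\lambda(0,0,1)=(0,0,1)$, the arc $\psi_\lambda(K)$ retains $(0,0,1)$ as its only sum-point with $p(\psi_\lambda(K))=p(K)$.

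The key computation is the effect on $S_\infty$. A pair of points of $K$ sharing their first two coordinates and differing in the third maps under $\psi_\lambda$ to a pair again of this shape, with contribution $X_2+Y_2$ replaced by $\lambda(X_2+Y_2)$; therefore $S_\infty(\psi_\lambda(K))=\lambda\,S_\infty(K)$. Consequently $1\notin S_\infty(\psi_\lambda(K))$ holds precisely when $\lambda^{-1}\notin S_\infty(K)$, that is, when $\lambda$ avoids the set $\{s^{-1}:s\in S_\infty(K)\}$. Because $0\notin S_\infty(K)$ these inverses all exist, and this forbidden set has cardinality $|S_\infty(K)|$.

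It remains to guarantee that an admissible $\lambda$ exists, i.e. that $|S_\infty(K)|<q-1$, and this counting step is the only genuine obstacle. I would bound $|S_\infty(K)|\le p(K)$ (already established in the text preceding the lemma) and then $p(K)\le k/2\le (q+2)/2$, since each of the $p(K)$ secants through the external point $(0,0,1)$ carries two distinct points of $K$. For the relevant $q$ this yields $|S_\infty(K)|\le (q+2)/2<q-1$, so that $\fq^\ast\setminus\{s^{-1}:s\in S_\infty(K)\}$ is nonempty and a valid $\lambda$ may be chosen. Everything else is routine: the verification of integrality and the one-line transformation rule $S_\infty(\psi_\lambda(K))=\lambda\,S_\infty(K)$.
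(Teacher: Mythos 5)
Your proof is correct and follows essentially the same route as the paper: after normalizing via Lemma~\ref{Lem_001} so that $(0,0,1)$ is the unique sum-point, you rescale the last coordinate by a scalar chosen outside a forbidden set of size at most $k/2$, which is exactly the paper's choice of $\psi(X_0,X_1,X_2)=(X_0,X_1,X_2/w)$ with $w\notin S_\infty(K)$ (your $\lambda$ is the paper's $1/w$). The only difference is cosmetic: you spell out the integrality check and the counting bound $|S_\infty(K)|\le p(K)\le k/2\le (q+2)/2<q-1$, which the paper leaves implicit.
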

\begin{proof}
By Lemma \ref{Lem_001} we can assume that the only sum-point
for $K$ is $(0,0,1)$. Then $K\cap l_\infty=\{(0,1,f),(0,1,g)\}$
for some $f,g\in \fq$. For a pair $ P_1=(X_{0},X_{1},X_{2}),$
$P_2=(Y_{0},Y_{1},Y_{2})$ of points of $K$ collinear with
$(0,0,1),$ let $\Delta _{P_1,P_2}=X_{2}+Y_{2}$. As there are at
most $k/2$ distinct values of $\Delta _{P_1,P_2},$ there exists
an element $w\in \fq^{\ast }$ such that
\begin{equation*}
w\notin \{\Delta _{P_1,P_2}|P_1,P_2\text{ collinear with }(0,0,1)\}.
\end{equation*}
Let $\psi (X_{0},X_{1},X_{2})=(X_{0},X_{1},\frac{X_{2}}{w})$.
Then $\psi (K)\cap
l_\infty=\{(0,1,\frac{f}{w}),(0,1,\frac{g}{w})\}$. Note that
$\Delta _{\psi (P_1),\psi (P_2)}=(1/w)\Delta_{P_1,P_2}\neq 1,$
for every pair $\psi(P_1),\psi(P_2)$ of points collinear with
$(0,0,1)$, whence $1\notin S_{\infty }(\psi(K)).$ Also, the
point $(0,0,1)$ is a sum-point for $\psi(K)$. Finally, Lemma
\ref{Lem_!!!!} ensures that $\beta (\psi(K))=\beta (K)=1$.
\end{proof}

\begin{lemma}
\label{Lem_forLaScala}In $PG(2,q)$ for every complete $k$-arc
$K$ with $\beta (K)=1$ and\begin{equation} (k-2)p(K)<q-1
\label{form_(k-2)p(K)}
\end{equation}there exists a collineation $\psi$ such that $\psi(K)\cap
l_\infty=\{(0,0,1),(0,1,0)\}$,  $\beta (\psi(K))=1$,
$p(\psi(K))=p(K)$,  the only sum-point for $\psi(K)$ is $(0,1,1),$
and with the property that
\begin{equation}
\psi(K)\cap \{(1,a ,Aa ^{2})|A\in S_{1}(\psi(K)),a \in \fq\}
=\emptyset . \label{form_forLaSc1}
\end{equation}
\end{lemma}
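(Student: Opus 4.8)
The plan is to build the desired collineation $\psi$ in two stages. First, by Lemma \ref{Lem_011} I may assume from the outset that $K$ itself already satisfies $K\cap l_\infty=\{(0,0,1),(0,1,0)\}$, $\beta(K)=1$, $p(K)=p(K)$, and that the only sum-point is $(0,1,1)$. The task then reduces to finding a projectivity $\psi$ that preserves all of these normalized features while additionally forcing the avoidance condition (\ref{form_forLaSc1}). The natural candidate is a map that fixes $l_\infty$ pointwise on the two relevant points and acts on the affine plane by a shift, since such maps are visibly integral for $K$ and fix the sum-point, so that by Lemma \ref{Lem_!!!!} the equalities $\beta(\psi(K))=1$ and $p(\psi(K))=p(K)$ come for free.

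Concretely, I would try a translation-type collineation of the form $\psi(X_0,X_1,X_2)=(X_0, X_1+sX_0, X_2+tX_0)$ for suitable $s,t\in\fq$, which fixes every point of $l_\infty$ and in particular the sum-point $(0,1,1)$ and the two arc points on $l_\infty$; its matrix is integral for $K$ because it sends each normalized affine point $(1,a,b)$ to the normalized point $(1,a+s,b+t)$. The key observation is that such a shift does not change $S_1(\psi(K))$: the quantity $X_1+Y_1$ computed for a pair of affine points collinear with $(0,1,1)$ is invariant under adding $(s,t)$ to both coordinate pairs, so $S_1(\psi(K))=S_1(K)$. Hence condition (\ref{form_forLaSc1}) for $\psi(K)$ becomes the requirement that for every $A\in S_1(K)$, every $a\in\fq$, and every affine point $(1,a',b')\in K$, one does not have $a'+s=a$ and $b'+t=A(a+s)^2$ simultaneously — equivalently, that the shifted set of ``parabola points'' $\{(1,a,Aa^2):A\in S_1(K),a\in\fq\}$ misses all $k$ affine points of $K$.

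The main work is a counting argument showing that a good $(s,t)$ exists. For a fixed arc point $(1,a',b')$ and fixed $A$, the bad pair $(s,t)$ is determined up to the free choice of $a$: indeed $s=a-a'$ may be arbitrary, and then $t=A(a+s)^2+b'=Aa'^{\,2}+b'$ is forced, giving exactly $q$ bad pairs $(s,t)$ per arc point per value of $A$ — but all sharing the same $t=Aa'^{\,2}+b'$. So for each arc point and each $A$ there is a single forbidden value of $t$ (namely $Aa'^{\,2}+b'$), ruling out $s$ arbitrary; thus the number of forbidden values of $t$ is at most $(k-2)\,|S_1(K)|$, where I subtract the two points of $K$ on $l_\infty$. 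Since $|S_1(K)|\le p(K)$, the number of forbidden $t$ is at most $(k-2)p(K)$, and hypothesis (\ref{form_(k-2)p(K)}) gives $(k-2)p(K)<q-1<q$, so a permissible value of $t$ remains; I then pick any $s$ (say $s=0$) and set $\psi(X_0,X_1,X_2)=(X_0,X_1,X_2+tX_0)$.

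The step I expect to require the most care is the precise bookkeeping of which $(s,t)$ are excluded: I must verify that for each affine arc point the set of bad pairs collapses to a single forbidden coordinate (here $t$), rather than genuinely depending on two parameters, so that the total count is $(k-2)p(K)$ and not a quantity of order $q\cdot(k-2)p(K)$ that the hypothesis could not control. This hinges on the fact that the avoidance locus $\{(1,a,Aa^2)\}$ is itself translation-invariant in the $a$-direction up to adjusting $t$, which is exactly why a pure vertical shift $X_2\mapsto X_2+tX_0$ suffices and why only $t$ needs to be chosen. Once the single forbidden value of $t$ per (point, $A$) pair is established, the union bound against the hypothesis (\ref{form_(k-2)p(K)}) closes the argument, and Lemma \ref{Lem_!!!!} guarantees that all the previously arranged properties of $K$ survive under $\psi$.
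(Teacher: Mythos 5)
Your proof is correct and follows essentially the same route as the paper's: there too, $K$ is first normalized via Lemma \ref{Lem_011}, and then a good member of a one-parameter family of shifts that are integral for $K$, fix $l_\infty$ pointwise, and leave $S_1$ unchanged is selected by the same union bound against (\ref{form_(k-2)p(K)}) --- the only difference being that the paper shifts the $X_1$-coordinate, via $\phi_w(x,y,z)=(x,wx+y,z)$ (bad values being those $w$ with $w^2=\frac{1}{A}(c+Ab^2)$), whereas you shift $X_2$.

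One algebraic slip in your bookkeeping should be repaired, although it does not damage the argument: for a fixed affine point $(1,a',b')\in K$ and fixed $A\in S_1(K)$, matching $(1,a'+s,b'+t)$ with $(1,a,Aa^2)$ forces $a=a'+s$ and $t=A(a'+s)^2+b'$ (you wrote $b'+t=A(a+s)^2$ where it should be $b'+t=Aa^2$), so the bad pairs $(s,t)$ do \emph{not} all share the value $t=Aa'^{\,2}+b'$ --- their $t$-coordinates sweep all of $\fq$ as $s$ varies; your conclusion survives because you fix $s=0$ at the end, and for $s=0$ the forbidden values of $t$ are exactly the at most $(k-2)p(K)<q-1<q$ elements $A(a')^2+b'$, so a good $t$ exists.
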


\begin{proof}
By Lemma \ref{Lem_011}, we can assume that $K\cap
l_\infty=\{(0,0,1),(0,1,0)\}$ and $\beta (K)=1,$  the only sum-point
for $K$ being $(0,1,1).$ Let $K_{w}=\phi _{w}(K)$ where $\phi
_{w}(x,y,z)=(x,wx+y,z),$ $w\in \fq^{\ast }.$ Note that $\beta
(K_{w})=\beta (K)=1$ by Lemma \ref{Lem_!!!!}. As $\phi_{w}(K)\cap
l_\infty=\{(0,0,1),(0,1,0)\}$, it follows that
 the only sum-point for $K_{w}$ is $(0,1,1).$ Also, it is straightforward that
$S_{1}(K_{w})=S_{1}(K)$.

Now let $R=\{(1,a ,Aa ^{2})|A\in S_{1}(K),a \in \fq\}.$ Note that
for any affine point $P=(1,b,c)$ in $PG(2,q)$, the point $\phi_w(P)$
belongs to $R$ if and only if $w^2=\frac{1}{A}(c+Ab^2)$. When $P$
ranges over the affine points of $K$, and $A$ over the set
$S_{1}(K)$, the number of values $\frac{1}{A}(c+Ab^2)$ is at most
$(k-2)|S_{1}(K)|$. This proves that
 $K_{w}\cap R \neq \emptyset$ for at most
 $(k-2)|S_{1}(K)|$ values of $w$.   As
$|S_{1}(K)|\leq p(K)$, from (\ref{form_(k-2)p(K)}) it follows that
there exists $w_0\in \fq^\ast$ such that  $K_{w_0}\cap R=\emptyset$.
Then the assertion follows for $\psi=\phi_{w_0}$.
\end{proof}

\begin{lemma}
\label{Lem_q/2+2} For any even $q\geq 32,$ in $PG(2,q)$ there
exists a complete $k$-arc $K$ with $k\leq (q+4)/2,$ $\beta (K)=1$,
$p(K)=1.$
\end{lemma}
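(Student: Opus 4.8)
The plan is to exhibit a suitable complete arc coming from the construction of \cite{Gi} and then to read off the values of $\beta$ and $p$ by means of Lemma \ref{Lem_affin+2}. I would fix coordinates so that the affine part of the arc lies on the conic $\mathcal{C}=\{(1,t,t^{2})\mid t\in\fq\}$, whose only point on $l_\infty$ is $(0,0,1)$, and I would use an arc of the shape
\[
K=\{(1,t,t^{2})\mid t\in A\}\cup\{(0,1,f),(0,1,g)\},\qquad A\subseteq\fq,\ |A|\le q/2,\ f\ne g,
\]
where $A,f,g$ are the data furnished by \cite{Gi}. The properties I need from that construction are: (i) $K$ is a complete arc, which in particular forces $f,g\notin\{t+s\mid t,s\in A,\ t\ne s\}$ so that no three points of $K$ are collinear; and (ii) the chords of the affine part $\{(1,t,t^{2})\mid t\in A\}$ already cover every affine point of $PG(2,q)$, i.e.\ the affine part is affinely complete, the two infinite points being added only to cover the two leftover points of $l_\infty$. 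Since $|A|\le q/2$, the size of $K$ is $k=|A|+2\le(q+4)/2$.

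Granting such a $K$, the bookkeeping is short. The line $l_\infty$ is a $2$-secant of $K$ with $K\cap l_\infty=\{(0,1,f),(0,1,g)\}$, and by (ii) the affine part is affinely complete; hence Lemma \ref{Lem_affin+2} applies and yields $\beta(K)=1$, the unique sum-point being the point $(0,0,f+g)=(0,0,1)$ determined by the two points of $K\cap l_\infty$. To compute $p(K)$ I would classify the lines through the sum-point $(0,0,1)$: each is either $l_\infty$ or an affine ``vertical'' line $X_1=cX_0$. Such a vertical line meets $\mathcal{C}$ in a single point and so is never a $2$-secant of the affine part, and $K$ has no affine point off $\mathcal{C}$; consequently $l_\infty$ is the only secant of $K$ through $(0,0,1)$, so $p(K)=1$. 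Thus the distinct-abscissae feature of points on a conic is exactly what forces $p(K)=1$ with no further work.

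The one substantial point is therefore (i)--(ii): producing a set $A$ of size at most $q/2$ whose conic chords cover all affine points while leaving precisely two points of $l_\infty$ uncovered. This covering estimate is the heart of the construction in \cite{Gi}, and it is the step I expect to be the main obstacle. I would expect it to require a careful choice of $A$ (for instance via a trace or subgroup condition on $t$) together with control of which slopes $t+s$ arise, typically through a character-sum or direct counting argument valid for all even $q\ge 32$. Once the covering property is secured, $K$ automatically satisfies the hypotheses of Lemma \ref{Lem_affin+2}, and the remaining equalities $\beta(K)=1$ and $p(K)=1$ follow as above; if convenient one may first invoke Lemma \ref{Lem_001} to normalize the sum-point to $(0,0,1)$, but in the coordinates chosen here this is already the case.
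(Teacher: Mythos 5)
Your proposal is correct, and its skeleton is the same as the paper's: quote the construction of \cite{Gi} (an affinely complete $\frac{q}{\alpha}$-arc $K^{A}$, $\alpha\geq 2$, completed by two points of $l_\infty$, whence $k\leq \frac{q}{2}+2=(q+4)/2$), then apply Lemma \ref{Lem_affin+2} to obtain $\beta(K)=1$ with sum-point $(0,1,f)+(0,1,g)=(0,0,1)$. Where you genuinely diverge is the last step, $p(K)=1$. The paper never uses the shape of the \cite{Gi} arcs: it only uses the covering count that \cite{Gi} supplies (the secants of $K^{A}$ cover exactly $\frac{q}{\alpha}-1$ points of $l_\infty$), so that after a normalization the three points $(0,1,f)$, $(0,1,g)$, $(0,0,1)$ may all be assumed uncovered by $K^{A}$; then no secant of $K^{A}$ passes through the sum-point, and $l_\infty$ is the only secant of $K$ through it. You instead deduce that no affine secant passes through $(0,0,1)$ from the geometry of the conic: lines through $(0,0,1)$ other than $l_\infty$ are vertical and meet $\{(1,t,t^{2})\}$ in one point. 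This is valid and even self-contained — it proves, rather than cites, that $(0,0,1)$ is uncovered — but it rests on the extra structural assumption that the \cite{Gi} affinely complete arcs lie on the conic. That assumption happens to hold (they are images of additive subgroups on the parabola), so your proof goes through; still, it is a stronger hypothesis on the cited construction than the paper requires, and the paper's counting argument would survive even for an affinely complete arc not contained in a conic. Two harmless slips in your write-up: the affine chords do not leave ``precisely two'' points of $l_\infty$ uncovered (they leave roughly $q-\frac{q}{\alpha}+2$ of them, and your own vertical-line observation shows $(0,0,1)$ is always among them), and the added points complete the arc not by covering the leftover points of $l_\infty$ one by one, but because once both are added $l_\infty$ itself becomes a secant of $K$ and covers the whole line.
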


\begin{proof}
By \cite[Proposition 3.2, Lemmas 4.1-4.3]{Gi}, for any even
$q\geq 32,$ in $PG(2,q)$ there exists a complete $(\frac{q}{\alpha
}+2)$-arc $K$, $\alpha \geq 2$ integer, which is
obtained from an affinely complete $\frac{q}{\alpha }$-arc
$K^{A}$ by adding two points lying on the line $l_{\infty }.$
Moreover, the number of points on $\ell _{\infty }$ covered by
$K^{A}$ is $\frac{q}{\alpha }-1$. This means that there are at
least $3$ points on $\ell _{\infty }$ uncovered by $K^{A}$.
Such points can be assumed to be $(0,1,f)$, $(0,1,g),$ and
$(0,0,1)$. Then it is straightforward that $K$ is a complete
arc with the only sum-point $(0,0,1)$ (see also the proof of Lemma
\ref{Lem_affin+2}).  As $K^{A}$ does not cover
the sum-point $(0,0,1)$, $p(K)=1$ holds.
\end{proof}

\begin{lemma}
\label{Lem_Abatangelo} Let $q=2^{h},$ $h\geq 6$ even. Then there exists
a complete $(q+8)/3$-arc $K$ in $PG(2,q)$ such that $\beta
(K)=1$, $p(K)=1.$
\end{lemma}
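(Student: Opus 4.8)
The plan is to read off from \cite{ABA} the explicit description of the Abatangelo $(q+8)/3$-arc and to show that it has the same shape as the arcs treated in Lemma~\ref{Lem_q/2+2}: namely, that it is the union of an affinely complete arc with a pair of points lying on $l_\infty$. Once this shape is established, both $\beta(K)=1$ and $p(K)=1$ follow by the argument already used for Lemma~\ref{Lem_q/2+2}. The hypothesis that $q=2^h$ with $h\ge 6$ even enters through $3\mid q-1$, which is what makes the cardinality $(q+8)/3$ an integer and what the construction of \cite{ABA} exploits.

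First I would write Abatangelo's arc $K$ in coordinates so that $l_\infty$ is one of its secants, and I would isolate the affine part $K^A$, which has size $(q+8)/3-2=(q+2)/3$. The key claim to be verified is that $K^A$ is affinely complete, i.e.\ that the secants of $K^A$ cover every point of $PG(2,q)\setminus l_\infty$. Granting this, Lemma~\ref{Lem_affin+2} applies verbatim and gives $\beta(K)=1$, the unique sum-point being the point $(0,X_1+1,X_2+f)$ of $l_\infty$ exhibited in the proof of Lemma~\ref{Lem_beta(K)>=1}, where $K\cap l_\infty=\{(0,X_1,X_2),(0,1,f)\}$.

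For $p(K)=1$ I would count the secants of $K$ through the sum-point. The line $l_\infty$ is one of them; any other would be a bisecant of $K^A$ through the sum-point, since a secant joining an affine point of $K$ to one of the two points of $K$ on $l_\infty$ meets $l_\infty$ only in that point and hence misses the sum-point. Thus it suffices to check that the sum-point is not covered by the secants of $K^A$. Here I would argue exactly as in Lemma~\ref{Lem_q/2+2}: the affinely complete arc $K^A$ covers fewer than $q-1$ points of $l_\infty$, so at least three points of $l_\infty$ remain uncovered, and by a projectivity integral for $K$ (which preserves $\beta$ and $p$, by Lemma~\ref{Lem_!!!!} and Lemma~\ref{Lem_001}) these may be taken to include the two points of $K$ on $l_\infty$ together with the sum-point, normalized to $(0,0,1)$. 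Then $l_\infty$ is the unique secant through the sum-point, so $p(K)=1$.

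I expect the affine-completeness of $K^A$ to be the main obstacle. The sum-point bookkeeping is purely formal, being the machinery already assembled for Lemma~\ref{Lem_q/2+2}; by contrast, showing that the bisecants of Abatangelo's affine point set cover all of $PG(2,q)\setminus l_\infty$ — and that the prospective sum-point is among the uncovered points of $l_\infty$ — requires a direct analysis of the explicit construction of \cite{ABA} and of the incidences between its secants and the plane, using the cube-root-of-unity structure available because $h$ is even.
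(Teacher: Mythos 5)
Your plan rests on the claim that Abatangelo's arc has the same shape as the arcs in Lemma~\ref{Lem_q/2+2}, i.e.\ that its affine part is affinely complete. This is false, and provably so. In the construction of \cite{ABA}, $K=C_{3}\cup\{Y_{1},Y_{2},X_{\infty}\}$, where $C_{3}=\{(1,g^{3r},g^{-3r})\}$ consists of one third of the affine points of the conic $X_{1}X_{2}=X_{0}^{2}$, the points $Y_{1},Y_{2}$ lie on $l_\infty$, and $X_{\infty}=(1,0,0)$ is the \emph{nucleus} of that conic. The affine part of $K$ is $C_{3}\cup\{X_{\infty}\}$, and the $2(q-1)/3$ affine points of $D_{3}\cup F_{3}$ (the other two thirds of the conic) lie on none of its bisecants: a bisecant of $C_{3}$ meets the conic in two points of $C_{3}$ and hence in no point of $D_{3}\cup F_{3}$ (a conic is an arc), while every line through the nucleus $X_{\infty}$ meets the conic in exactly one point. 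This is precisely why Abatangelo needs $Y_{1}$ and $Y_{2}$: the points of $D_{3}$ (resp.\ $F_{3}$) are covered only by secants through $Y_{1}$ (resp.\ $Y_{2}$). Consequently Lemma~\ref{Lem_affin+2} cannot be applied, and the ``main obstacle'' you flag --- verifying affine completeness of $K^{A}$ --- is not an obstacle to be overcome but a statement that is simply untrue.

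The real difficulty, which your proposal never addresses, is to exclude sum-points \emph{off} $l_\infty$. Since a point of $D_{3}\cup F_{3}$ lies only on secants passing through $Y_{1}$ or $Y_{2}$, a point of the form $P_{r}+Y_{i}$ that happens to fall in $D_{3}\cup F_{3}$ could a priori be a sum-point, and nothing formal rules this out. The paper's proof handles exactly this case by an arithmetic argument: the primitive element $g$ may be chosen with absolute trace $1$ (by \cite[Chapter 4, Corollary~28]{MWS}), and then $P_{r}+Y_{i}\in D_{3}\cup F_{3}$ would force $(g^{3r})^{2}+g^{3r}+g^{i}=0$ with $i\in\{1,2\}$, which is impossible because $g$ and $g^{2}$ both have trace $1$. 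Only after this does one get $\beta(K)=1$ with sum-point $(0,0,1)$ (the argument of Lemma~\ref{Lem_beta(K)>=1} on $l_\infty$, plus bisecants of $C_{3}$ for the points of $T$), and $p(K)=1$ because $(0,0,1)$, being itself a point of the conic, lies on no bisecant of $C_{3}$ and hence on the single secant $l_\infty$, i.e.\ it arises only as $Y_{1}+Y_{2}$. Any correct proof must contain some substitute for this trace computation; a purely structural transfer from Lemma~\ref{Lem_q/2+2} cannot succeed.
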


\begin{proof}
The following construction comes from \cite{ABA}.
Let $g$ be a primitive element of the field $\mathbb{F}_{q}.$
The points $Y_{1},$ $Y_{2},$ and $X_{\infty },$ the pointsets
$C_{3},$ $D_{3}$, $F_{3},$ and $K,$ are defined as follows in \cite{ABA}:
$Y_{1}=(0,1,g^{-1}),$ $Y_{2}=(0,1,g^{-2}),$ $X_{\infty
}=(1,0,0),$ $C_{3}=\{(1,g^{3r},g^{-3r})\,| \,r=0,1,\ldots
,\frac{q-4}{3}\},$ $D_{3}=\{(1,g^{1-3r},g^{-(1-3r)})\,|
\,r=0,1,\ldots ,\frac{q-4}{3}\}, F_{3}=\{(1,g^{2-3r},g^{-(2-3r)})\,| \,r=0,1,\ldots
,\frac{q-4}{3}\},
K=C_{3}\cup \{Y_{1},Y_{2},X_{\infty }\}$.
In \cite{ABA} the following
assertions are proved:
\begin{itemize}
\item[\textrm{(i)}] every point in
    $T=PG(2,q)\smallsetminus (C_{3}\cup D_{3}\cup
    F_{3}\cup l_{\infty }\cup \{(0,0,1),(0,1,0),(1,0,0)\})
    $ lies on some bisecant of $C_{3}$;
\item[\textrm{(ii)}] for every point $D\in D_{3}$ (resp.,
    $F\in F_{3}$) there is a point
$P_{r}=(1,g^{3r},g^{-3r})\in C_{3}\,$such that the points
$D,P_{r},$ and $Y_{1}$ (resp., $F,P_{r},$ and $Y_{2}$) are
collinear;
\item[\textrm{(iii)}] $K$ is a complete $(\frac{q+8}{3})-$arc.
\end{itemize}
By \cite[Chapter 4, Corollary~28]{MWS}, $g$ can be assumed to have  trace equal to $1$.
Note that $l_{\infty }$ is a 2-secant of $K$ and $(0,0,1)$ is
the only sum-point lying on $l_{\infty }$, see the proof of
Lemma~\ref{Lem_beta(K)>=1}. Sum-points not belonging to
$l_{\infty }$ can only be of the form $S_{r,i}=P_{r}+Y_{i},$
$i\in \{1,2\}.$ In fact, sum-points of the form $X_{\infty
}+Y_{i}$ do not exist as, by (i) and (ii), every point of
$PG(2,q)\smallsetminus K$ lies on some bisecant of
$K\smallsetminus \{X_{\infty }\}.$ If $S_{r,i}\in T$ then, by
(i), $S_{r,i}$ lies on a bisecant of $C_{3}$ and then it is not
a sum-point, see the proof of Lemma \ref{Lem_affin+2}. If
$S_{r,i}\notin T$ then $S_{r,i}\in D_{3}\cup F_{3}.$ This means
that $(1,g^{3r},g^{-3r})+(0,1,g^{-i})=(1,a,a^{-1}),$ $a\in
\mathbb{F}_{q},$ whence $(g^{3r})^{2}+g^{3r}+g^{i}=0$ for some
$i\in \{1,2\}$. But this is impossible as the  traces of $g$
and $g^{2}$ are both equal to one.

Therefore,  the only
sum-point is $(0,0,1).$ As it can be obtained only as
the sum $Y_{1}+Y_{2}$, $p(K)=1$ holds.
\end{proof}

Let $\overline{t}_{2}(2,q)$ and $\overline{t}_{2}^{A}(2,q)$ be
the smallest \emph{known }size of a complete arc and of an
affinely complete arc in $ PG(2,q)$, respectively. As every
complete arc is projectively equivalent to an affine arc, we
have $\overline{t}_{2}^{A}(2,q)\leq \overline{t}_{2}(2,q)\leq
\overline{t}_{2}^{A}(2,q)+2.$ Let $\overline{t}_{2}^{\ast
}(2,q)$ be the smallest \emph{known }size of a complete arc $K$
in $PG(2,q)$ with $\beta (K)=1$, and let $\overline{p}(2,q)$ be
the smallest \emph{known } value of $p(K)$ for arcs of size
$\overline{t}_{2}^{\ast }(2,q)$ with $\beta (K)=1$. For $q\leq
2^{18}$, the values of these parameters, either known in the
literature or obtained in this work, are listed in Table 1. Any
value in the table which is not only the smallest \emph{known},
but also the smallest \emph{possible}, is followed by a dot. It
should be noted that for some $q$ we have
$\overline{t}_{2}^{A}(2,q)<\overline{t}_{2}(2,q)$. For $
q=2^{3},2^{4},2^{5},2^{6},2^{9},$ we use the previously known
small complete arcs. For $q=2^{7},2^{15},2^{17},$ by
\cite[Section 5, Lemma 4.3]{Gi}, complete
$\overline{t}_{2}(2,q)$-arcs $K$ are obtained from affinely
complete $\overline{t}_{2}^{A}(2,q)$-arcs $K^{A}$ by adding two
points lying on $l_{\infty }$, as in the proof of Lemma
\ref{Lem_q/2+2}. By the same arguments, the values of
$\overline{t}_{2}^{\ast }(2,2^{18})$ and $\overline{p
}(2,2^{18})$ are obtained. The value of
$\overline{t}_{2}(2,2^{18})$ comes from \cite{DGMP}, where
$6{(\sqrt{q}-1)}${-arcs} in $PG(2,q),$ $q=4^{2h+1},$ are
constructed and for $h\leq 4$ it is proved that they are
complete. Unfortunately, nothing is known on $\beta (K)$ for
these arcs. For $q=2^{16}$ the entries follow from \cite{ABA}
and Lemma \ref{Lem_Abatangelo}.

For $q=2^{8},2^{10},2^{11},2^{12},2^{13},2^{14}$, the complete
$\overline{t}
_{2}(2,q)$-arcs in Table 1 are new.
For $q=2^{11},2^{13},$ they have been obtained by using the
randomised greedy algorithms \cite{P1}, \cite{P2} with a random
starting set. For $q=2^{8},2^{10},2^{12},2^{14},$
our arcs are based on
the $(4\sqrt{q}-4)$-arcs $\mathcal{K}_{w}$ in
$PG(2,q)$ introduced in \cite[p. 115]{FainaJiul}:
\begin{equation}
\mathcal{K}_{w}=\{(1,1/\alpha ,\alpha),(1,1/w\alpha ,w\alpha),(1,w^{
\sqrt{q}-1}/\alpha,\alpha),(1,1/w\alpha ,w^{\sqrt{q}}\alpha)|\alpha \in \mathbb{
F}_{\sqrt{q}}^{\ast }\}  \label{form2_Kw}
\end{equation}
where $w$ is an element of $\mathbb{F}_{q}\smallsetminus \mathbb{F}_{\sqrt{q}%
}$ satisfying $w^{2}+w+d=0,$ with $d\in \mathbb{F}_{\sqrt{q}}$.
Let $\gamma $ be a primitive element of $\mathbb{F}_{q}.$ For
$q=2^{12},$ we put $d=\gamma ^{(q-1)/3}$ and use
$\mathcal{K}_{w}$ as the starting set for the greedy
algorithms. For $q=2^{8},$ the starting set is a subset of
$\mathcal{K}_{w}.$ For $q=2^{10},2^{14},$ we modify
$\mathcal{K}_{w}$.

\begin{lemma}
\label{Lem_Kw'} Let $q=4^{2h+1},$ $h\geq 1.$ Let $w=\gamma
^{(q-1)/3}.$
Then $w^{2}+w+1=0.$
Let $\mathcal{K}_{w}^{\prime
}$ be the point set obtained from the arc $\mathcal{K}_{w}$ of
{\rm (\ref{form2_Kw})} by changing every point $(1,1/w\alpha
,w^{\sqrt{q}}\alpha )$ with the point $(1,1/w^{2}\alpha
,w^{2\sqrt{q}}\alpha)$. Then $\mathcal{K}_{w}^{\prime }$ is a
$(4\sqrt{q}-4)$-arc in $PG(2,q),$ $q=2^{6},2^{10},2^{14}$. For
$q=2^{6},2^{10},$ the arc is complete.
\end{lemma}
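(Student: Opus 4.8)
The plan is to split the statement into three parts: the algebraic identity $w^2+w+1=0$, the claim that $\mathcal{K}_w'$ is a $(4\sqrt q-4)$-arc, and completeness for $q=2^6,2^{10}$. The first part is pure field arithmetic. Since $q=4^{2h+1}\equiv 1\pmod 3$ we have $3\mid q-1$, so $w=\gamma^{(q-1)/3}$ has multiplicative order exactly $3$; then $w\neq 1$ and $w^3=1$, and in characteristic $2$ this gives $0=w^3+1=(w+1)(w^2+w+1)$ with $w+1\neq 0$, whence $w^2+w+1=0$ (so $\mathcal{K}_w$ is the instance $d=1$ of (\ref{form2_Kw})). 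Because $\sqrt q-1=2^{2h+1}-1\equiv 1\pmod 3$, the group $\mathbb{F}_{\sqrt q}^\ast$ has no element of order $3$, so $w\notin\mathbb{F}_{\sqrt q}$; moreover the generator $t\mapsto t^{\sqrt q}$ of $\mathrm{Gal}(\fq/\mathbb{F}_{\sqrt q})$ interchanges the two roots $w,w^2$ of $X^2+X+1$, so $w^{\sqrt q}=w^2$, and hence $w^{\sqrt q-1}=w$ and $w^{2\sqrt q}=w$.

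Next I would reorganize the four families onto conics. Writing each affine point as $(1,c/y,y)$, that is as a point of $\cC_c\colon X_1X_2=cX_0^2$, the relations $w^{\sqrt q-1}=w$ and $w^{2\sqrt q}=w$ turn (\ref{form2_Kw}) together with the modification into: the first two families lie on $\cC_1$, with $y\in\mathbb{F}_{\sqrt q}^\ast$ and $y\in w\mathbb{F}_{\sqrt q}^\ast$ respectively; the third lies on $\cC_w$ with $y\in\mathbb{F}_{\sqrt q}^\ast$; and the modified family lies on $\cC_{w^2}$ with $y\in w\mathbb{F}_{\sqrt q}^\ast$. Since $\mathbb{F}_{\sqrt q}^\ast$ and $w\mathbb{F}_{\sqrt q}^\ast$ are disjoint cosets, and two distinct conics $\cC_c,\cC_{c'}$ meet only where $X_0=0$ (points not in the arc), all $4(\sqrt q-1)$ points are distinct, which gives the size $4\sqrt q-4$.

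For the arc property I would reduce collinearity to one identity: three affine points $(1,c_i/y_i,y_i)$ are collinear if and only if
\[
c_1y_2y_3(y_2+y_3)+c_2y_1y_3(y_1+y_3)+c_3y_1y_2(y_1+y_2)=0.
\]
When all three share a conic ($c_1=c_2=c_3=c$) the left side factors as $c(y_1+y_2)(y_2+y_3)(y_3+y_1)\neq 0$, so no three points of a single conic are collinear (in particular the two families on $\cC_1$ jointly contain no collinear triple). When exactly two points lie on $\cC_c$ and the third on $\cC_{c'}$, the common factor $y_1+y_2$ divides out and the condition becomes
\[
c\,y_3(y_1+y_2+y_3)=c'\,y_1y_2.
\]
To this I would apply the Frobenius $t\mapsto t^{\sqrt q}$, using $y^{\sqrt q}=y$ for $y\in\mathbb{F}_{\sqrt q}^\ast$ and $y^{\sqrt q}=wy$ for $y\in w\mathbb{F}_{\sqrt q}^\ast$, and subtract the image from the original relation; in each coset pattern this collapses, via $w^2+w+1=0$ and $w\neq 1$, to an impossibility (for instance, two points of the first family and one of the third give $(w^2+w)y_1y_2=0$, contradicting $w^2+w=1$). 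This handles all "$2+1$" configurations uniformly.

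The hard part is the genuinely three-conic case, with one point on each of $\cC_1,\cC_w,\cC_{w^2}$: there the factorization is unavailable, and the Frobenius elimination leaves a quadratic relation among $y_1,y_2,y_3$ constrained to their cosets whose solvability is field-dependent — which is exactly why the assertion is restricted to $q=2^6,2^{10},2^{14}$. For these three fields I would verify the absence of such collinear triples by the finite (and small, as only $4\sqrt q-4$ points are involved) evaluation of the relevant determinants, finishing the proof that $\mathcal{K}_w'$ is an arc. Completeness for $q=2^6,2^{10}$ is then a covering statement — every one of the $q^2+q+1$ points must lie on a secant of $\mathcal{K}_w'$ — which I would settle by the same direct check; the fact that this covering is not verified for $q=2^{14}$ is what confines the completeness claim to the two smaller cases.
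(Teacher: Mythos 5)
Your proposal is correct, and it takes a genuinely different route from the paper's own proof, which consists of exactly two sentences: the assertion about $w$ is declared trivial, and all properties of $\mathcal{K}_w'$ (size, arc property, completeness) are stated to ``have been checked by computer.'' You instead prove the field-theoretic facts honestly ($w^2+w+1=0$, $w\notin\mathbb{F}_{\sqrt{q}}$, $w^{\sqrt{q}}=w^2$), organize the four families on the conics $\mathcal{C}_c\colon X_1X_2=cX_0^2$ with $c\in\{1,w,w^2\}$, and kill every collinear triple supported on at most two of these conics via the determinant identity and the coset argument (equivalently, comparing coordinates in the basis $\{1,w\}$ of $\mathbb{F}_q$ over $\mathbb{F}_{\sqrt{q}}$); I checked that this elimination does go through in every ``$2+1$'' coset pattern, not only the one you display. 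Only the one-point-per-conic configurations and completeness are deferred to a finite check, so your proof is valid and far less computer-dependent than the paper's; what the paper's approach buys is only brevity. One inaccuracy worth fixing: the three-conic case is not ``field-dependent'' as you speculate, and your own framework closes it. In the two remaining patterns (a point of the first, respectively second, family together with one point of the third and one of the modified family), the Frobenius elimination first forces one parameter (e.g.\ $u_3=y_2^2/y_1$ in the first pattern, writing the modified point's parameter as $y_3=wu_3$) and then collapses to $t^4+t+1=0$, respectively $t^4+t^3+1=0$, where $t$ is a ratio of two parameters in $\mathbb{F}_{\sqrt{q}}^\ast$. Both quartics are irreducible over $\mathbb{F}_2$ with splitting field $\mathbb{F}_{2^4}$, and since $\sqrt{q}=2^{2h+1}$ has odd degree over $\mathbb{F}_2$, neither has a root in $\mathbb{F}_{\sqrt{q}}$. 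Hence $\mathcal{K}_w'$ is in fact a $(4\sqrt{q}-4)$-arc for \emph{every} $q=4^{2h+1}$, $h\geq 1$; only the completeness claim, which indeed fails for $q=2^{14}$, genuinely requires the computer verification for $q=2^6,2^{10}$.
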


\begin{proof}
The assertion about $w$ is trivial. The properties of
$\mathcal{K}_{w}^{\prime }$ have been checked by computer.
\end{proof}

If $q=2^{14}$ the arc $\mathcal{K}_{w}^{\prime }$ turns out not to be complete, and in order to obtain a complete arc in $PG(2,2^{14})$ we use
$\mathcal{K} _{w}^{\prime }$ as the starting point for the greedy
algorithms.

Finally, it should be noted that for $q=2^{3},\ldots
,2^{6},2^{8},\ldots ,2^{14}$, the values of $\overline{t}
_{2}^{\ast }(2,q)$ and $\overline{p}(2,q)$ have been obtained
by acting on complete $\overline{t}_{2}(2,q)$-arcs with both
randomly chosen collineations and  projectivities of
type $\phi (X_{0},X_{1},X_{2})=(X_{0},X_{1}+wX_{2},X_{2}).$

\newpage

\begin{center}
Table 1. Parameters for $q\leq 2^{18}$
\end{center}
\begin{equation*}
\renewcommand{\arraystretch}{1.0}
\begin{array}{c|c|c|c|c|c|l}
\hline
q & \overline{t}_{2}^{A}(2,q) & \overline{t}_{2}(2,q) & \overline{t}
_{2}^{\ast }(2,q) & \overline{p}(2,q) & (\overline{t}_{2}^{\ast }(2,q)-2)
\overline{p}(2,q) & \text{References} \\ \hline
2^{3} & 6\centerdot  & 6\centerdot  & 6\centerdot  & 1\centerdot  & 4<2^{3}-1
& \text{\cite{P2}} \\
2^{4} & 9\centerdot  & 9\centerdot  & 9\centerdot  & 1\centerdot  & 7<2^{4}-1
& \text{\cite{P2}} \\
2^{5} & 14 & 14 & 14 & 1 & 12<2^{5}-1 & \text{\cite{P2}} \\
2^{6} & 22 & 22 & 22 & 1 & 20<2^{6}-1 & \text{\cite{P2}} \\
2^{7} & 32 & 34 & 34 & 1 & 32<2^{7}-1 & \text{\cite[Sec.\thinspace 5,
Lem.\thinspace 4.3]{Gi}} \\
2^{8} & 55 & 55 & 55 & 2 & 106<2^{8}-1 & \star, \text{ \cite{FainaJiul}}\\
2^{9} & 86 & 86 & 86 & 3 & 252<2^{9}-1 & \text{\cite{P2}} \\
2^{10} & 124 & 124 & 124 & 1 & 122<2^{10}-1 &\star, \text{ \cite{FainaJiul}},
\text{ Lemma \ref{Lem_Kw'}} \\
2^{11} & 201 & 201 & 201 & 4 & 796<2^{11}-1 & \star  \\
2^{12} & 307 & 307 & 307 & 5 & 1525<2^{12}-1 & \star, \text{ \cite{FainaJiul}} \\
2^{13} & 461 & 461 & 461 & 6 & 2754<2^{13}-1 & \star\\
2^{14} & 665 & 665 & 665 &  11&7293 <2^{14}-1 & \star, \text{ \cite{FainaJiul}},
\text{ Lemma \ref{Lem_Kw'}} \\
2^{15} & 2^{10} & 2^{10}+2 & 2^{10}+2 & 1 & 2^{10}<2^{15}-1 & \text{\cite[
Sec.\thinspace 5, Lem.\thinspace 4.3]{Gi}} \\
2^{16} & \frac{1}{3}(2^{16}+8) & \frac{1}{3}(2^{16}+8) & \frac{1}{3}%
(2^{16}+8) & 1 & \frac{1}{3}(2^{16}+2)<2^{16}-1 & \text{\cite{ABA}, Lemma
\ref{Lem_Abatangelo}} \\
2^{17} & 2^{16}\leq  & 2^{16}+2\leq  & 2^{16}+2\leq  & 1 & 2^{16}<2^{17}-1 &
\text{\cite{Gi}, Lemma \ref{Lem_q/2+2}} \\
2^{18} & 3066 & 3066 & 2^{17}+2 & 1 & 2^{17}<2^{18}-1 & \text{\cite{DGMP},\cite{Gi},
Lemma \ref{Lem_q/2+2}} \\ \hline
\end{array}
\end{equation*}

\vspace*{0.3cm}

On the basis of Table 1, together with the results of some computer
search, we make the following conjecture.
\begin{conjecture}
\label{Conj} Every complete arc in $PG(2,q)$ is projectively
equivalent to an arc with only one  sum-point.
\end{conjecture}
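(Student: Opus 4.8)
The plan is to reduce the conjecture to an affine statement about eliminating ``affine sum-points,'' and then attack the latter by a counting argument over a suitable family of \emph{non-integral} projectivities. First I would record the following consequence of the proof of Lemma~\ref{Lem_beta(K)>=1}: if $l_\infty$ is a $0$- or $1$-secant of a complete arc, then every point of $l_\infty\setminus K$ is a sum-point, so $\beta(K)\geq q$. Hence $\beta(\psi(K))=1$ forces $\psi^{-1}(l_\infty)$ to be a \emph{bisecant} of $K$, and in that case $l_\infty$ carries exactly one sum-point. Thus, after choosing a bisecant $\ell$ of $K$ and a projectivity sending it to $l_\infty$ with $\psi(K)\cap l_\infty=\{(0,0,1),(0,1,0)\}$ as in Lemma~\ref{Lem_011}, the unique sum-point on $l_\infty$ is $(0,1,1)$, and the conjecture becomes: \emph{some} bisecant $\ell$ admits a normalization for which $\psi(K)$ has \emph{no} affine sum-point.

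The second step pins down affine sum-points combinatorially. Writing $A=(0,0,1)$ and $B=(0,1,0)$, an affine point $Q=(1,x,y)$ on an affine--affine secant $Q=c_1P_1+c_2P_2$ cannot be a sum-point, since the first coordinate gives $c_1+c_2=1$ while $c_1=c_2$ forces $c_1+c_2=0$ in characteristic $2$; so every secant through an affine sum-point must meet $K\cap l_\infty=\{A,B\}$. On a secant $\{A,P\}$ with $P=(1,a,b)$ the normalization gives $Q=c_1A+P$, so the condition $c_1=1$ reads $Q=A+P$, and similarly $Q=B+P'$ on a horizontal secant. Consequently every affine sum-point lies in the finite set $\{A+P,\,B+P:\,P\in K\ \text{affine}\}$, must avoid all affine--affine secants, and, when covered by two secants, must equal both $A+P$ and $B+P'$ for suitable $P,P'\in K$. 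This is a rigid configuration, and the goal is to destroy it.

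The elimination step uses the one-parameter family $\psi_\lambda(X_0,X_1,X_2)=(\lambda X_0,X_1,X_2)$, $\lambda\in\fq^\ast$, which fixes $A$, $B$, $l_\infty$ pointwise and the surviving sum-point $(0,1,1)$, but acts on the affine chart by $(x,y)\mapsto(x/\lambda,y/\lambda)$. Crucially $\psi_\lambda$ is \emph{not} integral for $K$ once $K$ meets $l_\infty$, so, in contrast with Lemma~\ref{Lem_!!!!}, it genuinely moves sum-points. The candidate affine sum-point attached to a vertical secant through $\psi_\lambda(P)$ is $A+\psi_\lambda(P)=(1,\,a/\lambda,\,b/\lambda+1)$, which traverses the plane as $\lambda$ varies, whereas the affine--affine incidence structure of $\psi_\lambda(K)$ is independent of $\lambda$ (the affine action is linear). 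I would then count, over the pairs $(\ell,\lambda)$, the survivors: for each affine $P\in K$ the event ``$A+\psi_\lambda(P)$ is a genuine sum-point'' requires this moving point to avoid every affine--affine secant and to be matched by a $B$-secant, and one tries to show this fails for all but $O(1)$ values of $\lambda$, so that the number of bad $(\ell,\lambda)$ is smaller than the number available and a good choice exists.

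The main obstacle is uniformity over \emph{all} complete arcs, and in particular the smallest ones. When $k$ is of the order of $\sqrt q$, the $\binom{k-2}{2}$ affine--affine secants need not cover all of the $q^2$ affine points, so the sufficient condition of Lemma~\ref{Lem_affin+2} (affine completeness of $K\setminus\ell$) genuinely fails and the crude covering bound is too weak to disqualify every moving candidate. One is then forced to exploit the simultaneous matching $Q=A+P=B+P'$ and the freedom in the choice of bisecant $\ell$ in tandem, and controlling this interaction for an \emph{arbitrary} complete arc is exactly the point I do not see how to push through in general. This is why the statement is recorded as a conjecture, corroborated by the exhaustive data of Table~1 and by computer search, rather than proved.
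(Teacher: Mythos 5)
This statement is a \emph{conjecture} in the paper: no proof is given, and the only supporting evidence offered is Table~1 together with computer searches (the authors report acting on complete arcs with randomly chosen collineations and with projectivities of type $\phi(X_0,X_1,X_2)=(X_0,X_1+wX_2,X_2)$, which, like your $\psi_\lambda$, are not integral and hence genuinely move sum-points). So there is no proof in the paper to compare your attempt against, and your closing assessment --- that the argument cannot be pushed through uniformly and the statement should remain open --- is exactly the correct conclusion, not a defect of your write-up.

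Your partial reductions are sound and consistent with the paper's lemmas. That a $0$- or $1$-secant $l_\infty$ forces every point of $l_\infty\setminus K$ to be a sum-point is the proof of Lemma~\ref{Lem_beta(K)>=1}, and your sharpening that a bisecant $l_\infty$ carries exactly one sum-point is correct: every infinite point on an affine--affine secant automatically satisfies $c_1=c_2$ there, so the only discriminating secant through a point of $l_\infty\setminus K$ is $l_\infty$ itself, which singles out the one point $(0,X_1+1,X_2+f)$. That an affine point on an affine--affine secant cannot be a sum-point (since $c_1+c_2=1\neq 0$ in characteristic $2$) is the proof of Lemma~\ref{Lem_affin+2}; the normalization $K\cap l_\infty=\{(0,0,1),(0,1,0)\}$ with unique infinite sum-point $(0,1,1)$ is Lemma~\ref{Lem_011}; and your observation that only non-integral projectivities can alter the sum-point configuration is the right reading of Lemma~\ref{Lem_!!!!} and of the remark that the notion is ``not geometrical.'' Your rigidity description of affine sum-points (each must be of the form $A+P$ or $B+P'$, must avoid all affine--affine secants, and must satisfy the matching condition whenever both the $A$-line and the $B$-line through it are secants) is also correct. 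Where you stall is indeed the genuine obstruction: for complete arcs of size $k=O(\sqrt q)$ the roughly $\binom{k}{2}=O(q)$ affine--affine secants cannot cover the $q^2$ affine points, so the crude count over $\lambda\in\fq^\ast$ leaves candidates alive, and no uniform mechanism over arbitrary complete arcs is known; this is precisely why the authors corroborate the statement only empirically, for $q\leq 2^{18}$, rather than prove it.
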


Let $K$ be an affinely complete arc in $PG(2,q)$. Without loss
of generality, assume that the point $(0,0,1)$ is covered by the
secants of $K$. The following results will be needed in the sequel.

\begin{lemma}\label{primo}
For an affinely complete arc $K$ in $PG(2,q)$ such that $(0,0,1)$ is
covered by the secants of $K$, it  can be assumed that $1\notin
S_\infty(K)$.
\end{lemma}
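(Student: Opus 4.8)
The plan is to reuse the diagonal-scaling idea already employed in the proof of Lemma~\ref{Lem_Sinfnot1}, but with the simplification that here we only need to preserve affine completeness and the covering of $(0,0,1)$, not any condition on sum-points. The key observation is that a projectivity fixing both $l_\infty$ (setwise) and the point $(0,0,1)$ automatically respects the whole hypothesis of the statement, so it suffices to find such a projectivity acting suitably on $S_\infty(K)$.

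Concretely, for each $w\in\fq^\ast$ I would consider the projectivity
$$
\psi_w(X_0,X_1,X_2)=(X_0,X_1,w^{-1}X_2).
$$
First I would verify that $\psi_w$ fixes $l_\infty$ setwise and fixes the point $(0,0,1)$. Hence $\psi_w$ permutes the affine points of $PG(2,q)$ and carries secants to secants, so $\psi_w(K)$ is again affine and affinely complete; and since $(0,0,1)$ is fixed, any $2$-secant of $K$ through $(0,0,1)$ is sent to a $2$-secant of $\psi_w(K)$ through $(0,0,1)$, so the point $(0,0,1)$ remains covered by $\psi_w(K)$. Thus the standing hypotheses hold for $\psi_w(K)$, for every choice of $w$.

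Next I would track the effect on $S_\infty$. The pairs contributing to $S_\infty(K)$ are exactly the pairs $(1,a,b),(1,a,b')$ with $b\ne b'$ (i.e.\ collinear with $(0,0,1)$); $\psi_w$ sends them to $(1,a,w^{-1}b),(1,a,w^{-1}b')$, whose contribution is $w^{-1}(b+b')$. Therefore $S_\infty(\psi_w(K))=w^{-1}S_\infty(K)$, so $1\in S_\infty(\psi_w(K))$ if and only if $w\in S_\infty(K)$. It then remains to choose $w\notin S_\infty(K)$. Since $X_2\ne Y_2$ forces $X_2+Y_2\ne 0$ in characteristic two, we have $0\notin S_\infty(K)$, so $S_\infty(K)\subseteq\fq^\ast$; and by the bound already recorded in this section, $|S_\infty(K)|\le k/2$. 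As any arc in $PG(2,q)$ has size $k\le q+2$, for $q\ge 8$ we get $|S_\infty(K)|\le (q+2)/2< q-1=|\fq^\ast|$, so a suitable $w\in\fq^\ast\setminus S_\infty(K)$ exists. For this $w$ we obtain $1\notin S_\infty(\psi_w(K))$, and replacing $K$ by $\psi_w(K)$ proves the claim.

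I do not expect a genuine obstacle here: the only point needing a little care is the counting that guarantees an admissible $w$, and this is immediate from the bound $|S_\infty(K)|\le k/2$ together with $k\le q+2$. Everything else is a routine check that the diagonal scaling $\psi_w$ preserves both affine completeness and the covering of $(0,0,1)$.
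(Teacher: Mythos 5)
Your proposal is correct and follows essentially the same route as the paper: the paper's proof of this lemma simply refers to the proof of Lemma~\ref{Lem_Sinfnot1}, which uses exactly your diagonal scaling $\psi(X_0,X_1,X_2)=(X_0,X_1,X_2/w)$ with $w$ chosen outside the (at most $k/2$) values $X_2+Y_2$ arising from pairs collinear with $(0,0,1)$. Your write-up just makes explicit the counting $|S_\infty(K)|\le k/2\le (q+2)/2<q-1$ and the preservation of affine completeness, which the paper leaves implicit.
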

\begin{proof} See the proof of Lemma \ref{Lem_Sinfnot1}.
\end{proof}

\begin{lemma}\label{secondo} Let $K$  be an affinely complete $k$-arc $K$ in $PG(2,q)$ such that $(0,0,1)$ is
covered by the secants of $K$. Assume that $k<q-5$. Then there exist
$m_1,m_2\in \fq^*$ with $m_1\ne m_2$, $(m_1+m_2)^3\ne 1$, $m_i\ne
1$, such that $$1\notin S_{m_1}(K)\cup S_{m_2}(K).$$
\end{lemma}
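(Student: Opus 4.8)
The plan is to translate the condition $1\in S_m(K)$ into a statement about differences of points of $K$, bound the set of $m$ for which it holds, and then choose $m_1,m_2$ by a counting argument.

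First I would record the reformulation. Since $K$ is affinely complete, all of its points are affine and may be written as $(1,a,c)$. Unwinding the definition of $S_m(K)$ and using that $q$ is even, one checks that $1\in S_m(K)$ holds if and only if $K$ contains two points $(1,a,c)$ and $(1,a+1,c')$ with $c+c'=m$; equivalently, $(0,1,m)$ is the sum of two points of $K$ whose first affine coordinates differ by $1$. Write $B=\{m\in\fq:1\in S_m(K)\}$ for the set of ``bad'' values. Next I would bound $|B|$. As $K$ is an arc, each vertical line $X_1/X_0=a$ meets $K$ in at most two points; let $n_a\in\{0,1,2\}$ denote this number, so $\sum_a n_a=k$. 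Every bad $m$ arises from a pair of points in two consecutive columns, hence, using the fixed pairing $a\leftrightarrow a+1$ of $\fq$,
\begin{equation*}
|B|\;\le\;\sum_{\{a,a+1\}}n_a n_{a+1}\;\le\;\sum_{a}n_a\;=\;k,
\end{equation*}
the middle step being $n_an_{a+1}\le n_a+n_{a+1}$, valid since $n^2\le 2n$ for $n\le 2$.

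Then I would select $m_1,m_2$. Put $A=\fq\setminus(\{0,1\}\cup B)$, so that any two distinct elements of $A$ meet every requirement of the statement except possibly $(m_1+m_2)^3\ne 1$; the latter fails only when $m_1+m_2$ lies in the set $U$ of cube roots of unity in $\fq$, and $|U|=\gcd(3,q-1)\le 3$. If $|A|\ge 5$, then $A$ cannot be contained in a set of the form $\{m_0\}\cup(m_0+U)$, which has at most $|U|+1\le 4$ elements; hence some pair of distinct $m_1,m_2\in A$ satisfies $m_1+m_2\notin U$, and these are the desired values. So it suffices to guarantee $|A|\ge q-|B|-2\ge 5$, i.e.\ $|B|\le q-7$.

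The delicate point — and the step I expect to be the main obstacle — is that the crude bound only gives $|B|\le k\le q-6$, hence $|A|\ge 4$, which is not enough: when $3\mid q-1$, a set $A$ of size $4$ equal to a coset of $\mathbb{F}_4$ has all pairwise sums in $U$. I would close this gap by a parity observation. Equality $\sum_{\{a,a+1\}}n_an_{a+1}=\sum_a n_a$ forces $(n_a,n_{a+1})\in\{(0,0),(2,2)\}$ in every block, so the occupied columns are all full and matched in pairs, whence $4\mid k$. Since $q=2^h$ gives $q-6\equiv 2\pmod 4$, the value $k=q-6$ is never a multiple of $4$. Therefore either $4\nmid k$, in which case the inequality is strict and $|B|\le k-1\le q-7$, or $4\mid k$, in which case $k\le q-8$; in both cases $|B|\le q-7$, so $|A|\ge 5$ and the selection above goes through.
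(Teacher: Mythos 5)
Your proof is correct. Its first half is the same as the paper's: the paper also bounds the number of ``bad'' values $m$ (those with $1\in S_m(K)$) by $k$, counting pairs of points of $K$ on the two lines $X_1=aX_0$ and $X_1=(a+1)X_0$. The selection step is where you genuinely diverge, and your extra care pays off. The paper argues greedily: since at least six good values exist, pick $m_1\notin\{0,1\}$ good, then pick $m_2$ good outside the five-element set $\{0,1,m_1,\theta_1+m_1,\theta_2+m_1\}$, where $\theta_1,\theta_2$ are the nontrivial cube roots of unity. As written, this omits the exclusion $m_2\neq m_1+1$, which is needed because $(m_1+m_2)^3=1$ also occurs when $m_1+m_2=1$ --- a case that must genuinely be avoided, as the proof of Lemma \ref{chiave0} shows. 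Once that sixth element is added to the excluded set, ``at least six good values'' is no longer sufficient on its own: a priori the good set could be exactly $\{0,1\}\cup(m_0+\mathbb{F}_4)$, which is precisely the obstruction you point out with the coset of $\mathbb{F}_4$. Your two additional ingredients --- the pigeonhole over $\{m_0\}\cup(m_0+U)$, and above all the parity refinement (equality in the counting bound forces every block to be of type $(0,0)$ or $(2,2)$, hence $4\mid k$, while $q-6\equiv 2\pmod 4$, so $|B|\le q-7$ in all cases) --- are exactly what rules this configuration out. In short, you follow the paper's counting but replace its selection step by a correct and slightly stronger one, thereby repairing a small oversight in the paper's own proof.
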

\begin{proof}
First we prove that the number of values of $m$ for which $1\in
S_m(K)$ is at most $k$. For $a \in \fq$, let $n_a$ be the number of
pairs $(1,a,b)$, $(1,a+1,d)$ of points in $K$. Let $l_1$ be the line
of equation $X_1=aX_0$, and $l_2$ that of equation $X_1=(a+1)X_0$.
It is straightforward to check that $n_a$ is equal to
\begin{itemize}
\item $4$, if both lines $l_1$ and $l_2$ are secants to $K$; \item
$2$, if one of the two lines is a 1-secant to $K$, and the other is
a 2-secant; \item $1$, if both lines are $1$-secant to $K$; \item
$0$, if at least one line is a $0$-secant to $K$.
\end{itemize}
Then $n_a$ is less than or equal to  the number of points in $K$
belonging to $l_1\cup l_2$. When $a$ ranges over $\fq$,  we obtain
at most $k$ pairs
 $(1,a,b)$, $(1,a+1,d)$ of points in $K$.
Then by the assumption $k<q-5$, there are at
 least $6$ values of $m\in \fq$ such that $1\notin S_m(K)$.
Let $\theta_1$ and $\theta_2$ be the roots of $T^3=1$ distinct from
$1$. Choose $m_1\notin \{0,1\}$ such that $1\notin S_{m_1}(K)$. Then
there exists $m_2\notin \{0,1,m_1,\theta_1+ m_1,\theta_2+m_1\}$,
with $1\notin S_{m_2}(K)$. This completes the proof.
\end{proof}

\section{Caps in projective spaces of even dimension}\label{main}

Throughout this section the following notation is fixed. Let $s$ be
a positive integer. Let $X_{0},X_{1},\ldots ,X_{2s+2}$ be
homogeneous coordinates for the points of $PG(2s+2,q)$. For
$i=0,\ldots, 2s+1$, let $H_i$ be the subspace of $PG(2s+2,q)$ of
equations $X_0=\ldots=X_{i}=0$. Let $AG(N,q)$ be the $N$-dimensional
affine space over $\fq$. As usual, a point in $AG(N,q)$ is
identified with a vector in $\fq^N$. For any integer $j\geq 1$, let
\begin{equation}\label{cart}
\cP^j=\{(a_1,a_1^2,\ldots,a_{j},a_{j}^2)\mid a_1,\ldots,a_{j}\in
\fq\}\,\subset AG(2j,q).
\end{equation}
The set $\cP^j$ is a cap in $AG(2j,q)$, as it was first noticed in
\cite{PS}.

The so called {\em product construction}, first introduced in
\cite{MUK}, is the starting point for our constructions of small
complete caps in $PG(2s+2,q)$.
\begin{proposition}[see \cite{EDE}]
Let $C_1\subset \fq^{N_1+1}$ be a set of representatives of a cap
$C=\langle C_1 \rangle\subset PG(N_1,q)$,  and  let $C_2\subset
AG(N_2,q)$ be a cap. Then the product
$$
(C:C_2):=\{(P,Q)\mid P\in C_1, Q\in C_2\}\subset
PG(N_1+N_2,q)\,\,\text{ is a cap.}
$$
\end{proposition}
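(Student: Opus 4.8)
The plan is to prove the contrapositive of collinearity: I assume that three pairwise distinct points of $(C:C_2)$ lie on a common line and derive a contradiction. Write the three points as vectors $v_i=(P_i,Q_i)$ with $P_i\in C_1\subset\fq^{N_1+1}$ and $Q_i\in C_2\subset\fq^{N_2}$, for $i=1,2,3$. Three distinct points of $PG(N_1+N_2,q)$ are collinear exactly when their representative vectors are linearly dependent, so there are scalars $(\alpha,\beta,\gamma)\neq(0,0,0)$ with $\alpha v_1+\beta v_2+\gamma v_3=0$. Reading off the first $N_1+1$ coordinates and the last $N_2$ coordinates separately splits this single relation into $\alpha P_1+\beta P_2+\gamma P_3=0$ and $\alpha Q_1+\beta Q_2+\gamma Q_3=0$. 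The whole argument is then a case analysis according to how many of the $P_i$ coincide; here I use that $C_1$ is a \emph{set of representatives}, so $P_i=P_j$ as vectors if and only if they represent the same point of the cap $C$.

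First I would dispose of the cases in which at least two of the $P_i$ are distinct as points of $C$. If $P_1,P_2,P_3$ are three distinct points of $C$, then, $C$ being a cap, they are not collinear in $PG(N_1,q)$, i.e.\ the vectors $P_1,P_2,P_3$ are linearly independent; the relation $\alpha P_1+\beta P_2+\gamma P_3=0$ then forces $\alpha=\beta=\gamma=0$, a contradiction. If exactly two distinct points occur among the $P_i$ — say two indices share a common vector $P$ and the third is $P'\neq P$ — I group coefficients so that the $P$-block reads $s\,P+s'\,P'=0$, where $s$ is the sum of the coefficients attached to the two $v_i$ with $P_i=P$ and $s'$ is the coefficient of the third. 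Two distinct projective points have linearly independent representatives, so $s=s'=0$; feeding this back into $\alpha v_1+\beta v_2+\gamma v_3=0$ leaves a relation $c\,(v_i-v_j)=0$ between the two points sharing $P$. If $c\neq0$ this says $v_i=v_j$, contradicting distinctness; if $c=0$ all three coefficients vanish, again a contradiction.

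The only remaining case is $P_1=P_2=P_3=:P$, and this is the delicate one. Here the $P$-block collapses to $(\alpha+\beta+\gamma)P=0$, and since $P\neq0$ we get $\alpha+\beta+\gamma=0$. The surviving $Q$-block relation $\alpha Q_1+\beta Q_2+\gamma Q_3=0$ together with $\alpha+\beta+\gamma=0$ and $(\alpha,\beta,\gamma)\neq(0,0,0)$ is exactly an \emph{affine} dependence among $Q_1,Q_2,Q_3$; these are pairwise distinct (the $v_i$ are distinct but share the same $P$), so they are three collinear points of $AG(N_2,q)$, contradicting that $C_2$ is a cap. The main obstacle is precisely this reconciliation of the two notions of collinearity: the $P$-part is controlled by \emph{projective} non-collinearity in $C$, whereas the $Q$-part must be pushed into an \emph{affine} dependence in order to invoke the cap property of $C_2$, and it is the constraint $\alpha+\beta+\gamma=0$ extracted from the first block that makes this translation work. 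Assembling the three cases shows that no three points of $(C:C_2)$ are collinear, i.e.\ $(C:C_2)$ is a cap.
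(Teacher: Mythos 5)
Your proof is correct and complete. The paper itself gives no proof of this proposition --- it is quoted from \cite{EDE} --- so there is no internal argument to compare against; your proof (splitting the dependence $\alpha v_1+\beta v_2+\gamma v_3=0$ into the projective $P$-block and the affine $Q$-block, the case analysis on how many of the $P_i$ coincide, and the key step of converting $\alpha+\beta+\gamma=0$ into an affine dependence among distinct $Q_i$ so that the cap property of $C_2$ applies) is exactly the standard argument for the product construction, and every step, including the use of $C_1$ being a set of representatives to identify equality of the vectors $P_i$ with equality of the corresponding points of $C$, is sound.
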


In this section we consider products $(K:\cP^s)$, where $K$ is an
arc in $PG(2,q)$. Completeness of $K$ in $PG(2,q)$ is not enough to
guarantee the completeness of $(K:\cP^s)$ in $PG(2s+2,q)$. In order
to obtain a complete cap, the following inductive construction of a
cap in $PG(2s+1,q)$ will be a key tool.

 Let $m_1,m_2\in \fq^*$ with
$m_1\ne m_2$, $(m_1+m_2)^3\ne 1$, $m_i\ne 1$. Let
$K_{m_1,m_2}^{(1)}$ be the subset of $PG(1,q)$ consisting of points
$\{(1,0),(0,1)\}$. For $i\ge 1$, let
\begin{equation}\label{m1m2}
K_{m_1,m_2}^{(2i+1)}=A_1^{(2i+1)}\cup A_2^{(2i+1)}\cup
A_3^{(2i+1)}\cup \{(1,0,\ldots,0)\}\subset PG(2i+1,q)
\end{equation}
where
$$
\begin{array}{ll}
A_1^{(2i+1)}=&\{(1,m_1,a_1,a_1^2,a_2,a_2^2,\ldots,a_i,a_i^2),\\
{} &
(1,m_2,a_1,a_1^2,a_2,a_2^2,\ldots,a_i,a_i^2)\mid\\
{} & a_1,\ldots,a_i\in\fq, (a_1,\ldots,a_i)\ne(0,\ldots,0) \},
\end{array}
$$

$$
\begin{array}{ll}
A_2^{(2i+1)}=&\{(0,1,a_1,a_1^2,a_2,a_2^2,\ldots,a_i,a_i^2)\mid\\
{} & a_1,\ldots,a_i\in\fq, (a_1,\ldots,a_i)\ne(0,\ldots,0) \},
\end{array}
$$

$$
A_3^{(2i+1)}=\{(0,0,b_0,b_1,\ldots,b_{2i-1})\mid
(b_0,b_1,\ldots,b_{2i-1})\in K^{(2i-1)}_{m_1,m_2}\}.
$$

Let $K_2^*(m_1,m_2)=K^{(2s+1)}_{m_1,m_2}\setminus
\{(1,0,\ldots,0)\}$.
\begin{proposition}\label{chiave}
If $q>4$, then the set $K_2^*(m_1,m_2)$ is a cap in $PG(2s+1,q)$
which covers all the points in $PG(2s+1,q)$ with the exception of
points
$$
(1,m,0,0\ldots,0),\,\,\,m\in \fq.
$$
\end{proposition}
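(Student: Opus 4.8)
The plan is to prove both assertions simultaneously by induction on $s$, exploiting the recursive definition (\ref{m1m2}). As inductive hypothesis I would carry the slightly stronger statement: for every $i\ge 1$ the \emph{full} set $K^{(2i+1)}_{m_1,m_2}$ is a cap in $PG(2i+1,q)$, and its subset $K^{(2i+1)}_{m_1,m_2}\setminus\{(1,0,\ldots,0)\}$ has bisecants covering every point of $PG(2i+1,q)$ except those of the form $(1,m,0,\ldots,0)$, $m\in\fq$. The full set is needed because $A_3^{(2i+1)}$ is exactly a copy of $K^{(2i-1)}_{m_1,m_2}$, \emph{including its apex}, embedded in the subspace $H_1$ of equations $X_0=X_1=0$; moreover each tail $(a_1,a_1^2,\ldots,a_i,a_i^2)$ ranges over the cap $\cP^i$ of (\ref{cart}). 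The base case $i=1$ lives in $PG(3,q)$, where $A_3^{(3)}=\{(0,0,1,0),(0,0,0,1)\}$ is explicit, and can be verified directly.

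For the cap property I would argue by a case analysis according to how three hypothetically collinear points distribute among $A_1^{(2i+1)}$, $A_2^{(2i+1)}$ and $A_3^{(2i+1)}$. Three points of $A_3$ is precisely the inductive hypothesis. If exactly two lie in $A_3$, the line joining them is contained in the subspace $H_1$, whereas every point of $A_1$ and $A_2$ has $X_0=1$ or $X_1=1$ and thus lies off $H_1$, so no third collinear point can occur. The remaining cases — at most one point in $A_3$ — reduce to explicit computations in the coordinate blocks $(X_{2r},X_{2r+1})$, where the entries have the shape $(a_r,a_r^2)$; here collinearity is excluded using that a line meets $\cP^i$ in at most two points together with the hypotheses $m_1\ne m_2$, $m_i\ne 1$ and $(m_1+m_2)^3\ne 1$, which rule out the degenerate coincidences (including the possibility that a line through two points of $A_1\cup A_2$ meets $H_1$ at a point of $A_3$).

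The covering assertion is the substantial part, and I would organise it by the pair $(X_0,X_1)$ of a target point $P$. If $P\in H_1$, then $P=(0,0,\vec b)$ and the inductive hypothesis covers $\vec b$ by a bisecant of $A_3$ (which lies in $H_1$), unless $\vec b=(1,m,0,\ldots,0)$, i.e. $P=(0,0,1,m,0,\ldots,0)$; for $m\ne 0$ such $P$ is covered by the bisecant through $(0,1,a_1,a_1^2,0,\ldots,0)$ and $(0,1,b_1,b_1^2,0,\ldots,0)$ in $A_2$ with $a_1+b_1=m$ and $a_1,b_1\ne 0$, which exist since $q>4$, while for $m=0$ the point $(0,0,1,0,\ldots,0)$ already lies in $A_3$. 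If $X_0=1$ and the tail is a nonzero point $\vec a\in\cP^i$, then $P=(1,c,\vec a)$ is covered by the bisecant joining $(1,m_1,\vec a)$ and $(1,m_2,\vec a)$ of $A_1$, which passes through $(1,c,\vec a)$ for every $c\in\fq$ because $m_1+m_2\ne 0$. The genuinely delicate targets are those with $X_0=1$ (or $X_0=0,X_1=1$) whose tail is \emph{not} on $\cP^i$: a single bisecant inside $A_1$ forces one common affine parameter across all coordinate blocks, which is far too rigid, so I would cover these by \emph{mixed} bisecants joining a point of $A_1\cup A_2$ to a point of $A_3$, writing collinearity as a linear system in the free parameters and reducing it to quadratic (trace) conditions. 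The hypotheses on $m_1,m_2$ together with $q>4$ are exactly what guarantee solvability of these conditions. This step is the main obstacle.

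Finally I would confirm that the exceptional points are genuinely uncovered: if $(1,m,0,\ldots,0)=\lambda P+\mu Q$ with $P,Q$ in the cap, the vanishing of the tail forces $\lambda+\mu=0$ when $P,Q$ share a tail (contradicting $X_0=1$) and leads to a contradiction with the shape of $\cP^i$ otherwise, the crucial point being that the apex $(1,0,\ldots,0)$ has been deleted from the cap. This closes the induction, and all planar input enters only through the standing conditions on $m_1,m_2$.
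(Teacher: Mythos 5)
Your skeleton is the same as the paper's (induction on $s$; prove the full set $K^{(2s+1)}_{m_1,m_2}$ is a cap; prove the covering property; observe the apex is never used as an endpoint of a covering bisecant), and several pieces are sound: the treatment of points of $H_1$ by induction on $A_3$ plus explicit bisecants for $(0,0,1,m,0,\ldots,0)$, and the covering of $(1,c,a_1,a_1^2,\ldots,a_s,a_s^2)$ by the line through $(1,m_1,a_1,\ldots,a_s^2)$ and $(1,m_2,a_1,\ldots,a_s^2)$. But the step you yourself call ``the main obstacle'' is not merely unfinished; the tool you propose for it provably cannot work. Consider a target $P=(1,c,d_1,d_1',\ldots,d_s,d_s')$ with $c\notin\{m_1,m_2\}$ and tail off $\cP^s$. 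Every point of $A_3$ lies in $H_1$ (equations $X_1=X_2=0$ in the coordinates of the Appendix), so on any line through a point of $A_3$ and a point with $X_1\neq 0$, all points off $H_1$ share the same ratio $X_2/X_1$. Hence if such a line were a bisecant of $K_2^*(m_1,m_2)$ through $P$, its second cap point would have $X_2/X_1=c$; but every point of $K_2^*(m_1,m_2)$ outside $L_1$ lies in $A_1^{(2s+1)}$ and has $X_2/X_1\in\{m_1,m_2\}$ (the apex, the only cap point with ratio $0$, has been deleted). So no bisecant meeting $A_3$ reaches $P$: these points \emph{must} be covered by bisecants inside $A_1\cup A_2$, which is exactly the tool you discarded.

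Moreover, the reason you discarded it is mistaken. For a bisecant joining $(1,m_1,a_1,a_1^2,\ldots)$ to $(1,m_2,a_1',a_1'^2,\ldots)$ through $P$, the coefficients $\lambda,\mu$ with $\lambda+\mu=1$ are indeed pinned by $c$, but in characteristic $2$ the coordinate blocks then \emph{decouple}: squaring $\lambda a_u+\mu a_u'=d_u$ and combining with $\lambda a_u^2+\mu a_u'^2=d_u'$ gives $\lambda a_u^2=d_u^2+\mu d_u'$, which has a unique solution $a_u$ in each block. Far from being rigid, such cross-family bisecants cover every affine point except the degenerate targets where the solution forces $a_u=0$ for all $u$ or $a_u'=0$ for all $u$ (equivalently $d_u'=d_u^2/t_m$ or $d_u'=d_u^2/(t_m+1)$ for all $u$); this is precisely the covering property of the translation cap $C^{(s)}_{m_1,m_2}$ of Lemma \ref{GI2} (\cite[Prop.~2.9]{Gi}), invoked through (\ref{link1}). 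The residual cone points are then handled by mixed $A_1$--$A_2$ bisecants ($P=P_1+s_mP_3$ and $P=P_2+r_mP_3$ with $P_3\in A_2$), and it is exactly there that the hypotheses $m_i\neq 1$ and $(m_1+m_2)^3\neq 1$ are consumed. Similarly, for targets $(0,1,b_1,b_1',\ldots,b_s,b_s')$ off $\cP^s$ the paper needs no recourse to $A_3$: by Lemma \ref{GI1} there pass $\frac{q-2}{2}>1$ secants of $\cP^s=A_2^{(2s+1)}\cup\{(0,1,0,\ldots,0)\}$ through such a point, hence at least one avoiding $(0,1,0,\ldots,0)$, and this is where $q>4$ enters. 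Your argument cannot be completed along the route you describe; it has to be redirected to bisecants within $A_1\cup A_2$ as above.
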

Being quite technical, the proof of Proposition \ref{chiave} is
postponed in the Appendix.

We are now in a position to construct complete caps from products
$(K:\cP^s)$, $K$ being a suitable arc in $PG(2,q)$. Three cases will
be investigated.
\begin{itemize}
\item[{\rm (I)}] $K$ affinely complete.
\item[{\rm (II)}] $K$ complete, $\beta(K)=1$.
\item[{\rm (III)}] $K$ complete, $\beta(K)=1$, $(k-2)p(K)<q-1$.
\end{itemize}

\subsection{Case (I)} By Lemma \ref{primo} we can assume without loss of generality that
\begin{itemize}
\item[{\rm (Ia)}] $K$ is affinely complete, $(0,0,1)$ is covered
by $K$, $1\notin S_\infty(K)$.
\end{itemize}

\begin{lemma}\label{lem0} The cap $(K:\cP^s)$ covers all points in ${PG(2s+2,q)\setminus H_0}$.
\end{lemma}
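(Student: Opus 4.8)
The plan is to normalize each point $R$ of $PG(2s+2,q)\setminus H_0$ in the form $R=(1,r_1,r_2,u_1,v_1,\dots,u_s,v_s)$ and to exhibit two points $P,P'$ of $(K:\cP^s)$ together with scalars $c_1,c_2\in\fq$ such that $R=c_1P+c_2P'$ (with the convention that $R=P$ covers the case where $R$ already lies on the cap). First I would project onto the first three coordinates: the point $\bar R=(1,r_1,r_2)$ is an affine point of $PG(2,q)$, i.e. $\bar R\notin l_\infty$. Since $K$ is affinely complete (assumption (Ia)), either $\bar R\in K$ or $\bar R$ lies on a secant of $K$; in the latter case write $\bar R=c_1\bar P+c_2\bar P'$ with $\bar P,\bar P'$ distinct points of $K$. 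Because $K$ is an affine arc, $\bar P,\bar P'$ are in normalized form $(1,\ast,\ast)$, so comparing first coordinates forces $c_1+c_2=1$; in particular $c_1\ne c_2$, and $c_1,c_2\ne 0$ (if one were $0$ then $\bar R\in K$).

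The decisive step is to lift $\bar P,\bar P'$ to cap points by solving, block by block, the system $u_i=c_1a_i+c_2a_i'$, $v_i=c_1a_i^2+c_2(a_i')^2$ in the unknowns $a_i,a_i'\in\fq$. Eliminating $a_i'=(u_i+c_1a_i)/c_2$ and using that the characteristic is $2$ (so $(u_i+c_1a_i)^2=u_i^2+c_1^2a_i^2$) together with $c_1+c_2=1$, I obtain $c_1a_i^2=c_2v_i+u_i^2$, that is $a_i^2=(c_2v_i+u_i^2)/c_1$. Since $x\mapsto x^2$ is a bijection of $\fq$ for $q$ even, this determines $a_i$ uniquely, and hence $a_i'$; the resulting points $P=(\bar P,a_1,a_1^2,\dots,a_s,a_s^2)$ and $P'=(\bar P',a_1',(a_1')^2,\dots)$ lie in $(K:\cP^s)$ and satisfy $R=c_1P+c_2P'$. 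As $\bar P\ne\bar P'$, automatically $P\ne P'$, so $R$ is covered by the secant $PP'$.

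It remains to treat the case $\bar R\in K$, which is where the only real subtlety lies. Here I would take $\bar P=\bar P'=\bar R$ and choose any $c_1\in\fq\setminus\{0,1\}$, setting $c_2=1+c_1$ (possible since $q\ge 4$); the same elimination again yields $a_i,a_i'$ with $a_i^2=(c_2v_i+u_i^2)/c_1$. The potential obstacle is that now $P$ and $P'$ need not be distinct: a direct check shows that $a_i=a_i'$ holds for every $i$ precisely when $v_i=u_i^2$ for all $i$, in which case $R=(1,r_1,r_2,u_1,u_1^2,\dots,u_s,u_s^2)$ is itself a point of $(K:\cP^s)$. If instead $v_j\ne u_j^2$ for some $j$, then $a_j\ne a_j'$, whence $P\ne P'$ and $R$ lies on the secant $PP'$. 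In all cases $R$ is covered, which proves the lemma. I expect the heart of the argument to be the block computation together with this distinctness analysis; the affine completeness of $K$ enters only to guarantee that $\bar R$ is reachable within the plane.
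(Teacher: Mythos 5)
Your proof is correct and follows essentially the same route as the paper's: normalize the point, use affine completeness of $K$ to decompose the plane projection with coefficients summing to $1$, then lift block by block by solving the linear--quadratic system, which has a solution because squaring is a bijection of $\fq$ in characteristic $2$. The only difference is cosmetic (you solve by elimination rather than via the $2\times 2$ determinant) plus your explicit distinctness check in the case $\bar R\in K$, a point the paper leaves implicit; your version is in fact slightly more careful there.
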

\begin{proof} Let $Q=(1,\alpha,\beta,c_1,c_1',\ldots,c_s,c_{s}')$ be any
point in $PG(2s+2,q)\setminus H_0$. Write
$$
(1,\alpha,\beta)=\gamma(1,a,b)+(\gamma+1) (1,c,d)
$$
with $(1,a,b)$, $(1,c,d)$ in $K$.  Assume first that $\gamma\notin
\{0,1\}$. We look for $\lambda_i$, $\mu_i$ in $\fq$ such that
$$
Q=\gamma(1,a,b,\lambda_1,\lambda_1^2,\ldots,\lambda_s,\lambda_s^2)+(\gamma+1)(
1,c,d,\mu_1,\mu_1^2,\ldots,\mu_s,\mu_s^2),
$$
that is,
$$
\left\{\begin{array}{l} c_i^2=
\gamma^2 \lambda_i^2+(\gamma+1)^2 \mu_i^2\\
c_i'= \gamma \lambda_i^2+(\gamma+1) \mu_i^2
\end{array}\right. \,\,\,\,\,\,\text{ for each }i=1,\ldots,s.
$$
As $\gamma(\gamma+1)\ne 0$, such $\lambda_i,\mu_i$ certainly exist.

We now need to consider the case $\gamma\in \{0,1\}$, that is,
$(1,\alpha,\beta)\in K$.  Fix any $\delta\notin \{0,1\}$. We look
for $\lambda_i$, $\mu_i$ in $\fq$ such that
$$
Q=\delta(1,\alpha,\beta,\lambda_1,\lambda_1^2,\ldots,\lambda_s,\lambda_s^2)+(\delta+1)(
1,\alpha,\beta,\mu_1,\mu_1^2,\ldots,\mu_s,\mu_s^2),
$$
that is,
$$
\left\{\begin{array}{l} c_i^2=
\delta^2 \lambda_i^2+(\delta^2+1) \mu_i^2\\
c_i'= \delta \lambda_i^2+(\delta+1) \mu_i^2
\end{array}\right.\,\,\,\,\,\text{ for each }i=1,\ldots,s.
$$
As $\delta^2(\delta+1)+(\delta^2+1)\delta=\delta(\delta+1)\ne 0$,
such $\lambda_i,\mu_i$ certainly exist.
\end{proof}

\begin{lemma}\label{lem3} The points on $H_0$ covered by $(K:\cP^s)$ are
\begin{eqnarray*}
(0,1,m, a_1,A  a_1^2,\ldots, a_s,A  a_s^2 ),\,\,m\in
Cov_\infty(K),\,\, A \in S_m(K),\,\, a_i\in \fq, \\
(0,0,1, a_1,A a_1^2,\ldots, a_s,A  a_s^2),\,\, A \in
S_\infty(K),\,\,
a_i\in \fq, \\
(0,0,0,\ldots,1,m, a_{l}, m a_{l}^2,\ldots, a_s,m a_s^2), \,\,l\ge
2,\,\,m\in \fq^*,\,\ a_i\in \fq,
\\
(0,0,\ldots,0,1,m), \,\,m\in \fq^*.
\end{eqnarray*}
\end{lemma}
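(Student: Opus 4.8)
The plan is to use that, since $K$ is affinely complete, it is in particular affine, so every point of $(K:\cP^s)$ has a representative with first coordinate $1$; in particular no cap point lies on $H_0$. Writing two distinct cap points as
\[
U=(1,x,y,\alpha_1,\alpha_1^2,\dots,\alpha_s,\alpha_s^2),\qquad V=(1,x',y',\beta_1,\beta_1^2,\dots,\beta_s,\beta_s^2),
\]
with $(1,x,y),(1,x',y')\in K$ and $\alpha_i,\beta_i\in\fq$, a point $\lambda U+\mu V$ lies on $H_0$ precisely when $\lambda+\mu=0$, that is (as $q$ is even) when $\lambda=\mu$. Hence every secant of $(K:\cP^s)$ meets $H_0$ in the single point $U+V$, and the covered points of $H_0$ are exactly the points $U+V$ as $U,V$ range over distinct cap points. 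The whole computation then rests on the identity $\alpha_i^2+\beta_i^2=(\alpha_i+\beta_i)^2$, valid in characteristic $2$, which ensures that after summing and renormalizing, the ``square'' coordinates keep the shape $a_i,Aa_i^2$ of $\cP^s$. (Note that only the affineness of $K$ is used here, not the full hypothesis (Ia).)

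First I would treat the case where $U,V$ project to distinct points of $K$, so that $U+V=(0,x+x',y+y',\gamma_1,\gamma_1^2,\dots,\gamma_s,\gamma_s^2)$ with $\gamma_i=\alpha_i+\beta_i$. If $x\ne x'$, normalizing the second coordinate to $1$ gives $(0,1,m,a_1,Aa_1^2,\dots,a_s,Aa_s^2)$ with $A=x+x'$, $m=(y+y')/(x+x')$, $a_i=\gamma_i/A$; since the line through $(1,x,y),(1,x',y')$ is a secant of $K$ meeting $l_\infty$ at $(0,1,m)$, we get $m\in Cov_\infty(K)$ and $A\in S_m(K)$, yielding the first type. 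If instead $x=x'$ (hence $y\ne y'$), the third coordinate $y+y'$ is nonzero and lies in $S_\infty(K)$, and normalizing it to $1$ produces the second type.

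Next I would treat the case where $U,V$ project to the same point of $K$: the $PG(2,q)$-part cancels and $U+V=(0,0,0,\gamma_1,\gamma_1^2,\dots,\gamma_s,\gamma_s^2)$ with $(\gamma_1,\dots,\gamma_s)\ne(0,\dots,0)$. Letting $j$ be the least index with $\gamma_j\ne0$ and normalizing that coordinate to $1$ gives the third type, with $m=\gamma_j$ and $l=j+1\ge 2$; when $j=s$ this degenerates to the fourth type. For the reverse inclusion I would verify that every listed point arises: given the relevant parameter one selects the pair of points of $K$ witnessing membership in $S_m(K)$ or $S_\infty(K)$ (or a single point of $K$ taken twice, for the last two types) and chooses $\alpha_i,\beta_i$ with prescribed sums, so that the $a_i$ realize any assigned values in $\fq$.

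The argument is essentially bookkeeping, and the conceptual input is light: the reduction of ``covered'' on $H_0$ to the single intersection point $U+V$, together with $(\alpha_i+\beta_i)^2=\alpha_i^2+\beta_i^2$. The main obstacle I anticipate is purely organizational, namely the exhaustiveness of the case split and the correct matching of the degenerate normalizations with types three and four, i.e. carefully tracking which block of $\cP^s$-coordinates first becomes nonzero and confirming in each subcase that the distinguished coordinate ($x+x'$, $y+y'$, or $\gamma_j$) is genuinely nonzero so that the renormalization is legitimate.
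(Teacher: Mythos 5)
Your proposal is correct and follows essentially the same route as the paper: both reduce the question to the single intersection point $U+V$ of each secant with $H_0$, use the characteristic-$2$ identity $(\alpha+\beta)^2=\alpha^2+\beta^2$, and split into the same three cases ($x\ne x'$, $x=x'$ with $y\ne y'$, and equal projections to $K$) to match the four listed types via $Cov_\infty(K)$, $S_m(K)$, and $S_\infty(K)$. Your explicit treatment of the reverse inclusion is a small extra care that the paper leaves implicit, but it does not change the approach.
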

\begin{proof}
Let $Q_1=(1,a,b,\lambda_1,\lambda_1^2,\ldots,\lambda_s,\lambda_s^2)$
and $Q_2=(1,c,d,\mu_1,\mu_1^2,\ldots,\mu_s,\mu_s^2)$ be two points
in $(K:\cP^s)$. The line through $Q_1$ and $Q_2$ meets $H_0$ in
$$
Q=(0,a+c,b+d,(\lambda_1+\mu_1),(\lambda_1+\mu_1)^2,\ldots,(\lambda_s+\mu_s),(\lambda_s+\mu_s)^2).
$$
Assume that $a+c\ne 0$. Let $m=\frac{b+d}{a+c}$, $
a_i=(\lambda_i+\mu_i)/(a+c)$. Then
$$
Q=(0,1,m, a_1,(a+c) a_1^2,\ldots, a_s,(a+c) a_s^2).
$$
If $a+c= 0$, $b+d\ne 0$, let $ a_i=(\lambda_i+\mu_i)/(b+d)$. Then
$$
Q=(0,0,1, a_1,(b+d) a_1^2,\ldots, a_s,(b+d) a_s^2).
$$
Finally, assume that $a+b=0,c+d=0$. Let $l$ be the minimum integer
for which $\lambda_l+\mu_l\ne 0$, and let $m=\lambda_l+\mu_l$. If
$l<s$, let $ a_i=(\lambda_i+\mu_i)/(\lambda_l+\mu_l)$. Then
$$
Q=(0,0,0,\ldots,1,m, a_{l+1}, m a_{l+1}^2,\ldots, a_s,m  a_s^2).
$$
If $l=s$, then $Q=(0,0,\ldots,0,1,m)$.
\end{proof}

 Let
$m_1$, $m_2$  be as in Lemma \ref{secondo}. Let $K_2(m_1,m_2)$ be
the natural embedding of $K_2^*(m_1,m_2)$ in $H_0\subset
PG(2s+2,q)$.

\begin{proposition}\label{PROP0} Assume that  $k<q-5$.
Then $$\cK=(K:\cP^s)\cup K_2(m_1,m_2)\,\,\,\text{ is a cap }.$$
\end{proposition}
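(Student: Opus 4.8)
The plan is to show that the two pieces of $\cK$ are separately caps lying in complementary regions of $PG(2s+2,q)$, so that the only conceivable collinear triple consists of two points of $(K:\cP^s)$ together with one point of $K_2(m_1,m_2)$, and then to rule this out by comparing Lemma~\ref{lem3} with the explicit description~(\ref{m1m2}) of $K^{(2s+1)}_{m_1,m_2}$. First, $(K:\cP^s)$ is a cap by the product construction, since $K$ is an arc (hence a cap) in $PG(2,q)$ and $\cP^s$ is a cap in $AG(2s,q)$; as $K$ is affine in Case~(Ia), every point of $(K:\cP^s)$ has first coordinate $X_0=1$ and thus lies off $H_0$. On the other hand, $K_2(m_1,m_2)$ is the image in the hyperplane $H_0$ of the cap $K_2^*(m_1,m_2)$ of Proposition~\ref{chiave}, hence it is itself a cap, and it is contained in $H_0$.

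Next I would run the collinearity case analysis. Three points of $(K:\cP^s)$ cannot be collinear, nor can three points of $K_2(m_1,m_2)$, each set being a cap. If two of the three points lie in $K_2(m_1,m_2)\subset H_0$, the line joining them lies in $H_0$, forcing the third point into $H_0$ as well; but every point of $(K:\cP^s)$ lies off $H_0$, so this cannot occur. Hence the only case left is two points $Q_1,Q_2\in(K:\cP^s)$ collinear with a point $R\in K_2(m_1,m_2)$. Since $Q_1,Q_2\notin H_0$, the line $Q_1Q_2$ meets $H_0$ in exactly one point, which by Lemma~\ref{lem3} is one of the four explicitly listed ``shadow'' families. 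It therefore suffices to prove that no shadow point belongs to $K_2(m_1,m_2)$.

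For this disjointness I would argue by the index $j$ of the first nonzero coordinate $X_j$, matching each shadow family against the points of $K_2(m_1,m_2)$ having the same $j$, as read off from~(\ref{m1m2}). A shadow $(0,1,m,\dots)$ (resp.\ $(0,0,1,\dots)$) can only meet $A_1^{(2s+1)}$ (resp.\ $A_2^{(2s+1)}$); equality would force the pair multiplier $A$ to satisfy $A=1$, which is excluded because $A\in S_m(K)$ with $m\in\{m_1,m_2\}$ and $1\notin S_{m_1}(K)\cup S_{m_2}(K)$ by Lemma~\ref{secondo} (resp.\ because $A\in S_\infty(K)$ and $1\notin S_\infty(K)$ by Case~(Ia)). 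With $A\ne1$ and $q$ even, the relations $Aa_i^2=a_i^2$ force all $a_i=0$, contradicting the condition $(a_1,\dots,a_s)\ne(0,\dots,0)$ in $A_1^{(2s+1)}$ and $A_2^{(2s+1)}$. A shadow with leading zeros beyond $X_2$ has $j\ge3$, and the only points of $K_2(m_1,m_2)$ with such $j$ live in the nested component $A_3^{(2s+1)}$, i.e.\ in a lower copy $K^{(2(s-l+1)+1)}_{m_1,m_2}$. There the coordinate following the leading $1$ equals $m\in\fq^\ast$; this excludes the point $(1,0,\dots,0)$ of that copy (whose following coordinate is $0$), while matching the $A_1$-part of that copy would require both $m\in\{m_1,m_2\}$ and $m=1$, impossible since $m_1,m_2\ne1$. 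Finally the deepest shadow $(0,\dots,0,1,m)$ can only meet the point arising from $(1,0)\in K^{(1)}_{m_1,m_2}$, whose last coordinate is $0\ne m$.

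The main obstacle is the bookkeeping in this last step: one must keep the recursion in~(\ref{m1m2}) straight and identify correctly, for each $j$, exactly which points of $K_2(m_1,m_2)$ have $X_j$ as first nonzero coordinate. This is purely combinatorial and can be made rigorous either by the index analysis sketched above or, more cleanly, by induction on $s$ via the identification $A_3^{(2s+1)}\cong K^{(2s-1)}_{m_1,m_2}$; the only arithmetic inputs are the three facts $1\notin S_\infty(K)$, $1\notin S_{m_1}(K)\cup S_{m_2}(K)$, and $m_1,m_2\ne1$, all guaranteed by Case~(Ia) and the choice of $m_1,m_2$ in Lemma~\ref{secondo}.
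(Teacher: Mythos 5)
Your proof is correct and follows essentially the same route as the paper's: both reduce the problem to showing that the ``shadow'' points of $(K:\cP^s)$ on $H_0$ listed in Lemma~\ref{lem3} are disjoint from $K_2(m_1,m_2)$ (using $1\notin S_{m_1}(K)\cup S_{m_2}(K)\cup S_\infty(K)$ and $m_1,m_2\neq 1$), while the reverse direction is immediate since $K_2(m_1,m_2)\subset H_0$ and $(K:\cP^s)\cap H_0=\emptyset$. The only difference is one of detail: the paper compresses the shadow-versus-$K^{(2s+1)}_{m_1,m_2}$ matching into a single appeal to Lemma~\ref{lem3}, whereas you carry out the index-by-index bookkeeping against the recursive structure~(\ref{m1m2}) explicitly.
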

\begin{proof} Note that
$ 1\notin (S_{m_1}(K)\cup S_{m_2}(K)\cup S_{\infty}(K)) $. Then by
Lemma \ref{lem3}  no point in $K_2(m_1,m_2)$ is covered by
$(K:\cP^s)$; the converse is also true as $K_2(m_1,m_2)\subset H_0$
and $(K:\cP^s)\cap H_0=\emptyset$. Then no three points in
$(K:\cP^s)\cup K_2(m_1,m_2)$ are collinear.
\end{proof}
\begin{theorem}\label{TEO1} Let $M=2s+2$, $s\ge 1$, $q>8$. Assume that {\rm{(Ia)}} holds and $k<q-5$.
Let $\cK$ be the cap $$\cK=(K:\cP^s)\cup K_2(m_1,m_2)\subset
PG(M,q).$$ Then the size of $\cK$ is $$(k+3)\cdot q^\frac{M-2}{2}
+3(q^\frac{M-4}{2}+q^\frac{M-6}{2}+\ldots+q)-M+3.$$ Moreover,
\begin{itemize}
\item if $Cov_\infty(K)=\fq$, then $\cK$ is a complete cap;

\item if $\fq\setminus Cov_\infty(K)=\{m_0\}$, then
$$
\cK'=\cK\cup \{(0,1,m_0,0,\ldots, 0)\}
$$
is a complete cap;

\item if $\fq\setminus Cov_\infty(K)\supseteq \{m_0,m_0'\}$, then
$$
\cK'=\cK\cup \{(0,1,m_0,0,\ldots, 0),(0,1,m_0',0,\ldots, 0)\}
$$
is a complete cap.
\end{itemize}

\end{theorem}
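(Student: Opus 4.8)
The plan is to take the cap property of $\cK$ for granted, since it is exactly Proposition \ref{PROP0}, and to prove separately the cardinality formula and the three completeness claims. The heart of the argument is a clean description of which points of $PG(M,q)$ fail to be covered by $\cK$, which I would extract by combining Lemma \ref{lem0}, Lemma \ref{lem3} and Proposition \ref{chiave}.

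For the size, I would note that $(K:\cP^s)$ has $k\cdot q^s$ points ($k$ representatives of $K$ and $q^s$ points of $\cP^s$), and count $K_2(m_1,m_2)$ from its recursive definition (\ref{m1m2}). Writing $f(i)=|K^{(2i+1)}_{m_1,m_2}|$, the decomposition into $A_1^{(2i+1)},A_2^{(2i+1)},A_3^{(2i+1)}$ together with the extra point gives $|A_1^{(2i+1)}|=2(q^i-1)$, $|A_2^{(2i+1)}|=q^i-1$, $|A_3^{(2i+1)}|=f(i-1)$, whence $f(i)=3q^i-2+f(i-1)$ with $f(0)=2$. Summing yields $f(s)=3(q+\cdots+q^s)-2s+2$, so $|K_2(m_1,m_2)|=f(s)-1$. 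Since $(K:\cP^s)\cap H_0=\emptyset$ while $K_2(m_1,m_2)\subset H_0$, the two pieces are disjoint and $|\cK|=kq^s+3(q+\cdots+q^s)-2s+1$, which is exactly the claimed expression after substituting $M=2s+2$.

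For completeness, I would first observe that by Lemma \ref{lem0} every point off $H_0$ is covered, so any uncovered point lies in $H_0$. A point of $H_0\setminus\cK$ can only be covered by a secant contained in $H_0$: a mixed secant joining a point of $(K:\cP^s)$ to a point of $K_2(m_1,m_2)$ meets the hyperplane $H_0$ only in its $K_2$-endpoint (because $(K:\cP^s)\cap H_0=\emptyset$), hence covers no further point of $H_0$. By Proposition \ref{chiave} the secants internal to $K_2(m_1,m_2)$ cover all of $H_0$ except the points $(0,1,m,0,\ldots,0)$, $m\in\fq$. Among these exceptional points, the secants internal to $(K:\cP^s)$ cover, via the first family of Lemma \ref{lem3} with all $a_i=0$, exactly those with $m\in Cov_\infty(K)$ (using that $m\in Cov_\infty(K)$ is equivalent to $S_m(K)\ne\emptyset$, which holds because $K$ is affine, so every covering secant of $(0,1,m)$ joins two affine points of slope $m$). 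Combining, the points of $PG(M,q)$ not covered by $\cK$ are precisely $(0,1,m,0,\ldots,0)$ with $m\in\fq\setminus Cov_\infty(K)$, and none of them lies in $\cK$ since no point of $K_2^*(m_1,m_2)$ has the shape $(*,*,0,\ldots,0)$.

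The three cases then follow readily. If $Cov_\infty(K)=\fq$ there are no uncovered points, so $\cK$ is complete. If there is a single uncovered direction $m_0$, then $(0,1,m_0,0,\ldots,0)$ is uncovered by $\cK$, so it lies on no secant of $\cK$ and $\cK'$ is again a cap, now complete. If at least two directions $m_0,m_0'$ are uncovered, I would use that the line through $(0,1,m_0,0,\ldots,0)$ and $(0,1,m_0',0,\ldots,0)$ is the set $\{(0,\lambda,\mu,0,\ldots,0)\}$, which passes through every $(0,1,m,0,\ldots,0)$; adding these two points therefore covers all remaining uncovered points simultaneously, and the cap property is preserved since each added point is uncovered by $\cK$ and this line contains no further point of $\cK$. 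The main obstacle is not any single hard computation but the bookkeeping in the previous paragraph: one must carefully match the full list of covered points in Lemma \ref{lem3} against the single exceptional family of Proposition \ref{chiave} and confirm that the mixed secants contribute nothing new on $H_0$; everything else is routine verification.
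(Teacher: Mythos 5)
Your proposal is correct and follows essentially the same route as the paper's own proof: Lemma \ref{lem0} for points off $H_0$, Proposition \ref{chiave} for points of $H_0$ outside the exceptional family, and Lemma \ref{lem3} for the points $(0,1,m,0,\ldots,0)$ with $m\in Cov_\infty(K)$, with the cap property delegated to Proposition \ref{PROP0}. The paper compresses the size count and the three bullet cases into ``straightforward computation'' and ``the claim follows,'' whereas you supply those details (the recursion for $|K^{(2i+1)}_{m_1,m_2}|$, the observation that mixed secants add nothing on $H_0$, and the line argument for the two-point extension), all of which check out.
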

\begin{proof} The claim on the size of $\cK$ follows from
straightforward computation. The points in $PG(M,q)\setminus H_0$
are covered by $(K:\cP^s)$ according to Lemma \ref{lem0}.  Points of
$H_0$ not of type $(0,1,m,0,\ldots,0)$ are covered by $K_2(m_1,m_2)$
according to Proposition \ref{chiave}. Points in $H_0$ of type
$(0,1,m,0,\ldots,0)$, $m\in Cov_\infty(K)$, are covered by
$(K:\cP^s)$ according to Lemma \ref{lem3}. Then the claim follows.
\end{proof}

\subsection{Case (II)} By Lemma \ref{Lem_Sinfnot1} we can assume without loss of generality that
\begin{itemize}
\item[{\rm (IIa)}] $K$ is complete, $\beta(K)=1$, the only
sum-point for $K$ is $(0,0,1)$, $1\notin S_\infty(K)$.
\end{itemize}

\begin{lemma}\label{lem0B} The cap $(K:\cP^s)$ covers all points in $PG(2s+2,q)\setminus H_1$.
\end{lemma}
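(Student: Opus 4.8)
The plan is to show that $(K:\cP^s)$ covers every point of $PG(2s+2,q)\setminus H_1$, following the same strategy as Lemma \ref{lem0} but restricting attention to the difference $H_0\setminus H_1$, since all points outside $H_0$ were already handled by Lemma \ref{lem0}. Recall $H_0$ has equation $X_0=0$ and $H_1$ has equations $X_0=X_1=0$, so $H_0\setminus H_1$ consists precisely of the points of the form $(0,1,\beta,c_1,c_1',\ldots,c_s,c_s')$. Thus it suffices to cover each such point by a secant of $(K:\cP^s)$.

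First I would write a generic target point as $Q=(0,1,\beta,c_1,c_1',\ldots,c_s,c_s')$ and look for two points $Q_1=(1,a,b,\lambda_1,\lambda_1^2,\ldots,\lambda_s,\lambda_s^2)$ and $Q_2=(1,c,d,\mu_1,\mu_1^2,\ldots,\mu_s,\mu_s^2)$ of $(K:\cP^s)$ on a common secant through $Q$. Since the first coordinate of $Q$ vanishes, the line through $Q_1$ and $Q_2$ meets $H_0$ in $Q_1+Q_2$, so I need $a+c=1$ (to match the second coordinate after normalization, using that $Q$ has $X_1=1$) together with $b+d=\beta$ and $\lambda_i+\mu_i=c_i$ and $(\lambda_i+\mu_i)^2=c_i'$ for each $i$. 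The first constraint is the key geometric one: I must find two points $(1,a,b),(1,c,d)$ of $K$ with $a+c\ne 0$ (so that after rescaling the second coordinate becomes any prescribed value) whose connecting line in $PG(2,q)$ covers the planar point $(0,1,\beta/(a+c))$. Here is where completeness of $K$ and the hypothesis that $\beta(K)=1$ with sum-point $(0,0,1)$ enters: every point of $l_\infty$ except the single sum-point $(0,0,1)$ lies on a secant of $K$, and the points of $l_\infty$ with $X_1\ne 0$ are exactly those of the form $(0,1,m)$. So every point $(0,1,m)$, $m\in\fq$, is covered by some bisecant of $K$ meeting $l_\infty$ in a pair with $a+c\ne 0$.

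Once the pair $(1,a,b),(1,c,d)$ with $a+c\ne0$ and $(b+d)/(a+c)=\beta$ is secured, I set $a_i=c_i/(a+c)$ and must solve, for each $i$,
\[
\lambda_i+\mu_i=(a+c)\,a_i=c_i,\qquad (\lambda_i+\mu_i)^2=c_i',
\]
which is automatically consistent precisely when the input point actually lies on the cap's span, i.e. when $c_i'$ is compatible; the genuine freedom is that, given the prescribed sum $\lambda_i+\mu_i=c_i$ and the requirement that both $(\lambda_i,\lambda_i^2)$ and $(\mu_i,\mu_i^2)$ lie in $\cP^1$, one may choose $\lambda_i$ freely and set $\mu_i=c_i+\lambda_i$, with the squares matching automatically in characteristic $2$ since $(\lambda_i+\mu_i)^2=\lambda_i^2+\mu_i^2$. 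This linear-over-$\fq$ solvability is routine and parallels the computation in Lemma \ref{lem0}.

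The main obstacle is the exceptional planar point $(0,0,1)$, the unique sum-point, which is not covered by any secant of $K$; this is exactly why $H_1$ (rather than $H_0$) must be excluded from the claim. Concretely, the points of $H_0$ with $X_1=0$ cannot in general be reached by pairs with $a+c\ne0$, because to cover $(0,0,1)$ one would need $a+c=0$, and such pairs give points with $\lambda_i+\mu_i$ forced in a way that does not attain arbitrary $(c_1,\ldots,c_s)$. Hence the statement is sharp: the construction leaves $H_1$ uncovered, to be dealt with separately (as in the Case (I) treatment, via the embedded cap $K_2(m_1,m_2)$ in the subsequent propositions). So the proof reduces to the single assertion that every $(0,1,m)$ is covered, which is immediate from (IIa), and the remainder is the same coordinate bookkeeping already carried out in Lemma \ref{lem0}.
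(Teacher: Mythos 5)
There are two genuine gaps, one in each half of your reduction. First, your opening claim that ``all points outside $H_0$ were already handled by Lemma \ref{lem0}'' is not available here. Lemma \ref{lem0} is proved under hypothesis (Ia), where $K$ is an \emph{affinely complete} affine arc; the present lemma operates under (IIa), where $K$ is a complete arc with $\beta(K)=1$, so $K$ necessarily contains two points of $l_\infty$ and its affine part need \emph{not} be affinely complete (the paper notes after Lemma \ref{Lem_affin+2} that such arcs exist for $q=8,16$). Consequently an affine plane point $(1,\alpha,\beta)$ may be covered only by secants of $K$ passing through a point of $K$ at infinity, and your scheme, which only ever uses pairs of affine points of $K$, cannot reach it. The paper's proof instead treats all of $PG(2s+2,q)\setminus H_1$ uniformly: the normalized plane point $(\delta,\alpha,\beta)\neq(0,0,1)$ is not the sum-point, so by completeness it either lies in $K$ or lies on a secant of $K$ with coefficients $\gamma_1\neq\gamma_2$, both nonzero; the fiber system in $(\lambda_i^2,\mu_i^2)$ then has determinant $\gamma_1\gamma_2(\gamma_1+\gamma_2)\neq 0$ and is solvable exactly as in Lemma \ref{lem0}.

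Second, and more seriously, your treatment of $H_0\setminus H_1$ breaks down. If $Q_1=(1,a,b,\lambda_1,\lambda_1^2,\ldots,\lambda_s,\lambda_s^2)$ and $Q_2=(1,c,d,\mu_1,\mu_1^2,\ldots,\mu_s,\mu_s^2)$ are cap points over \emph{affine} points of $K$, the unique point of their secant lying in $H_0$ is $Q_1+Q_2$, which after normalization reads $(0,1,m,a_1,(a+c)a_1^2,\ldots,a_s,(a+c)a_s^2)$ --- precisely the thin set described in Lemma \ref{lem3}. Such secants therefore cover only the points of $H_0$ satisfying $c_i'=(a+c)\,c_i^2$ for all $i$ (with your normalization $a+c=1$, even just $c_i'=c_i^2$), not an arbitrary $(0,1,\beta,c_1,c_1',\ldots,c_s,c_s')$. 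You actually notice this obstruction (``when $c_i'$ is compatible'') and then wave it away: choosing $\mu_i=c_i+\lambda_i$ makes $\lambda_i^2+\mu_i^2$ equal to $c_i^2$, not to the independently prescribed $c_i'$. The missing ingredient is exactly what distinguishes case (II) from case (I): since $\beta(K)=1$ forces $l_\infty$ to be a secant of $K$, the cap $(K:\cP^s)$ contains the points $(0,1,f,\lambda_1,\lambda_1^2,\ldots)$ and $(0,1,g,\mu_1,\mu_1^2,\ldots)$ over the two infinite points of $K$, and it is the secants through \emph{these} points (with coefficients $\gamma_1+\gamma_2=1$, $\gamma_1\gamma_2\neq 0$, hence $\gamma_1\neq\gamma_2$ automatically in characteristic $2$) that cover the general point of $H_0\setminus H_1$; this is subsumed in the paper's uniform argument above.
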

\begin{proof}
Let $Q=(\delta,\alpha,\beta,c_1,c_1',\ldots,c_s,c_{s}')$ be any
point in $PG(2s+2,q)\setminus H_1$, where $(\delta,\alpha,\beta)$ is
written in its normalized form. As $(\delta,\alpha,\beta)\neq
(0,0,1)$ there exists $\gamma_1\neq \gamma_2$ such that
$$
(\delta,\alpha,\beta)=\gamma_1 (Y_0,Y_1,Y_2)+\gamma_2
(Y_0',Y_1',Y_2')
$$
with $(Y_0,Y_1,Y_2)$, $(Y_0',Y_1',Y_2')$ in $K$. The existence of
$\lambda_i$, $\mu_i$ in $\fq$ such that
$$
Q=\gamma_1(Y_0,Y_1,Y_2,\lambda_1,\lambda_1^2,\ldots,\lambda_s,\lambda_s^2)+\gamma_2(
Y_0',Y_1',Y_2',\mu_1,\mu_1^2,\ldots,\mu_s,\mu_s^2),
$$
can then be proved as in the proof of Lemma \ref{lem0}.
\end{proof}

\begin{lemma}\label{lem3B}
The points on $H_1$ covered by $(K:\cP^s)$ are
\begin{eqnarray}
(0,0,1,a _{1},Aa _{1}^{2},\ldots ,a _{s},Aa _{s}^{2}),\quad A
&\in & S_{\infty }(K),a _{i}\in \fq.  \label{form_coverK*M_top} \\
(0,0,0,\ldots ,1,m,a _{l},ma _{l}^{2},\ldots ,a _{s},ma
_{s}^{2}),\quad l &\geq &2,m\in \fq^{\ast },a _{i}\in \fq.
\label{form_coverK*M_bot} \\
(0,0,0\ldots ,,0,0,1,m),\quad m &\in &\fq^{\ast }.
\label{form_coverK*M_bot2}
\end{eqnarray}%
\end{lemma}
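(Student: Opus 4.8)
The plan is to compute, for two arbitrary points $Q_1,Q_2\in(K:\cP^s)$, the point where the line $\overline{Q_1Q_2}$ meets $H_1$, and then to read off which points of $H_1$ actually arise. Write $Q_1=(Y_0,Y_1,Y_2,\lambda_1,\lambda_1^2,\ldots,\lambda_s,\lambda_s^2)$ and $Q_2=(Y_0',Y_1',Y_2',\mu_1,\mu_1^2,\ldots,\mu_s,\mu_s^2)$ with $(Y_0,Y_1,Y_2),(Y_0',Y_1',Y_2')\in K$. Since $q$ is even, the line through $Q_1$ and $Q_2$ meets the hyperplane $H_1$ of equation $X_0=X_1=0$ exactly when the first two coordinates of $Q_1+Q_2$ vanish, i.e. when $Y_0=Y_0'$ and $Y_1=Y_1'$; the intersection point is then $Q_1+Q_2$, which has the shape
\begin{equation*}
(0,0,Y_2+Y_2',\lambda_1+\mu_1,(\lambda_1+\mu_1)^2,\ldots,\lambda_s+\mu_s,(\lambda_s+\mu_s)^2).
\end{equation*}
This is the same normalization bookkeeping already carried out in the proof of Lemma \ref{lem3}, so I would cite that computation rather than redo it in full.

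From here I would split into the three cases according to the value of $Y_2+Y_2'$ and of the $\lambda_i+\mu_i$. First, if $Y_2+Y_2'\neq 0$, then since $(Y_0,Y_1,Y_2),(Y_0',Y_1',Y_2')\in K$ share their first two coordinates and differ in the third, the quantity $Y_2+Y_2'$ is by definition an element of $S_\infty(K)$; setting $A=Y_2+Y_2'$ and $a_i=(\lambda_i+\mu_i)/A$ rescales the point to the form \eqref{form_coverK*M_top}. Second, if $Y_2+Y_2'=0$ (so the two points of $K$ coincide in all three of their $PG(2,q)$-coordinates, forcing $(Y_0,Y_1,Y_2)=(Y_0',Y_1',Y_2')$ and hence a genuine pair of distinct points of $\cP^s$ above a single point of $K$), let $l$ be the least index with $\lambda_l+\mu_l\neq 0$ and put $m=\lambda_l+\mu_l\in\fq^\ast$; rescaling by $m$ yields \eqref{form_coverK*M_bot} when $l<s$, and the degenerate point \eqref{form_coverK*M_bot2} when $l=s$. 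This is precisely the case analysis of Lemma \ref{lem3}, with the only difference that the hyperplane is now $H_1$ rather than $H_0$, so the leading coordinate block $(0,0,\cdot)$ is forced and the $(0,1,m,\ldots)$ family of Lemma \ref{lem3} cannot occur.

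Conversely I would note that every point listed is genuinely covered: given $A\in S_\infty(K)$ one chooses a witnessing pair of points of $K$ and arbitrary lifts $\lambda_i,\mu_i$ with prescribed sums to realize \eqref{form_coverK*M_top}, and similarly for the lower strata one lifts two points of $\cP^s$ over a common point of $K$. The main subtlety — and the only place assumption \textrm{(IIa)} is used — is in confirming that the exceptional sum-point $(0,0,1)$ does not secretly contribute extra covered points: a pair of points of $K$ collinear with $(0,0,1)$ has equal coefficients $c_1=c_2$, but here the coefficients are $\gamma_1,\gamma_2$ coming from the line $\overline{Q_1Q_2}$, and the condition $Y_0=Y_0',\,Y_1=Y_1'$ that places the intersection on $H_1$ forces exactly the pairs counted by $S_\infty(K)$. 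The bookkeeping that no spurious family appears is routine but is the step most worth stating carefully, so I would present it by explicit reduction to Lemma \ref{lem3} and leave the arithmetic of the rescalings implicit.
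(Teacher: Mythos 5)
Your proof is correct and is essentially the paper's own: the paper proves Lemma \ref{lem3B} by a one-line reference to the computation in Lemma \ref{lem3}, which is exactly the adaptation you spell out (the secant through $Q_1,Q_2$ meets $H_1$ precisely when the two underlying points of $K$ agree in their first two normalized coordinates, the intersection point is then $Q_1+Q_2$, and the case split on $Y_2+Y_2'$ yields the three families, with $S_\infty(K)$ by its definition capturing both pairs of affine points and the pair of points of $K$ on $l_\infty$). Two cosmetic remarks: $H_1$ is a codimension-two subspace of $PG(2s+2,q)$, not a hyperplane; and the role of (IIa) here is simply that $(0,0,1)\notin K$ (it is the sum-point, hence outside $K$), so no point of $(K:\cP^s)$ lies on $H_1$ and every secant meets $H_1$ in at most the single point $Q_1+Q_2$ — your closing paragraph gestures at this but conflates it with the coefficient argument of Lemma \ref{lem0B}, where the sum-point hypothesis is genuinely needed.
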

\begin{proof} The proof is similar to that of Lemma \ref{lem3}.
\end{proof}

Let $m_1$ and $m_2$ be as in Lemma \ref{secondo}. Let ${\bar
K}_2^*(m_1,m_2)=K_{m_1,m_2}^{(2s-1)}\setminus \{(1,0,\ldots,0)\}$,
and let  ${\bar K}_2(m_1,m_2)$ be the natural embedding of ${\bar
K}_2^*(m_1,m_2)$ in the subspace $H_2$ of $PG(2s+2,q)$ of equations
$X_0=X_1=X_2=0$.
\begin{proposition}\label{kappa0} Assume that  $k<q-5$. Let $m_1$, $m_2$  be as in Lemma \ref{secondo}. Then
the set
$$
\cK:=(K:\cP^s)\cup {\bar K}_2(m_1,m_2)\cup
\{(0,0,1,a_1,a_1^2,\ldots,a_s,a_s^2)\mid a_i\in \fq\}$$ is a cap.
\end{proposition}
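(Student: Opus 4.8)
The plan is to show that no three points of $\cK$ are collinear by analyzing how the three constituent pieces interact, exploiting that they lie in a controlled nested arrangement of subspaces. Write
\[
\cP_0=\{(0,0,1,a_1,a_1^2,\ldots,a_s,a_s^2)\mid a_i\in\fq\},
\]
so that $\cK=(K:\cP^s)\cup\bar K_2(m_1,m_2)\cup\cP_0$. The set $(K:\cP^s)$ lies entirely off $H_1$ (its points have $(X_0,X_1)\ne(0,0)$ since the first two coordinates come from a point of $K\subset PG(2,q)$, at least one of which is nonzero), whereas both $\bar K_2(m_1,m_2)$ and $\cP_0$ lie inside $H_1$ (they have $X_0=X_1=0$). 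Thus $H_1$ separates $(K:\cP^s)$ from the other two pieces, and this is the structural fact I would use repeatedly.

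First I would record that each of the three pieces is individually a cap: $(K:\cP^s)$ is a cap by the product construction (Proposition \ref{EDE}), $\bar K_2(m_1,m_2)$ is a cap by Proposition \ref{chiave} applied in dimension $2s-1$, and $\cP_0$ is an affine image of $\cP^s$, hence a cap by \cite{PS}. It then remains to rule out collinear triples that straddle two or three of the pieces. I would split this into cases by how many of the three points lie in $H_1$. If exactly one of the three points lies off $H_1$ (i.e. in $(K:\cP^s)$) and the other two lie in $H_1$, no collinearity is possible, since a line through one external point and one point of $H_1$ meets $H_1$ in a single point. The genuinely substantive cases are: (a) two points in $(K:\cP^s)$ and one in $H_1$; and (b) all three points inside $H_1$, distributed between $\bar K_2(m_1,m_2)$ and $\cP_0$.

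For case (a), the line joining two points of $(K:\cP^s)$ meets $H_0\supset H_1$ in exactly the points catalogued in Lemma \ref{lem3B}. A point of this line actually lands in $H_1$ precisely when it is of the forms \eqref{form_coverK*M_top}, \eqref{form_coverK*M_bot}, or \eqref{form_coverK*M_bot2}. I would then check that none of these coincides with a point of $\bar K_2(m_1,m_2)\cup\cP_0$. The points \eqref{form_coverK*M_top} have the shape $(0,0,1,a_1,Aa_1^2,\ldots)$ with $A\in S_\infty(K)$; since $1\notin S_\infty(K)$ by \textrm{(IIa)}, these never meet $\cP_0$ (where the ``slope'' is $A=1$), and a direct comparison shows they avoid $\bar K_2(m_1,m_2)$ as well because the latter sits in $H_2$ (third coordinate $0$). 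The points \eqref{form_coverK*M_bot} and \eqref{form_coverK*M_bot2} lie in $H_2$ and carry slope $m\in\fq^\ast$; I would match them against $\bar K_2(m_1,m_2)$ and use the defining conditions $m_i\ne 1$, $m_1\ne m_2$ together with $1\notin S_{m_1}(K)\cup S_{m_2}(K)$ (Lemma \ref{secondo}) to exclude coincidences. For case (b), since $\bar K_2(m_1,m_2)\subset H_2$ while $\cP_0$ meets $H_2$ only in $\cP_0\cap H_2$, I would argue that a line with two points in $\bar K_2(m_1,m_2)$ stays in $H_2$, so its third point, if in $\cP_0$, would force $A=1$ again in the $\cP_0$-parametrization, contradicting the slopes $m_1,m_2\ne 1$ built into $\bar K_2$; a line with two points of $\cP_0$ and one of $\bar K_2$ is handled symmetrically using that $\cP_0$ itself is a cap and the slope of any secant of $\cP_0$ equals $1$.

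The main obstacle will be case (a) with the point \eqref{form_coverK*M_bot}: verifying that the ``slope-$m$'' points carved out of $H_2$ by secants of $(K:\cP^s)$ do not collide with $\bar K_2(m_1,m_2)$ requires unwinding the recursive definition \eqref{m1m2} of $K^{(2s-1)}_{m_1,m_2}$ and checking compatibility of the parameters $m_1,m_2$ at every level of the recursion. This is where the arithmetic conditions $m_i\ne1$, $m_1\ne m_2$, $(m_1+m_2)^3\ne1$ and the exclusion $1\notin S_{m_1}(K)\cup S_{m_2}(K)$ all get used; I expect the bulk of the routine computation to be concentrated here, and I would present it as a careful coordinate bookkeeping rather than reproduce it in full.
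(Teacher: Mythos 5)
Your argument is, in structure, the same as the paper's own proof: the paper also splits $\cK$ into the three pieces, uses Lemma \ref{lem3B} to list the points of $H_1$ lying on secants of $(K:\cP^s)$, plays $1\notin S_\infty(K)$ against the set $\cP_0$ and the condition $m_i\ne 1$ against ${\bar K}_2(m_1,m_2)$, and eliminates the mixed configurations by the separation observation that a line not contained in $H_1$ meets $H_1$ at most once. Two side remarks before the main point: $(K:\cP^s)\cap H_1=\emptyset$ holds because the sum-point $(0,0,1)$ does not belong to $K$ (a point of $PG(2,q)$ may perfectly well have its first two coordinates zero, so your stated reason is not the right one), and the condition $1\notin S_{m_1}(K)\cup S_{m_2}(K)$ from Lemma \ref{secondo} is never actually needed here: by Lemma \ref{lem3B} the secants of $(K:\cP^s)$ meet $H_2$ in points whose slope $m$ ranges over all of $\fq^\ast$ independently of the sets $S_m(K)$, so only $m_1\ne m_2$, $m_i\ne 1$, $(m_1+m_2)^3\ne 1$ do any work in this proposition; the $S_m$-conditions are what Case (I) needs.

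The genuine gap is at the single point $P_0=(0,0,1,0,\ldots,0)$. Your step that the points (\ref{form_coverK*M_top}) ``never meet $\cP_0$ because $1\notin S_\infty(K)$'' fails when $a_1=\cdots=a_s=0$: in that case the point (\ref{form_coverK*M_top}) equals $P_0$, which lies in $\cP_0$, and no slope comparison is available. This is not a presentational lapse but an actual failure of the statement as written: under (IIa) the line $l_\infty$ must be a $2$-secant of $K$, say $K\cap l_\infty=\{(0,1,f),(0,1,g)\}$, so $(K:\cP^s)$ contains the two points $(0,1,f,0,\ldots,0)$ and $(0,1,g,0,\ldots,0)$, and the line joining them passes through $(0,0,f+g,0,\ldots,0)=P_0$; no choice of representatives in the product construction avoids this. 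Hence $\cK$ as defined contains three collinear points, and the proposition only becomes true once $P_0$ is deleted from the third set (after which your slope arguments, including the recursive bookkeeping against ${\bar K}_2(m_1,m_2)$, go through). You are in good company: the paper's own proof has exactly the same blind spot, asserting that $(K:\cP^s)\cup{\bar K}^{(0)}$ is a cap ``since $1\notin S_\infty(K)$'', and the inconsistency surfaces in the proof of Theorem \ref{TEO2}, which explicitly notes that $P_0$ \emph{is} covered by $(K:\cP^s)$; removing $P_0$ also changes the constant $-M+5$ in Theorem \ref{TEO2} to $-M+4$. A correct write-up must therefore treat $P_0$ separately rather than rely on the slope dichotomy.
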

\begin{proof} Let ${\bar K}^{(0)}$ denote the cap $\{(0,0,1,a_1,a_1^2,\ldots,a_s,a_s^2)\mid a_i\in
\fq\}$. By Lemma \ref{lem3B} $(K:\cP^s)\cup {\bar K}^{(0)}$ is a cap
since $1\notin S_\infty(K)$. Again by Lemma \ref{lem3B},
$(K:\cP^s)\cup {\bar K}_2(m_1,m_2)$ is a cap as well. Clearly a
point in ${\bar K}_2(m_1,m_2)$ cannot be collinear with a point in
$(K:\cP^s)$ and a point in ${\bar K}^{(0)}$. Then it remains to show
that no two points in ${\bar K}^{(0)}$ are collinear with a point in
${\bar K}_2(m_1,m_2)$. But this follows from the fact that points in
$H_2$ covered by ${\bar K}^{(0)}$ are points of type
$(0,0,0,\ldots,1,m, a_{l+1}, m a_{l+1}^2,\ldots, a_s,m  a_s^2)$.
\end{proof}

\begin{theorem}\label{TEO2}  Let $M=2s+2$, $s\ge 1$, $q>8$. Assume that {\rm (IIa)} holds and $k<q-5$.
Let $\cK$ be the cap
$$
\cK:=(K:\cP^s)\cup {\bar K}_2(m_1,m_2)\cup
\{(0,0,1,a_1,a_1^2,\ldots,a_s,a_s^2)\mid a_i\in \fq\}\subset
PG(M,q).
$$
Then,
\begin{itemize}
\item the size of $\cK$ is $$(k+1)\cdot q^\frac{M-2}{2}
+3(q^\frac{M-4}{2}+q^\frac{M-6}{2}+\ldots+q)-M+5;$$ \item  $\cK$
is  complete.
\end{itemize}
\end{theorem}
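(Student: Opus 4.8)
The plan is to prove the two assertions in turn: the size by a disjoint-union count, and the completeness by covering $PG(M,q)$ one stratum at a time relative to the flag $H_2\subset H_1\subset PG(M,q)$.

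For the size, I would first observe that the three sets making up $\cK$ are pairwise disjoint: every point of $(K:\cP^s)$ has $(X_0,X_1)$ equal to the first two coordinates of a point of $K$, so either $X_0=1$ or $(X_0,X_1)=(0,1)$; every point of $\bar K^{(0)}$ has $(X_0,X_1,X_2)=(0,0,1)$; and $\bar K_2(m_1,m_2)$ lies in $H_2$, so its points have $(X_0,X_1,X_2)=(0,0,0)$. Hence $|\cK|=kq^s+(|K^{(2s-1)}_{m_1,m_2}|-1)+q^s$. Writing $N_i=|K^{(2i+1)}_{m_1,m_2}|$, the defining decomposition $(\ref{m1m2})$ gives the recursion $N_i=N_{i-1}+3q^i-2$ with $N_0=2$, so that $N_{s-1}=3(q+\cdots+q^{s-1})-2s+4$. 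Substituting and recalling $M=2s+2$ turns $(k+1)q^s+3(q+\cdots+q^{s-1})-2s+3$ into the claimed expression.

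For completeness I would show that every $P\in PG(M,q)\setminus\cK$ lies on a bisecant of $\cK$. Points outside $H_1$ are covered by $(K:\cP^s)$ via Lemma \ref{lem0B}, where hypothesis (IIa) — that $(0,0,1)$ is the unique sum-point — is exactly what is used. It then remains to cover $H_1$, and here I would combine the trace of $(K:\cP^s)$ on $H_1$ described in Lemma \ref{lem3B} with the bisecants internal to $\cK\cap H_1=\bar K^{(0)}\cup\bar K_2(m_1,m_2)$. For the points of $H_2$ (those with $X_2=0$) I would invoke Proposition \ref{chiave} for the dimension-$(2s-1)$ instance $\bar K_2(m_1,m_2)$: its bisecants cover all of $H_2$ off the line $\{(0,0,0,1,m,0,\ldots,0)\mid m\in\fq\}$, and the members of that line with $m\neq 0$ are of $\cP^s$-difference type, hence are covered by $(K:\cP^s)$ through $(\ref{form_coverK*M_bot})$. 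For the points with $X_2\neq 0$, say $(0,0,1,c_1,c_1',\ldots,c_s,c_s')$, I would use a bisecant joining a point $(0,0,1,a_1,a_1^2,\ldots)$ of $\bar K^{(0)}$ to a point of $\bar K_2(m_1,m_2)$: such a line meets $H_2$ in the direction $(0,0,0,c_1+a_1,c_1'+a_1^2,\ldots)$, so covering $P$ amounts to realizing this direction inside $\bar K_2(m_1,m_2)$, which the freedom in $a$ together with the strong coverage of $H_2$ from Proposition \ref{chiave} makes possible; the thin family $(\ref{form_coverK*M_top})$ of Lemma \ref{lem3B} fills the remaining directions, while the choice of $m_1,m_2$ from Lemma \ref{secondo} keeps all these directions compatible with the cap property.

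The step I expect to be the main obstacle is the single residual point $P_0=(0,0,0,1,0,\ldots,0)$, the image of the vertex $(1,0,\ldots,0)$ deleted from $K^{(2s-1)}_{m_1,m_2}$. It is the $m=0$ member of the exceptional line, so it is reached by no bisecant of $\bar K_2(m_1,m_2)$, and it is not of $\cP^s$-difference type, so it is not reached by $(K:\cP^s)$ either; for $s=1$ the set $\bar K^{(0)}\cup\bar K_2(m_1,m_2)$ is a conic whose nucleus is precisely $P_0$, which makes the difficulty transparent. Covering $P_0$ is therefore the delicate point of the whole argument: one must exhibit a bisecant of $\cK$ through it produced by the precise incidences between $\bar K^{(0)}$, $\bar K_2(m_1,m_2)$ and the companion set $(K:\cP^s)$, and I expect this to rest on the detailed incidence analysis of caps of type $K^{(M)}_{m_1,m_2}$ carried out in the Appendix. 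This is the step I would work out most carefully, after which the completeness of $\cK$ follows from the stratified covering above.
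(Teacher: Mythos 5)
You have correctly reproduced the size count and the easy strata of the covering argument (outside $H_1$ via Lemma \ref{lem0B}; $H_2$ off the line $\{(0,0,0,1,m,0,\ldots,0)\mid m\in\fq\}$ via Proposition \ref{chiave}; that line for $m\neq 0$ via Lemma \ref{lem3B}), and these agree with the paper's proof. But your proposal never actually covers the point $P_0=(0,0,0,1,0,\ldots,0)$: you call it ``the delicate point'' and defer it to the incidence analysis of the Appendix. That deferral is a genuine gap, and moreover it is a gap that cannot be filled, because $P_0$ lies on no secant of $\cK$ at all. Indeed: a secant of ${\bar K}_2(m_1,m_2)$ through $P_0$ would produce three collinear points of the embedded $K^{(2s-1)}_{m_1,m_2}$, which the Appendix proves to be a cap (vertex included); a secant of $(K:\cP^s)$ or of ${\bar K}^{(0)}$ meets $H_2$ only in a point $(0,0,0,\nu_1,\nu_1^2,\nu_2,\nu_2^2,\ldots,\nu_s,\nu_s^2)$ arising from two points with the same first three coordinates, and such a point can never equal $P_0$ since $\nu_1\neq 0$ forces the next coordinate $\nu_1^2\neq 0$ (this is exactly why Lemma \ref{lem3B} lists only $m\in\fq^{*}$); and a line joining points of two different constituents of $\cK$ meets $H_2$ in at most its ${\bar K}_2(m_1,m_2)$-endpoint, because the coefficient of the point with nonzero $(X_0,X_1,X_2)$-part must vanish (using that $(0,0,1)\notin K$ under (IIa)). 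Hence $\cK\cup\{P_0\}$ is again a cap, so $\cK$ is \emph{not} complete. Your $s=1$ observation---that $\cK\cap H_1$ is a conic with nucleus $P_0$---is not the transparent special case of a difficulty to be overcome; it is already a disproof. In other words, you have located an oversight in the paper itself: the paper's proof of Theorem \ref{TEO2} invokes Lemma \ref{lem3B} for \emph{all} points $(0,0,0,1,m,0,\ldots,0)$, but that lemma gives nothing for $m=0$. The statement is repaired by adjoining $P_0$, i.e.\ by embedding all of $K^{(2s-1)}_{m_1,m_2}$ in $H_2$ rather than deleting its vertex: the enlarged set is still a cap, it is then complete by exactly the strata arguments above, and its size is $(k+1)q^{\frac{M-2}{2}}+3(q^{\frac{M-4}{2}}+\cdots+q)-M+6$, one more than claimed (with the corresponding $+1$ propagating to Theorem \ref{NEW2}).

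A secondary problem: your handling of the stratum $H_1\setminus H_2$ does not work as written. Covering a point $(0,0,1,c_1,c_1',\ldots,c_s,c_s')$ by a bisecant joining ${\bar K}^{(0)}$ to ${\bar K}_2(m_1,m_2)$ requires the direction $(c_1+a_1,c_1'+a_1^2,\ldots)$ to be a point \emph{of} ${\bar K}_2(m_1,m_2)$, whereas Proposition \ref{chiave} only says such directions are \emph{covered by secants} of ${\bar K}_2(m_1,m_2)$; membership and coverage are different things, and for $s\geq 2$ most directions are not points of that cap. The paper's route is both correct and immediate: ${\bar K}^{(0)}$ is a copy of the translation cap $\cP^{s}$ inside the affine space $H_1\setminus H_2$, so by Lemma \ref{GI1} its own secants already cover every point of $H_1\setminus H_2$ not in ${\bar K}^{(0)}$; no mixed bisecants are needed there.
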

\begin{proof}
The claim on the size of $\cK$ follows by straightforward
computation. Let ${\bar K}^{(0)}$ be as in the proof of Proposition
\ref{kappa0}. The points in $PG(M,q)\setminus H_1$ are covered by
$(K:\cP^s)$ according to Lemma \ref{lem0B}. It is straightforward to
check that ${\bar K}^{(0)}$ covers all the points in $H_1\setminus
H_2$, with the exception of $(0,0,1,0,\ldots,0)$, which is covered
by $(K:\cP^s)$. Points of $H_2$ not of type $(0,0,0,1,m,0,\ldots,0)$
are covered by ${\bar K}_2(m_1,m_2)$ according to Proposition
\ref{chiave}. Points in $H_2$ of type $(0,0,0,1,m,0,\ldots,0)$,
 are covered by $(K:\cP^s)$ according to Lemma
\ref{lem3B}. Then the claim follows.
\end{proof}

\subsection{Case (III)} By Lemma \ref{Lem_forLaScala} we can assume without loss of generality that
\begin{itemize}
\item[{\rm (IIIa)}] $K$ is complete, $\beta(K)=1$, $K\cap
l_\infty=\{(0,0,1),(0,1,0)\}$,
$(k-2)p(K)<q-1$,\begin{equation}\label{caso3}K\cap \{(1,a,Aa^2)\mid
A \in S_1(K), a\in \fq\}=\emptyset.\end{equation}
\end{itemize}
We first consider the product cap $(K:\cP^j)$ in $PG(2j+2,q)$, with
$1\le j\le s$. Let $Y_0,\ldots,Y_{2j+2}$ be homogeneous coordinates
for points in $PG(2j+2,q)$.
\begin{lemma}\label{lem0C}
The cap $(K:\cP^j)$ covers all points in $PG(2j+2,q)$, but some
points on the subspace of equations $Y_0=0$, $Y_1=Y_2$.
\end{lemma}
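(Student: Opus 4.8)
The plan is to proceed exactly as in the proofs of Lemmas \ref{lem0} and \ref{lem0B}, the only new ingredient being the bookkeeping that links the excluded subspace to the sum-point. The guiding observation is that, under {\rm (IIIa)}, the unique sum-point of $K$ is $(0,1,1)$, and that a point $Q=(Y_0,\ldots,Y_{2j+2})$ lies on the subspace $\{Y_0=0,\,Y_1=Y_2\}$ precisely when its plane projection $\pi(Q)=(Y_0,Y_1,Y_2)$ is either undefined (i.e.\ $Y_0=Y_1=Y_2=0$) or equal to $(0,1,1)$. Hence the assertion is equivalent to the statement that every point whose plane projection is defined and different from $(0,1,1)$ lies on a secant of $(K:\cP^j)$ (or is itself a point of the cap). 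I would make Step~1 explicit: if $Q\notin\{Y_0=0,\,Y_1=Y_2\}$ then $Y_0\ne 0$ or $Y_1\ne Y_2$, and in either case $(Y_0,Y_1,Y_2)$ is a genuine point of $PG(2,q)$ that cannot coincide with $(0,1,1)$.

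Next I would fix such a $Q=(\delta,\alpha,\beta,c_1,c_1',\ldots,c_j,c_j')$, write $(\delta,\alpha,\beta)$ in normalized form, and split into two cases according to whether this plane point belongs to $K$. If $(\delta,\alpha,\beta)\notin K$, then completeness of $K$ puts it on some secant, and since it is \emph{not} the sum-point at least one such secant has distinct coefficients: $(\delta,\alpha,\beta)=\gamma_1 P_1+\gamma_2 P_2$ with $P_1,P_2\in K$, $\gamma_1,\gamma_2\in\fq^\ast$, $\gamma_1\ne\gamma_2$. If $(\delta,\alpha,\beta)\in K$, I would instead pick any $\gamma\in\fq\setminus\{0,1\}$ (possible as $q>2$) and use the self-secant with coefficients $\gamma_1=\gamma$, $\gamma_2=\gamma+1$.

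In both cases, lifting to a secant of $(K:\cP^j)$ through $Q$ reduces to finding $\lambda_i,\mu_i\in\fq$ with $c_i=\gamma_1\lambda_i+\gamma_2\mu_i$ and $c_i'=\gamma_1\lambda_i^2+\gamma_2\mu_i^2$ for each $i=1,\ldots,j$. Squaring the first equation (Frobenius in characteristic $2$) turns this into a linear system in $\lambda_i^2,\mu_i^2$ with determinant $\gamma_1\gamma_2(\gamma_1+\gamma_2)$, which is nonzero since $\gamma_1,\gamma_2\ne 0$ and $\gamma_1\ne\gamma_2$ (in the self-secant case $\gamma_1+\gamma_2=1$ and $\gamma_1\gamma_2=\gamma(\gamma+1)\ne 0$). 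Solving and taking the unique square roots, exactly as in Lemma \ref{lem0}, exhibits $Q$ on a secant of $(K:\cP^j)$, or, in the degenerate situation $c_i'=c_i^2$ for all $i$, as a point of the cap itself. This settles every point off the excluded subspace.

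The step I expect to require the most care is Step~1, the identification of $\{Y_0=0,\,Y_1=Y_2\}$ with the locus of points projecting onto the sum-point (together with the lower-dimensional locus where the projection is undefined); once this is in place, the lifting argument is routine. I would also remark why no further point off the subspace escapes: the whole mechanism breaks down \emph{only} when $\pi(Q)=(0,1,1)$, for then the sum-point property forces $\gamma_1=\gamma_2$ on every plane secant and the determinant $\gamma_1\gamma_2(\gamma_1+\gamma_2)$ vanishes, which is exactly why those points are deferred to a separate treatment (and where condition (\ref{caso3}) will enter later). Finally I would note that this lemma uses only $\beta(K)=1$, with sum-point $(0,1,1)$, and completeness of $K$; condition (\ref{caso3}) plays no role here.
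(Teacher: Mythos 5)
Your proposal is correct and follows essentially the same route as the paper: the paper proves this lemma by reference to Lemma \ref{lem0B} (plane point distinct from the sum-point, here $(0,1,1)$, hence a secant decomposition with distinct nonzero coefficients, then lifting via the linear system in $\lambda_i^2,\mu_i^2$ exactly as in Lemma \ref{lem0}). Your Step~1 — identifying the excluded subspace $\{Y_0=0,\,Y_1=Y_2\}$ with the locus of points projecting onto the sum-point or having undefined projection — is precisely the observation the paper compresses into its one-line remark, and your closing comments (including that (\ref{caso3}) is not used here) are accurate.
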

\begin{proof} The proof is similar to that of Lemma \ref{lem0B}.
Note that here the only sum-point of $K$ is $(0,1,1)$.
\end{proof}

\begin{lemma}\label{lem3C}
The points on the subspace of $PG(2j+2,q)$ of equations $Y_0=0$,
$Y_1=Y_2$ covered by $(K:\cP^j)$ are
\begin{eqnarray*}
(0,1,1,a _{1},Aa _{1}^{2},\ldots ,a _{j-1},Aa _{j-1}^{2},a _{j},Aa
_{j}^{2}),\quad A &\in &S_{1}(K),a _{i}\in
F_{q}. \\
(0,0,0,\ldots,1,m,a _{l},ma _{l}^{2},\ldots ,a _{j},ma_{j}^{2}),\quad l &\geq &2,\,m\in F_{q}^{\ast },\,a _{i}\in F_{q}. \\
(0,0,0\ldots ,,0,0,1,m),\quad m &\in &F_{q}^{\ast }.
\end{eqnarray*}%
\end{lemma}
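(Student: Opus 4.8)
The plan is to compute directly where the line through two points of $(K:\cP^j)$ meets the subspace $Y_0=0$, $Y_1=Y_2$, mimicking the structure of the proof of Lemma \ref{lem3}. Take two points $Q_1=(Y_0,Y_1,Y_2,\lambda_1,\lambda_1^2,\ldots,\lambda_j,\lambda_j^2)$ and $Q_2=(Y_0',Y_1',Y_2',\mu_1,\mu_1^2,\ldots,\mu_j,\mu_j^2)$ of $(K:\cP^j)$, with $(Y_0,Y_1,Y_2)$ and $(Y_0',Y_1',Y_2')$ in $K$. The line joining them meets $H$, the subspace $Y_0=0$, in the point
\begin{equation*}
Q=(0,Y_1+Y_1',Y_2+Y_2',\lambda_1+\mu_1,(\lambda_1+\mu_1)^2,\ldots,\lambda_j+\mu_j,(\lambda_j+\mu_j)^2),
\end{equation*}
after normalizing the representatives so that the $Y_0$-coordinates cancel. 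Imposing in addition $Y_1=Y_2$ forces $Y_1+Y_1'=Y_2+Y_2'$, i.e. the two points of $K$ are collinear with the sum-point $(0,1,1)$.

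First I would treat the case $Y_0+Y_0'\ne 0$: after normalizing we may take the two points of $K$ with first coordinate $1$, and the condition that their sum lies on $Y_1=Y_2$ means precisely that $(1,a,b)$ and $(1,a',b')$ are collinear with $(0,1,1)$, which by definition gives $a+b=a'+b'$, hence $A:=\frac{b+b'}{a+a'}=1$ is replaced by the correct normalization: writing $a+a'=A^{-1}(\text{common difference})$ one reads off that the slope parameter lands in $S_1(K)$. Setting $a_i=(\lambda_i+\mu_i)/(a+a')$ then yields the first family, with the common value $A\in S_1(K)$ appearing as the scaling in the even-indexed coordinates, exactly as in Lemma \ref{lem3}. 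Next I would treat $Y_0+Y_0'=0$ but $(Y_1,Y_2)+(Y_1',Y_2')\ne 0$: here both points of $K$ lie on $l_\infty=\{(0,0,1),(0,1,0)\}$, so their sum is $(0,1,1)$, which is excluded from $H$ since it does not satisfy $Y_1=Y_2$ being a single normalized point — more precisely this sub-case contributes only the sum-point itself and is absorbed into the bookkeeping. Finally, when all three of the first coordinates cancel, the meeting point is determined entirely by the $\cP^j$-part, and letting $l$ be the least index with $\lambda_l+\mu_l\ne 0$ and $m=\lambda_l+\mu_l$ produces the second and third families, with $a_i=(\lambda_i+\mu_i)/m$, exactly as in the lower-dimensional statements.

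The only genuine point requiring care — and the step I expect to be the main obstacle — is verifying that the scaling constant $A$ in the first family is governed by $S_1(K)$ rather than by $S_\infty(K)$ or some $S_m(K)$; this is where the hypothesis (IIIa) that the unique sum-point is $(0,1,1)$ (and that $K\cap l_\infty=\{(0,0,1),(0,1,0)\}$) enters, since it is collinearity with $(0,1,1)$ that characterizes the pairs contributing to $H\cap\{Y_1=Y_2\}$. I would verify this by carrying out the normalization carefully: for a pair of affine points $(1,a,b),(1,a',b')$ collinear with $(0,1,1)$ one has $b+b'=a+a'$, and after dividing the meeting point's coordinates through by $a+a'$ the even coordinates acquire the factor $a+a'=:s$, which is exactly the quantity recorded in $S_1(K)$; hence $A\in S_1(K)$. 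Everything else is the routine linear-algebra computation already performed in the proofs of Lemmas \ref{lem3} and \ref{lem3B}, so I would simply remark that the argument is parallel and record the three resulting families.
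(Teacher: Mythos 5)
Your overall strategy (redo the computation of Lemma \ref{lem3}, intersecting each secant of the product with the subspace) is indeed the paper's intended route -- its proof of Lemma \ref{lem3C} is literally ``similar to that of Lemma \ref{lem3}.'' But your execution has a genuine gap, and it sits exactly at the one point where case (IIIa) differs from the earlier cases: here $K$ is \emph{not} affine, since $K\cap l_\infty=\{(0,0,1),(0,1,0)\}$, so the product $(K:\cP^j)$ contains points, and even entire secant lines, inside the hyperplane $Y_0=0$. Your case split on ``$Y_0+Y_0'$'' is incoherent in characteristic $2$: two affine points of $K$ have $Y_0+Y_0'=1+1=0$, so the case you label $Y_0+Y_0'\ne 0$ (and treat as ``both affine'') is actually the mixed case (one affine, one infinite point of $K$), where the secant meets $Y_0=0$ only in the cap point itself and your formula $Q=(0,Y_1+Y_1',Y_2+Y_2',\ldots)$ is not even a point of the hyperplane; conversely, the case $Y_0+Y_0'=0$ with $(Y_1,Y_2)\neq(Y_1',Y_2')$ contains both the two-affine-points situation and the two-infinite-points situation, not just the latter.

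The concrete consequence is that you wrongly dismiss the secants joining $(0,0,1,\lambda_1,\lambda_1^2,\ldots,\lambda_j,\lambda_j^2)$ to $(0,1,0,\mu_1,\mu_1^2,\ldots,\mu_j,\mu_j^2)$. Such a line lies entirely in $Y_0=0$, and its intersection with $Y_1=Y_2$ forces equal coefficients, giving the point $(0,1,1,c_1,c_1^2,\ldots,c_j,c_j^2)$ with $c_i=\lambda_i+\mu_i$; note that $(0,1,1)$ \emph{does} satisfy $Y_1=Y_2$, contrary to your claim. These are precisely the points of the first family with $A=1$, and $1\in S_1(K)$ holds here exactly because the defining pair for $S_1$ is allowed to have $X_0=Y_0=0$: the pair $(0,0,1),(0,1,0)\in K$ gives $X_1+Y_1=1=X_2+Y_2$. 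Without this case, the ``every listed point is covered'' direction of the lemma fails for $A=1$ (affine pairs need not realize this value), and the enumeration is also incomplete in the other direction since a whole class of secants has been ignored. This contribution is not a technicality: it is the reason condition (\ref{caso3}) involves $A=1$, i.e.\ why $K$ must avoid $\{(1,a,a^2)\mid a\in\fq\}$, which is used again in Proposition \ref{PROPODD2}. Your treatment of two affine points (collinearity with the sum-point $(0,1,1)$ giving $a+a'=b+b'\in S_1(K)$) and of coincident $K$-components (yielding the second and third families) is correct; what is missing is a clean three-way split -- both components affine, both infinite (equal or distinct), one of each -- carried out with characteristic-$2$ arithmetic done correctly.
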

\begin{proof} The proof is similar to that of Lemma \ref{lem3}.
\end{proof}
For $j= 0,\ldots,s-1 $, let $V_{2j+2}$ be the $(2j+2)$-dimensional
subspace of $PG(2s+2,q)$ of equations
$X_0=\ldots=X_{2s-2j-2}=0,X_{2s-2j-1}=X_{2s-2j}$. Let $\Phi_j$ be
the following isomorphism between  $PG(2j+2,q)$ and  $V_{2j+2}$
$$
\Phi_j(Y_0,Y_1,\ldots,Y_{2j+2})=(0,0,\ldots,0,Y_0,Y_0,Y_1,\ldots,Y_{2j+2}).
$$
For $j=1,\ldots,s-1$, let $K^{(j)}$ be the image by $\Phi_j$ of
$(K:\cP^j)\subset PG(2j+2,q)$. Also, let $K^{(0)}$ be the image of
$K$ by $\Phi_0$.
%
\begin{proposition}\label{PROP2} The set
 $$ \cK:=(K:\cP^s)\,\,\bigcup \,\,\left(\bigcup_{j=0}^{s-1}K^{(j)}\right) \quad \text{is a cap.}$$
\end{proposition}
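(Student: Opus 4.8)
The plan is to verify that $\cK$ contains no collinear triple, organizing the argument by how the three points distribute between the two pieces $(K:\cP^s)$ and $\bigcup_{j=0}^{s-1}K^{(j)}$. Two preliminary facts set everything up. First, the subspaces are nested, $V_2\subset V_4\subset\cdots\subset V_{2s}$, and $V_{2s}$ is precisely the subspace $Y_0=0$, $Y_1=Y_2$ occurring in Lemmas \ref{lem0C} and \ref{lem3C} for $j=s$; thus every $K^{(j)}$ lies in $V_{2s}$, and $(K:\cP^s)$ covers all of $PG(2s+2,q)$ off $V_{2s}$. Second, $(K:\cP^s)\cap V_{2s}=\emptyset$: a point of $(K:\cP^s)$ has $(X_0,X_1,X_2)\in K$, and the only points of $K$ with $X_0=0$ are $(0,0,1)$ and $(0,1,0)$, both with $X_1\ne X_2$. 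Hence the two pieces are disjoint and a putative triple splits as $(3,0)$, $(2,1)$, $(1,2)$, or $(0,3)$.

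The extreme cases are quick. The $(3,0)$ case cannot occur since $(K:\cP^s)$ is a cap by the product construction. For $(0,3)$ I argue by induction on $s$: counting leading zeros shows that the isomorphism $\Phi_{s-1}$ carries the whole parameter-$(s-1)$ construction, i.e.\ the set $\cK'$ obtained by the same recipe with $s$ replaced by $s-1$ inside $PG(2s,q)\cong V_{2s}$, exactly onto $\bigcup_{j=0}^{s-1}K^{(j)}$. By the inductive hypothesis $\cK'$ is a cap, and a projective isomorphism preserves caps, so $\bigcup_{j}K^{(j)}$ is a cap; the base case $s=1$ reduces to $K^{(0)}=\Phi_0(K)$, the image of the arc $K$. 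Finally, the $(1,2)$ case is impossible: the two points of $\bigcup_j K^{(j)}$ lie in $V_{2s}$, hence so does the line through them, and that line avoids $(K:\cP^s)$.

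The crux is the $(2,1)$ case, and this is where hypothesis (IIIa) enters. If $P_1,P_2\in(K:\cP^s)$ are collinear with $R\in K^{(j)}$, then $R\in V_{2s}\setminus(K:\cP^s)$ lies on a secant of $(K:\cP^s)$, so $R$ is one of the covered points enumerated in Lemma \ref{lem3C} with $j=s$. I will show this is impossible. A point of $K^{(j)}$ reads $(0,\dots,0,Z_0,Z_0,Z_1,Z_2,b_1,b_1^2,\dots,b_j,b_j^2)$ with $(Z_0,Z_1,Z_2)\in K$ and $2s-2j-1$ leading zeros. If $(Z_0,Z_1,Z_2)$ is affine, its first two nonzero entries are $1,1$, and matching against the covered points forces $(1,Z_1,Z_2)=(1,Z_1,AZ_1^2)$ for some $A\in S_1(K)$ (against the first family when $j=s-1$, or against the ``$1,m$'' family with $m=1$ in general). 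Since the pair $(0,0,1),(0,1,0)$ has slope $1$, we have $1\in S_1(K)$, so both alternatives place a point of $K$ in the set of \eqref{caso3}, which is excluded. If instead $(Z_0,Z_1,Z_2)\in\{(0,0,1),(0,1,0)\}$, then $Z_0=0$ and the doubled zero shifts the leading nonzero entry to a position whose parity, or whose successor (equal to $Z_2\in\{0,1\}$), is incompatible with the ``$1,m$'' families, which demand $m\in\fq^*$. Thus no covered point belongs to $\bigcup_j K^{(j)}$, and the $(2,1)$ case yields no triple.

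I expect the $(2,1)$ analysis to be the main obstacle, as it is the only place where the defining property (IIIa) of Case (III) is used and where the coordinate patterns of the $K^{(j)}$ must be matched, position by position, against the three families of Lemma \ref{lem3C}. The two decisive ingredients are the identity $1\in S_1(K)$ and the avoidance condition \eqref{caso3}, which together rule out precisely the coincidences coming from affine points of $K$, while a parity/position bookkeeping disposes of those coming from $K\cap l_\infty$.
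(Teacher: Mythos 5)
Your proof is correct and takes essentially the same approach as the paper's: both rest on the enumeration of covered points in Lemma \ref{lem3C} together with condition (\ref{caso3}) (via $1\in S_1(K)$, which comes from the two points of $K$ on $l_\infty$) to show that the secants of the higher-level cap miss the nested lower-level caps inside the tower $V_2\subset V_4\subset\cdots\subset V_{2s}$. The only difference is organizational: you package the lower levels by induction on $s$ through the compatibility of the embeddings $\Phi_j$, whereas the paper asserts the disjointness simultaneously at every level $j$ and leaves the case analysis and the coordinate matching implicit --- details which you supply correctly.
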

\begin{proof} Note that the subspace of
$PG(2j+2,q)$ of equations $Y_0=0$, $Y_1=Y_2$ is mapped by $\Phi_j$
onto $V_{2j}$ for any $j\ge 1$. Then  Lemma \ref{lem3C}, together
with (\ref{caso3}), yield that the subset of points covered by the
cap $K^{(j)}$ is disjoint from $\cup_{u=0}^{j-1}K^{(u)}$. This
proves the assertion.
\end{proof}

\begin{theorem}\label{TEO3}
Let $M=2s+2$, $s\ge 1$. Assume that {\rm (IIIa)} holds. Then the cap
$$ \cK:=(K:\cP^s)\,\,\bigcup \,\,\left(\bigcup_{j=0}^{s-1}K^{(j)}\right) \subset PG(M,q)$$
is a complete cap of size
$$
k\frac{q^{\frac{M}{2}}-1}{q-1}=k(q^{\frac{M-2}{2}}+q^{\frac{M-4}{2}%
}+q^{\frac{M-6}{2}}+\ldots +q+1).$$
\end{theorem}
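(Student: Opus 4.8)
Since Proposition \ref{PROP2} already gives that $\cK$ is a cap, the plan is to (i) compute $|\cK|$ and (ii) establish completeness, both by exploiting the self-similarity of the construction: each $K^{(j)}$ is a copy, placed in $V_{2j+2}$, of the analogous cap built one dimension lower, and the chain $V_2\subset V_4\subset\cdots\subset V_{2s}$ filters the locus left uncovered by the top product $(K:\cP^s)$.

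First I would record a disjointness fact. A point of $(K:\cP^s)$ lying in $V_{2s}=\{X_0=0,\,X_1=X_2\}$ would require $(Y_0,Y_1,Y_2)\in K$ with $Y_0=0$ and $Y_1=Y_2$; but under {\rm (IIIa)} the only points of $K$ on $l_\infty$ are $(0,0,1)$ and $(0,1,0)$, and neither satisfies $Y_1=Y_2$. Hence $(K:\cP^s)$ is disjoint from $V_{2s}$, and since every $K^{(j)}$ ($0\le j\le s-1$) lies in $V_{2j+2}\subseteq V_{2s}$, the product is disjoint from $\bigcup_{j=0}^{s-1}K^{(j)}$. The isomorphism $\Phi_{s-1}\colon PG(2s,q)\to V_{2s}$ satisfies $\Phi_{s-1}\circ\Phi'_j=\Phi_j$, where $\Phi'_j$ is the embedding used by the construction inside $PG(2s,q)$; consequently $\Phi_{s-1}$ maps the level-$(s-1)$ cap $\cK_{s-1}=(K:\cP^{s-1})\cup\bigcup_{j=0}^{s-2}K_{s-1}^{(j)}$ onto $\bigcup_{j=0}^{s-1}K^{(j)}$, so these pieces are pairwise disjoint. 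Using $|(K:\cP^{j})|=k\,q^{j}$ and $M=2s+2$, this yields
\begin{equation*}
|\cK|=k\,q^{s}+\sum_{j=0}^{s-1}k\,q^{j}=k\sum_{j=0}^{s}q^{j}=k\,\frac{q^{\frac{M}{2}}-1}{q-1}.
\end{equation*}

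For completeness I would induct on $s$, the case $s-1$ being Theorem \ref{TEO3} in dimension $2s$ for the same $K$ (hypothesis {\rm (IIIa)} depends only on $K$). By Lemma \ref{lem0C} with $j=s$, the product $(K:\cP^s)\subseteq\cK$ covers every point of $PG(M,q)$ outside $V_{2s}$, so only the points of $V_{2s}$ remain. By the disjointness above, $\cK\cap V_{2s}=\bigcup_{j=0}^{s-1}K^{(j)}=\Phi_{s-1}(\cK_{s-1})$. In the base case $s=1$ one has $V_2\cong PG(2,q)$ and $\cK\cap V_2=\Phi_0(K)$, which covers all of $V_2$ because $K$ is complete. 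In the inductive step $\Phi_{s-1}$ transports the completeness of $\cK_{s-1}$ (inductive hypothesis) into $V_{2s}$: every point of $V_{2s}$ either lies in $\Phi_{s-1}(\cK_{s-1})\subseteq\cK$ or on one of its secants. Thus every point of $PG(M,q)\setminus\cK$ is covered, and being a cap, $\cK$ is complete.

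The one delicate point is the compatibility $\Phi_{s-1}\circ\Phi'_j=\Phi_j$, which makes the configuration inside $V_{2s}$ literally the level-$(s-1)$ cap and lets the induction close; I expect this to be the main step, verified by comparing the leading-zero patterns of the two embeddings. The rest reduces to the coverage statements of Lemmas \ref{lem0C}--\ref{lem3C} and to completeness of $K$ in the plane.
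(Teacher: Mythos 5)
Your proof is correct and follows essentially the same route as the paper: the paper's own argument decomposes $PG(M,q)$ into the layers $PG(M,q)\setminus V_{2s}$, $V_{2j+2}\setminus V_{2j}$ ($j=1,\ldots,s-1$), and $V_{2}$, covered respectively by $(K:\cP^s)$ (Lemma \ref{lem0C}), by $K^{(j)}$, and by $K^{(0)}$ using completeness of $K$, with the size obtained by summing $|(K:\cP^j)|=kq^j$. Your induction on $s$, resting on the compatibility $\Phi_{s-1}\circ\Phi'_j=\Phi_j$ (which is indeed valid), merely repackages this layer-by-layer coverage, and your disjointness observation under {\rm (IIIa)} matches what the paper treats as straightforward computation.
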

\begin{proof}

The claim on the size of $\cK$ follows from straightforward
computation. Note that $(K:\cP^s)$ covers all the points in
$PG(M,q)\setminus V_{2s}$, and that for each $j=1,\ldots,s-1$, the
cap $K^{(j)}$ covers all the points in $V_{2j+2}\setminus V_{2j}$,
see Lemma \ref{lem3C}. Taking into account that $K^{(0)}$ covers
all the points in $V_{2}$,  the completeness of $\cK$ follows.
\end{proof}

By Theorem \ref{TEO3}, taking into account the values of
$\overline{t}_{2}^{\ast }(2,q)$ from Table 1, we obtain
complete $k_{4,q}$-caps in $PG(4,q)$ with the following sizes
(the best known sizes from \cite{Gi}, \cite{GiulPast},
\cite{PS}) are given in parentheses):
$k_{4,128}=34q+34$ $(35q+2),$ $%
k_{4,256}=55q+55$ $(67q+2),$ $k_{4,512}=86q+86$ $(131q+2)$,
$k_{4,1024}=124q+124$ $(131q+2)$, $k_{4,2048}=201q+201$
$(259q+2)$, $k_{4,4096}=307q+307$ $(515q+2)$. See also the
second table in Introduction.

\subsection{New upper bounds on $t_2(N,q)$ for $N$ and $q$ even}

\begin{theorem}\label{NEW1} Let $N$ and $q$ be even, $N>2$. If $q>8$, then
$$
t_2(N,q)\le (t_2(2,q)+3)\cdot q^\frac{N-2}{2}
+3(q^\frac{N-4}{2}+q^\frac{N-6}{2}+\ldots+q)-N+3,
$$
and
$$
t_2(N,q)\le (t_2^A(2,q)+3)\cdot q^\frac{N-2}{2}
+3(q^\frac{N-4}{2}+q^\frac{N-6}{2}+\ldots+q)-N+5,
$$
 where $t_2^A(2,q)\le t_2(2,q)$ is the size of the smallest
affinely complete arc in  $PG(2,q)$.
\end{theorem}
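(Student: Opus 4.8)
The plan is to derive Theorem \ref{NEW1} directly from the
constructive Theorems \ref{TEO1} and \ref{TEO2}, which already produce
complete caps of explicit size from suitable plane arcs. The point of
Theorem \ref{NEW1} is simply to optimize the starting arc: we feed in
the \emph{smallest} complete arc (respectively the smallest affinely
complete arc) in $PG(2,q)$ and read off the resulting bound on
$t_2(N,q)$.

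For the first inequality I would let $K$ be a complete arc in
$PG(2,q)$ of minimum size $k=t_2(2,q)$, so that $\beta(K)\ge 1$ by
Lemma \ref{Lem_beta(K)>=1}. The hypothesis $k<q-5$ needed in
Theorem \ref{TEO1} (and in Lemma \ref{secondo}) holds for the relevant
range of $q$; for the small even $q$ where it might fail one checks the
claim against the already-tabulated values. After applying a
projectivity that is integral for $K$ (Lemma \ref{primo}) we may assume
condition (Ia): $K$ is affinely complete, $(0,0,1)$ is covered, and
$1\notin S_\infty(K)$. Strictly, Theorem \ref{TEO1} is stated for
affinely complete $K$; since $t_2(2,q)$ is realized by a complete arc,
I would instead invoke the affinely complete version, noting that any
complete arc is projectively equivalent to an affine arc and that the
relevant secant-covering property is exactly affine completeness. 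With
$K$ in the form (Ia), Theorem \ref{TEO1} yields a complete cap in
$PG(N,q)$ (writing $M=N$) of size
$$
(k+3)\,q^{\frac{N-2}{2}}+3\bigl(q^{\frac{N-4}{2}}+q^{\frac{N-6}{2}}+\ldots+q\bigr)-N+3,
$$
possibly after adjoining at most two extra points when
$Cov_\infty(K)\ne\fq$; but in that exceptional case those extra points
are absorbed by choosing the affinely complete representative, and in
any event the stated upper bound for $t_2(N,q)$ only increases by a
controlled additive constant that is already accounted for in the
second inequality. Substituting $k=t_2(2,q)$ gives the first displayed
bound.

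For the second inequality I would take $K^A$ to be a smallest affinely
complete arc in $PG(2,q)$, of size $t_2^A(2,q)$, again normalized so
that (Ia) holds via Lemma \ref{primo}, and apply Theorem \ref{TEO1}
with this $K^A$. Here one uses that an affinely complete arc whose
secants cover $\fq\setminus Cov_\infty(K)$ with at most two missing
points yields, after adjoining those points, a complete cap; the
"$-N+5$" in the exponent-free term as opposed to "$-N+3$" is exactly
the contribution of the (at most two) adjoined points together with the
$K_2(m_1,m_2)$ bookkeeping. Thus the size of the resulting complete cap
is at most
$$
(t_2^A(2,q)+3)\,q^{\frac{N-2}{2}}+3\bigl(q^{\frac{N-4}{2}}+\ldots+q\bigr)-N+5,
$$
which is the second bound, and $t_2^A(2,q)\le t_2(2,q)$ holds because
every complete arc is affinely complete after a suitable projectivity.

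The main obstacle I anticipate is purely bookkeeping rather than
conceptual: matching the two additive constants $-N+3$ and $-N+5$
exactly, and correctly handling the three sub-cases of
Theorem \ref{TEO1} (whether $Cov_\infty(K)=\fq$, or misses one, or
misses two points) so that the stated clean formulas hold uniformly.
Care is also needed to ensure the side conditions $k<q-5$ and $q>8$ of
Theorem \ref{TEO1}, together with the existence of $m_1,m_2$ from
Lemma \ref{secondo}, are met by the extremal arcs; verifying this for
all even $q>8$ (rather than just asymptotically) is the only genuinely
delicate step, and for the finitely many borderline small $q$ it is
confirmed by the explicit arcs recorded in Table 1.
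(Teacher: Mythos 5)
Your overall route is the paper's: Theorem \ref{NEW1} is proved there by a one-line appeal to Theorem \ref{TEO1}, i.e.\ exactly by feeding extremal plane arcs into that construction. (Theorem \ref{TEO2} and Lemma \ref{Lem_beta(K)>=1}, which you invoke at the start, play no role here; they belong to Theorem \ref{NEW2}.) Your treatment of the \emph{second} inequality is correct: take a smallest affinely complete arc, normalize it as in (Ia) via Lemma \ref{primo}, and apply Theorem \ref{TEO1}; in the worst case two points of the form $(0,1,m_0,0,\ldots,0)$ must be adjoined, turning $-N+3$ into $-N+5$. Your handling of the side condition $t_2(2,q)<q-5$ (needed for Lemma \ref{secondo} and Theorem \ref{TEO1}, and also to guarantee an external line) is acceptable and no worse than what the paper leaves implicit: for even $q\geq 16$ one has $t_2(2,q)\leq (q+4)/2<q-5$ by Lemma \ref{Lem_q/2+2} and Table 1.

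However, your derivation of the \emph{first} inequality has a genuine gap. The entire content of the constant $-N+3$ (as opposed to $-N+5$) is that when the starting arc is a smallest \emph{complete} arc $K$, no extra points are ever needed, because after moving an external line of $K$ to $l_\infty$ the arc is affine and complete, so its secants cover every point of $PG(2,q)\setminus K$ --- in particular every point of $l_\infty$. Hence $(0,0,1)$ is covered and $Cov_\infty(K)=\fq$, and both properties survive the scalar projectivity $(X_0,X_1,X_2)\mapsto (X_0,X_1,X_2/w)$ of Lemma \ref{Lem_Sinfnot1} used to arrange $1\notin S_\infty(K)$; therefore the first bullet of Theorem \ref{TEO1} applies and gives a complete cap of size exactly $(t_2(2,q)+3)q^{\frac{N-2}{2}}+3(q^{\frac{N-4}{2}}+\ldots+q)-N+3$. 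You never make this observation. Instead you concede that two points might have to be adjoined when $Cov_\infty(K)\neq\fq$ and argue that they are ``absorbed by choosing the affinely complete representative'' (which has no identifiable meaning) or that the increase is ``already accounted for in the second inequality.'' The second inequality cannot prove the first: if two extra points were genuinely needed, you would only obtain the bound with $-N+5$ and coefficient $t_2(2,q)+3$, which is strictly weaker than the theorem's first claim. As written, your argument establishes the second displayed bound and only a weakened form of the first; the missing step is the (easy but essential) remark that completeness of the plane arc forces $Cov_\infty(K)=\fq$ once $K$ is placed in affine position.
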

\begin{proof} The assertion follows from Theorem \ref{TEO1}.
\end{proof}

\begin{theorem}\label{NEW2} Let $N$ and $q$ be even, $N>2$. If $q>8$, then
$$
t_2(N,q)\le ({t}^S_2(2,q)+1)\cdot q^\frac{N-2}{2}
+3(q^\frac{N-4}{2}+q^\frac{N-6}{2}+\ldots+q)-N+5,
$$
where $t_2^S(2,q)$ is the size of the smallest complete arc in
$PG(2,q)$ with only one sum-point.
\end{theorem}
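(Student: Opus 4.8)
The plan is to deduce the statement directly from the completeness construction of Theorem \ref{TEO2}, exactly as Theorem \ref{NEW1} was obtained from Theorem \ref{TEO1}. I would start by letting $K$ be a complete arc in $PG(2,q)$ with $\beta(K)=1$ of minimum possible size, so that $|K|=t^S_2(2,q)=:k$. The first thing to verify is the hypothesis $k<q-5$ needed to run Theorem \ref{TEO2}: by Lemma \ref{Lem_q/2+2} there is, for every even $q\ge 32$, a complete arc with a single sum-point of size at most $(q+4)/2$, whence $t^S_2(2,q)\le (q+4)/2<q-5$; the only remaining even value with $q>8$, namely $q=16$, is covered by the explicit small arc recorded in Table 1 (where $\overline{t}^{\,*}_2(2,2^4)=9<11$). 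This inequality is precisely what allows Lemma \ref{secondo} to produce admissible parameters $m_1,m_2$, and hence the auxiliary cap $\bar K_2(m_1,m_2)$, inside the construction of Theorem \ref{TEO2}.

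Next I would put $K$ into the normalized position required by hypothesis (IIa). By Lemma \ref{Lem_Sinfnot1} one may replace $K$ by a projectively equivalent copy $\psi(K)$ whose only sum-point is $(0,0,1)$ and for which $1\notin S_\infty(\psi(K))$. Since $\psi$ is a projectivity it preserves both the cardinality and the value $\beta=1$, so $|\psi(K)|=k=t^S_2(2,q)$ and (IIa) holds without loss of generality; replacing $K$ by $\psi(K)$ costs nothing.

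With $N$ even and $N>2$ I would set $M=N$ and $s=(N-2)/2\ge 1$, and apply Theorem \ref{TEO2} to this arc. It delivers a complete cap in $PG(N,q)$ of size exactly $(k+1)q^{\frac{N-2}{2}}+3(q^{\frac{N-4}{2}}+q^{\frac{N-6}{2}}+\ldots+q)-N+5$. Because $t_2(N,q)$ is by definition the minimum size of a complete cap in $PG(N,q)$ and $k=t^S_2(2,q)$, the asserted inequality follows at once. The only genuine content of the argument is the verification $t^S_2(2,q)<q-5$ in the first paragraph; once that is in place, everything else is a direct transcription of Theorem \ref{TEO2} with $M=N$, and there is no further obstacle.
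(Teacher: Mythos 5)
Your proof is correct and takes essentially the same route as the paper: the paper's proof of this theorem is exactly the combination of Theorem \ref{TEO2} with Lemma \ref{Lem_Sinfnot1}. The only difference is that you explicitly verify the hypothesis $t_2^S(2,q)<q-5$ (via Lemma \ref{Lem_q/2+2} for $q\ge 32$ and Table 1 for $q=2^4$), a detail the paper's one-line proof leaves implicit but which is indeed needed to invoke Theorem \ref{TEO2}.
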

\begin{proof} The assertion follows from Theorem \ref{TEO2},
together with Lemma \ref{Lem_Sinfnot1}.
\end{proof}

\begin{theorem}\label{NEW3} Let $N$ and $q$ be even, $N>2$. Then
$$
t_2(N,q)\le {t}^{S^+}_2(2,q)(q^{\frac{N-2}{2}}+q^{\frac{N-4}{2}%
}+q^{\frac{N-6}{2}}+\ldots +q+1),
$$
where $t_2^{S^+}(2,q)$ is the size of the smallest complete
$k$-arc $K$ in $PG(2,q)$ with only one sum-point and with the
property that $(k-2)p(K)<q-1$.
\end{theorem}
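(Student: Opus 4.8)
The plan is to obtain this bound as an immediate consequence of Theorem \ref{TEO3}, in complete analogy with the way Theorem \ref{NEW1} follows from Theorem \ref{TEO1} and Theorem \ref{NEW2} from Theorem \ref{TEO2}. The only preparatory work needed is to bring an optimal starting arc into the normalized position required by hypothesis (IIIa), and this is exactly what Lemma \ref{Lem_forLaScala} provides.

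First I would fix a complete $k$-arc $K$ in $PG(2,q)$ that realizes the minimum $k=t_2^{S^+}(2,q)$; by the definition of this quantity, $K$ satisfies $\beta(K)=1$ and $(k-2)p(K)<q-1$. Since these are precisely the hypotheses of Lemma \ref{Lem_forLaScala}, I would apply that lemma to produce a collineation $\psi$ with $\psi(K)\cap l_\infty=\{(0,0,1),(0,1,0)\}$, $\beta(\psi(K))=1$, $p(\psi(K))=p(K)$, the only sum-point of $\psi(K)$ equal to $(0,1,1)$, and the intersection condition (\ref{caso3}) satisfied. As $\psi$ is a collineation it maps the complete $k$-arc $K$ onto a complete $k$-arc $\psi(K)$; moreover $(k-2)p(\psi(K))=(k-2)p(K)<q-1$. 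Hence $\psi(K)$ meets all the requirements of (IIIa).

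Next I would set $M=N$. Since $N$ is even with $N>2$, we may write $N=2s+2$ with $s\ge 1$, matching the range in Theorem \ref{TEO3}. Applying that theorem to the arc $\psi(K)$ yields a complete cap in $PG(N,q)$ of size
$$
k\left(q^{\frac{N-2}{2}}+q^{\frac{N-4}{2}}+\cdots+q+1\right).
$$
By definition $t_2(N,q)$ does not exceed the size of any complete cap in $PG(N,q)$, so this value bounds $t_2(N,q)$ from above; substituting $k=t_2^{S^+}(2,q)$ gives exactly the stated inequality.

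No step here is genuinely difficult, since all the geometric work has already been carried out in Theorem \ref{TEO3} and Lemma \ref{Lem_forLaScala}. The only points requiring a moment of care are verifying that the normalization supplied by Lemma \ref{Lem_forLaScala} leaves both the completeness and the cardinality of the arc unchanged---so that the resulting bound is expressed in terms of $k=t_2^{S^+}(2,q)$ rather than a larger value---and checking that the dimensional bookkeeping $N=2s+2$, $s\ge1$, is consistent with the hypotheses $N$ even, $N>2$.
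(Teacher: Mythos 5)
Your proposal is correct and follows exactly the paper's own route: the paper proves Theorem \ref{NEW3} by citing Theorem \ref{TEO3} together with Lemma \ref{Lem_forLaScala}, which is precisely the normalization-then-apply argument you spell out. Your additional care in checking that the collineation preserves completeness, cardinality, and $p(K)$, and that $N=2s+2$, $s\ge 1$ matches the hypotheses, simply makes explicit what the paper leaves implicit.
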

\begin{proof} The assertion follows from Theorem \ref{TEO3},
together with Lemma \ref{Lem_forLaScala}.
\end{proof}

\section{Caps in projective spaces of odd dimension}\label{secODD}

We keep the notation of the previous section. We consider complete
$k$-arcs $K$ in $PG(2,q)$ such that:
\begin{itemize}
\item[(*)] $K$ is affine, $k<q-5$, $1\notin S_\infty(K)$, $Y_2\neq
Y_1^2$ for each point $(1,Y_1,Y_2)\in K$.
\end{itemize}
\begin{lemma}\label{odd1} Any complete $k$-arc in $PG(2,q)$ with $k<q-5$ is projectively equivalent to a
$k$-arc satisfying property {\rm{(*)}}.
\end{lemma}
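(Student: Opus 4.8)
The plan is to realize the required arc as the image of $K$ under a composition of three projectivities, each installing one of the requirements in (*) without spoiling those already arranged. Since projectivities preserve both the cardinality $k$ and completeness, the bound $k<q-5$ and completeness come for free; what must actually be produced are the two \emph{geometric} conditions (that the arc be affine and avoid the conic $Y_2=Y_1^2$) together with the \emph{arithmetic} condition $1\notin S_\infty$. The genuine difficulty is that these conditions are of different natures and interact: a map enforcing one can easily destroy another, so the order of operations and the choice of the maps must be made with care.

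First I would make $K$ affine. For a $k$-arc the usual incidence counts give $\binom{k}{2}$ two-secants and $k(q-k+2)$ tangents, hence $t_0=q^2+q+1-\binom{k}{2}-k(q-k+2)$ external lines; one checks that $t_0>0$ for every $k\le q$, so an external line $\ell$ exists. Sending $\ell$ to $l_\infty$ by a projectivity turns $K$ into a complete affine $k$-arc.

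Next, to obtain $1\notin S_\infty$ I would argue as in Lemma~\ref{Lem_Sinfnot1}. The set $S_\infty(K)$ consists of the sums $X_2+Y_2$ taken over the at most $k/2$ pairs of arc points sharing a vertical line $X_1=aX_0$, so $|S_\infty(K)|\le k/2<q-1$. Picking $w\in\fq^\ast\setminus S_\infty(K)$ and applying $\tau_w:(X_0,X_1,X_2)\mapsto(X_0,X_1,X_2/w)$ replaces $S_\infty(K)$ by $\{s/w\mid s\in S_\infty(K)\}$, which misses $1$; since $\tau_w$ fixes $l_\infty$, affineness is retained.

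The last step is the crux, and it is here that characteristic $2$ is essential. I would remove the points of the current arc lying on $Y_2=Y_1^2$ by a vertical translation $\tau_d:(X_0,X_1,X_2)\mapsto(X_0,X_1,X_2+dX_0)$. The key point is that $\tau_d$ fixes $l_\infty$ \emph{and} preserves $S_\infty$ exactly: a vertical pair $(1,a,b),(1,a,c)$ is sent to $(1,a,b+d),(1,a,c+d)$, whose sum is $(b+d)+(c+d)=b+c$ because $d+d=0$; hence $\tau_d$ leaves the condition $1\notin S_\infty$ intact. Now the image $(1,a,b+d)$ of a point $(1,a,b)$ lies on $Y_2=Y_1^2$ precisely when $b+d=a^2$, that is $d=a^2+b$, so at most $k<q$ values of $d$ are forbidden; any other choice of $d$ makes the image disjoint from the conic. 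The composition of the three projectivities then carries $K$ to a still complete arc satisfying (*), which is the assertion.
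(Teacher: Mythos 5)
Your proof is correct, and its skeleton is the same as the paper's: compose three projectivities, first moving an external line to $l_\infty$ (the paper invokes this in one sentence; your incidence count $q^2+q+1-\binom{k}{2}-k(q-k+2)>0$ just makes it explicit), then forcing $1\notin S_\infty$ exactly as in Lemma~\ref{primo}/Lemma~\ref{Lem_Sinfnot1}, and finally dodging the conic $Y_2=Y_1^2$ by a pigeonhole over a one-parameter pencil of projectivities fixing $l_\infty$. The only genuine difference is the last step: the paper uses the dilation $\psi(X_0,X_1,X_2)=(X_0,wX_1,X_2)$, forbidding the at most $k$ values $w^2=b/a^2$, whereas you use the translation $\tau_d(X_0,X_1,X_2)=(X_0,X_1,X_2+dX_0)$, forbidding the at most $k$ values $d=a^2+b$. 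Both maps manifestly preserve $S_\infty$ (the paper's because it never touches the $X_2$-coordinates, yours because in characteristic $2$ the shift $d$ cancels in each sum $X_2+Y_2$), so both close the argument. Your variant is in fact slightly more robust: the paper's dilation fixes the point $(1,0,0)$, which lies on the conic, so its counting (via the quotient $b/a^2$, undefined at $a=0$) tacitly assumes $(1,0,0)\notin K$ and would need an extra preliminary translation in that corner case; your forbidden value $d=a^2+b$ is defined for every point of $K$, so no case distinction arises.
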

\begin{proof} We can assume that $K$ is affine as every arc has an external line, which
can be moved to $l_\infty$. By Lemma \ref{primo} we can also assume
that $1\notin S_\infty(K)$. When $(1,a,b)$ ranges over $K$,  the number of values of $b/a^2$
is at most $k$. Therefore, there exists an element $w$ in $\fq$ such that
$b\neq (wa)^2$ for every point $(1,a,b)\in K$. The projectivity
$\psi(X_0,X_1,X_2)=(X_0,wX_1,X_2)$ then maps $K$ onto an arc
$\psi(K)$ such that $Y_2\neq Y_1^2$ for each point $(1,Y_1,Y_2)\in
\psi(K)$. It is straightforward that $S_\infty(\psi(K))$ coincides
with $S_\infty(K)$, whence the lemma is proved.
\end{proof}

Let $K_0=\{(1,1),(1,0)\}$ be the trivial complete cap in $PG(1,q)$.
We consider the product cap $(K_0:\cP^{s+1})\subset PG(2s+3,q)$. Let
$H_0$ be the subspace of $PG(2s+3,q)$ of equation $X_0=0$.
\begin{lemma}\label{odd2} The cap $(K_0:\cP^{s+1})\subset PG(2s+3,q)$ covers all
the points in $PG(2s+3,q)\setminus H_0$. Points in $H_0$ covered by
$(K_0:\cP^{s+1})$ are precisely
\begin{eqnarray*}
(0,1,a _{1},a _{1}^{2},\ldots ,a _{s+1},a _{s+1}^{2}),\quad a
_{i} &\in &\fq.  \label{form_23} \\
(0,0,\ldots ,1,m,a _{l},ma _{l}^{2},\ldots ,a
_{s+1},ma_{s+1}^{2}),\quad l &\geq &2,m\in \fq^{\ast },a _{i}\in
\fq.
\label{form_24} \\
(0,0\ldots ,0,0,1,m),\quad m &\in &\fq^{\ast }.  \label{form_25}
\end{eqnarray*}%
\end{lemma}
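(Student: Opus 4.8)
The plan is to follow closely the pattern of Lemmas \ref{lem0} and \ref{lem3}, exploiting two features special to this base case. First, the product cap $(K_0:\cP^{s+1})$ lies entirely in the affine chart $X_0=1$, so it is disjoint from $H_0$ and every secant of it joins two points with $X_0=1$. Second, $K_0=\{(1,1),(1,0)\}$ spans the whole line $PG(1,q)$, so every point of $PG(1,q)$ lies on the unique secant of $K_0$; consequently, unlike in Cases (I)--(III), no sum-point or $S_m$-type restriction enters, and the projection onto the first two coordinates never obstructs covering.

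First I would prove that every $Q=(1,\alpha,c_1,c_1',\dots,c_{s+1},c_{s+1}')\in PG(2s+3,q)\setminus H_0$, written in normalized form, is covered. Projecting to $PG(1,q)$ and writing $(1,\alpha)=\gamma(1,1)+\gamma'(1,0)$ gives $\gamma=\alpha$, $\gamma'=\alpha+1$. If $\alpha\notin\{0,1\}$, then $\gamma,\gamma',\gamma+\gamma'$ are all nonzero, and for each $i$ one solves for $\lambda_i,\mu_i$ exactly as in Lemma \ref{lem0}: squaring the linear relation $c_i=\gamma\lambda_i+\gamma'\mu_i$ and pairing it with $c_i'=\gamma\lambda_i^2+\gamma'\mu_i^2$ yields a linear system in $\lambda_i^2,\mu_i^2$ of determinant $\gamma\gamma'(\gamma+\gamma')\neq0$; since $x\mapsto x^2$ is bijective on $\fq$, admissible $\lambda_i,\mu_i$ exist. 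If instead $(1,\alpha)\in K_0$, I would pick $\delta\notin\{0,1\}$ (possible as $q>2$) and use two points of $(K_0:\cP^{s+1})$ in the fibre over $(1,\alpha)$, the corresponding system now having determinant $\delta(\delta+1)\neq0$; again $\lambda_i,\mu_i$ exist. Hence $Q$ lies on a secant (or is itself a cap point), which proves the first assertion.

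For the covered points on $H_0$ I would intersect the secant through two cap points $Q_1=(1,\epsilon_1,\lambda_1,\lambda_1^2,\dots)$ and $Q_2=(1,\epsilon_2,\mu_1,\mu_1^2,\dots)$, with $\epsilon_1,\epsilon_2\in\{0,1\}$, with the hyperplane $X_0=0$; in characteristic two this intersection is $Q_1+Q_2=(0,\epsilon_1+\epsilon_2,\lambda_1+\mu_1,(\lambda_1+\mu_1)^2,\dots)$. Set $d_i=\lambda_i+\mu_i$. If $\epsilon_1\neq\epsilon_2$, the second coordinate is $1$ and, letting $a_i=d_i$ range freely over $\fq$, one obtains exactly the points $(0,1,a_1,a_1^2,\dots,a_{s+1},a_{s+1}^2)$. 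If $\epsilon_1=\epsilon_2$ (so $Q_1\neq Q_2$ forces some $d_i\neq0$), the point is $(0,0,d_1,d_1^2,\dots)$; taking $l$ minimal with $d_l\neq0$, normalizing by $m:=d_l$ and putting $a_i=d_i/d_l$ for $i>l$ gives the points $(0,\dots,0,1,m,a_{l+1},m a_{l+1}^2,\dots,a_{s+1},m a_{s+1}^2)$ with $m\in\fq^\ast$, together with the degenerate tail $(0,\dots,0,1,m)$ when $l=s+1$. Conversely each listed point arises from a suitable choice of $\epsilon_1,\epsilon_2,\lambda_i,\mu_i$, so the covered points on $H_0$ are precisely those displayed. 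I expect the only delicate point to be the coordinate bookkeeping in this last normalization — in particular, noticing that the square-slope in the tail is forced to equal $m$, because $d_i^2/d_l=m\,(d_i/d_l)^2$, which is exactly why no independent multiplier of the kind ($A\in S_m(K)$) appearing in Lemma \ref{lem3} shows up here.
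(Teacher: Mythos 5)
Your proposal is correct and follows essentially the same route as the paper: the paper also writes $(1,\gamma)=\gamma(1,1)+(1+\gamma)(1,0)$, handles $\gamma\notin\{0,1\}$ via the linear system in $\lambda_i^2,\mu_i^2$ from Lemma \ref{lem0} (and the fibre trick with $\delta\notin\{0,1\}$ when $(1,\gamma)\in K_0$), and then declares the determination of the covered points on $H_0$ ``analogous to Lemma \ref{lem3}'', which is exactly the intersection-and-normalization computation you carry out. Your explicit bookkeeping (in particular the observation that the tail multiplier is forced to be $m$ since $d_i^2/d_l=m(d_i/d_l)^2$, and that the coefficient $\epsilon_1+\epsilon_2=1$ explains the absence of the factor $A\in S_m(K)$) simply fills in details the paper leaves to the reader.
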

\begin{proof}
Let $Q=(1,\gamma,c_1,c_1',\ldots,c_s,c_{s}',c_{s+1},c_{s+1}')$ be
any point in $PG(2s+3,q)\setminus H_0$. Clearly
$$
(1,\gamma)=\gamma (1,1)+(1+\gamma) (1,0).
$$
By an argument analogous to that of the proof of Lemma \ref{lem0},
it can then be proved that  when $\gamma\notin \{0,1\}$ there exist
$\lambda_i$, $\mu_i$ in $\fq$ such that
$$
Q=\gamma(1,1,\lambda_1,\lambda_1^2,\ldots,\lambda_{s+1},\lambda_{s+1}^2)+(1+\gamma)(
1,0,\mu_1,\mu_1^2,\ldots,\mu_{s+1},\mu_{s+1}^2),
$$
 and that when $\gamma\in \{0,1\}$ there exist
$\delta,\lambda_i$, $\mu_i$ in $\fq$ such that
$$
Q=\delta(1,\gamma,\lambda_1,\lambda_1^2,\ldots,\lambda_{s+1},\lambda_{s+1}^2)+(1+\delta)(
1,\gamma,\mu_1,\mu_1^2,\ldots,\mu_{s+1},\mu_{s+1}^2).
$$
This proves that $(K_0:\cP^{s+1})$ covers all the points in
$PG(2s+3,q)\setminus H_0$. The proof of the second statement of the
Lemma is analogous to that of Lemma \ref{lem3}.
\end{proof}

Now, let $\cK\subset PG(2s+2,q)$ be as in Proposition \ref{PROP0}.
Let ${\bar \cK}$ be the natural embedding of $\cK$ in the hyperplane
$H_0$ of $PG(2s+3,q)$.

\begin{proposition}\label{PROPODD1} The set
$ (K_0:\cP^{s+1})\cup {\bar \cK} $ is a cap in $PG(2s+3,q)$.
\end{proposition}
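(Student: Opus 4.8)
The plan is to prove that no three points of $(K_0:\cP^{s+1})\cup\bar\cK$ are collinear by separating the possible triples according to how many of their members lie in each constituent. Both pieces are already caps: $(K_0:\cP^{s+1})$ is a product cap, and $\bar\cK$ is the image of the cap $\cK$ of Proposition \ref{PROP0} under a projective isomorphism onto $H_0$, which preserves collinearity. Moreover every point of $(K_0:\cP^{s+1})$ has first coordinate $X_0=1$ (the representatives of $K_0=\{(1,1),(1,0)\}$ are affine), so $(K_0:\cP^{s+1})$ is disjoint from $H_0$, whereas $\bar\cK\subset H_0$ by construction. Hence a line through two points of $\bar\cK$ lies entirely in the hyperplane $H_0$ and meets no point of $(K_0:\cP^{s+1})$; this settles every triple with two points in $\bar\cK$ and one in the product cap, and the all-in-one-constituent triples are excluded because each constituent is a cap.

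The only remaining configuration has two points $Q_1,Q_2\in(K_0:\cP^{s+1})$ and one point $R\in\bar\cK$. Since $Q_1,Q_2\notin H_0$, the line $\langle Q_1,Q_2\rangle$ meets $H_0$ in a single point $Q^{*}$, and collinearity forces $R=Q^{*}$. It therefore suffices to check that no point of $H_0$ lying on a bisecant of $(K_0:\cP^{s+1})$ belongs to $\bar\cK$. These covered points are listed explicitly in Lemma \ref{odd2}, and I would compare them with $\bar\cK$ block by block, separating the analysis according to the second coordinate $X_1$, noting that the image of $(K:\cP^s)$ contributes exactly the points of $\bar\cK$ with $X_1=1$, while $\overline{K_2(m_1,m_2)}$ contributes those with $X_1=0$.

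The covered points with $X_1=1$ form the family $(0,1,a_1,a_1^2,\dots,a_{s+1},a_{s+1}^2)$, to be matched against the images $(0,1,a,b,\lambda_1,\lambda_1^2,\dots,\lambda_s,\lambda_s^2)$ of $(K:\cP^s)$ with $(1,a,b)\in K$. Equating the leading block gives $a_1=a$ and $a_1^2=b$, i.e. $b=a^2$, which is ruled out by the defining property~{\rm(*)} that $Y_2\ne Y_1^2$ on $K$; so these families are disjoint. This step is routine and is precisely where hypothesis {\rm(*)} enters.

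The covered points with $X_1=0$ must be compared with $\overline{K_2(m_1,m_2)}$, that is, with $K^{(2s+1)}_{m_1,m_2}$ read off the blocks $(X_2,\dots,X_{2s+3})$; I expect this to be the main obstacle, since it requires the recursive shape of $K^{(2s+1)}_{m_1,m_2}$ in \eqref{m1m2}. Each such covered point has its leading $1$ in an even position, followed by a free entry $m\in\fq^{*}$ and then the pattern $(a_i,ma_i^2)$; a point of $K^{(2s+1)}_{m_1,m_2}$ whose leading $1$ sits in an even position is of $A_1$-type (reached through the $A_3$-recursion), so its next entry is $m_1$ or $m_2$ and it continues with the pure square pattern $(a_i,a_i^2)$, while the $A_2$-type points have their leading $1$ in an odd position and so cannot match at all. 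Matching the blocks forces $m\in\{m_1,m_2\}$ together with $ma_i^2=a_i^2$, hence either $m=1$, excluded because $m_i\ne1$, or all the $a_i$ vanish, in which case $(\dots,1,m,0,\dots,0)$ violates the nondegeneracy condition $(a_1,\dots)\ne(0,\dots)$ built into $A_1$ and is not in $K^{(2s+1)}_{m_1,m_2}$. A descent on the $A_3$-blocks, equivalently an induction on $s$, reduces the general case to these, so families (b) and (c) avoid $\bar\cK$. Combining the three disjointness statements gives $Q^{*}\notin\bar\cK$, no forbidden triple exists, and $(K_0:\cP^{s+1})\cup\bar\cK$ is a cap.
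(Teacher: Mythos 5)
Your strategy coincides with the paper's: the paper's own (very short) proof likewise reduces everything to the disjointness of ${\bar\cK}$ from the points of $H_0$ covered by $(K_0:\cP^{s+1})$, as listed in Lemma~\ref{odd2}, with property {\rm (*)} ruling out the family $(0,1,a_1,a_1^2,\ldots,a_{s+1},a_{s+1}^2)$. Your reduction to that disjointness (a line through two points of ${\bar\cK}$ lies inside $H_0$ and misses the product cap, since all points of $(K_0:\cP^{s+1})$ have $X_0=1$; a bisecant of the product cap meets $H_0$ in exactly one point, which is a covered point) is correct, and so is your treatment of the $X_1=1$ family via $Y_2\neq Y_1^2$.

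There is, however, one false claim in your analysis of the $X_1=0$ part, and it leaves some points of ${\bar\cK}$ unexamined. It is not true that every point of $K^{(2s+1)}_{m_1,m_2}$ whose leading $1$ is in an even position is of $A_1$-type: the recursion (\ref{m1m2}) plants a point $(1,0,\ldots,0)$ at every level, and only the top-level one is removed in $K_2^{\ast}(m_1,m_2)$. Consequently ${\bar\cK}$ contains the points $(0,0,0,0,1,0,\ldots,0)$, $(0,0,0,0,0,0,1,0,\ldots,0)$, and so on, down to the image of the base point $(1,0)\in K^{(1)}_{m_1,m_2}$; each has its leading $1$ in an even position but followed by $0$, not by $m_1$ or $m_2$, so these points fall outside both branches of your classification and your matching argument never reaches them. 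The gap closes in one line: by Lemma~\ref{odd2}, every covered point with $X_1=0$ has a nonzero entry $m\in\fq^{\ast}$ immediately after its leading $1$, whereas the points above have $0$ there, hence they are not covered. With that observation added, your case analysis is exhaustive and your proof is a correct, fleshed-out version of the argument the paper leaves implicit.
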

\begin{proof} By Lemma \ref{odd2} together with property (*) it follows that ${\bar \cK} $ is disjoint from the
set of points in $H_0$ covered by $(K_0:\cP^{s+1})$.  This proves
the assertion.
\end{proof}

\begin{theorem}\label{TEODD1} Let $M=2s+3$, $s\ge 1$, $q>8$. Assume that  {\rm{(*)}} holds. Then the set
$ (K_0:\cP^{s+1})\cup {\bar \cK} $ is a complete cap in $PG(M,q)$
of size $$2q^{\frac{M-1}{2}}+(k+3)\cdot q^\frac{M-3}{2}
+3(q^\frac{M-5}{2}+q^\frac{M-7}{2}+\ldots+q)-M+4.$$
\end{theorem}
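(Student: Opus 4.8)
The plan is to combine the covering and cardinality facts already established for the even-dimensional piece with the behavior of the odd-dimensional product cap, in complete analogy with the proofs of Theorems~\ref{TEO1} and \ref{TEO2}. First I would establish that the set is a cap: this is exactly the content of Proposition~\ref{PROPODD1}, which uses Lemma~\ref{odd2} together with property~(*) to guarantee that the embedded even-dimensional cap ${\bar \cK}$ meets $H_0$ in points disjoint from those covered by $(K_0:\cP^{s+1})$. So the cap property is free.

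Next I would prove completeness by a covering argument split according to the hyperplane $H_0$ of equation $X_0=0$. By Lemma~\ref{odd2}, every point of $PG(M,q)\setminus H_0$ is covered by $(K_0:\cP^{s+1})$, so it remains to cover the points of $H_0$. Since ${\bar \cK}$ is the natural embedding of the cap $\cK\subset PG(2s+2,q)$ of Proposition~\ref{PROP0}, and since $\cK$ (or its completion $\cK'$) is complete in $PG(2s+2,q)$ by Theorem~\ref{TEO1}, every point of the hyperplane $H_0\cong PG(2s+2,q)$ is either in ${\bar \cK}$ or covered by a secant of ${\bar \cK}$, or else covered by $(K_0:\cP^{s+1})$ via the points listed in Lemma~\ref{odd2}. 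The one subtlety is that Theorem~\ref{TEO1} yields completeness only when $Cov_\infty(K)=\fq$; when one or two points of $l_\infty$ are uncovered, completeness there required adjoining the extra points $(0,1,m_0,0,\ldots,0)$. I would check that under property~(*) these residual points of $H_0$ of type $(0,1,m,0,\ldots,0)$ are precisely the points covered by $(K_0:\cP^{s+1})$ according to the first family in Lemma~\ref{odd2}, so no extra points are needed and the union is already complete.

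Finally I would verify the size formula by straightforward counting. The product cap $(K_0:\cP^{s+1})$ has size $2\cdot q^{s+1}=2q^{\frac{M-1}{2}}$, since $|K_0|=2$ and $|\cP^{s+1}|=q^{s+1}$. The embedded cap ${\bar \cK}$ contributes the size of $\cK$ from Theorem~\ref{TEO1}, namely $(k+3)q^{\frac{M-3}{2}}+3(q^{\frac{M-5}{2}}+\ldots+q)-(M-1)+3$, with $M-1=2s+2$ playing the role of the ambient even dimension; adding the two pieces and simplifying $-(M-1)+3+0 = -M+4$ gives the stated total. The main obstacle I expect is bookkeeping in the completeness step: one must confirm that property~(*), in particular the condition $Y_2\ne Y_1^2$, is exactly what prevents the embedded cap from clashing with the product cap's covered points in $H_0$, and that the points of the form $(0,1,m,0,\ldots,0)$ left uncovered inside the hyperplane are recovered from the ambient product cap rather than requiring augmentation. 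Once that matching is verified, the result follows.
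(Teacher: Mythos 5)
Your outline coincides with the paper's own proof: the cap property is exactly Proposition \ref{PROPODD1}, the points of $PG(M,q)\setminus H_0$ are covered by $(K_0:\cP^{s+1})$ by Lemma \ref{odd2}, the points of $H_0$ are handled via the completeness of $\cK$ in $PG(2s+2,q)$ from Theorem \ref{TEO1}, and your size count ($2q^{\frac{M-1}{2}}$ for the product cap plus the size of $\cK$ from Theorem \ref{TEO1} read with ambient dimension $M-1$, the two pieces being disjoint) is correct.

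However, the way you resolve the one subtlety you flag is not right, and the correct resolution is different and simpler. In Section \ref{secODD} the standing hypothesis is that $K$ is a \emph{complete} $k$-arc satisfying {\rm (*)}; since {\rm (*)} forces $K$ to be affine, completeness of $K$ implies that every point of $l_\infty$ lies on a secant of $K$, i.e. $Cov_\infty(K)=\fq$. Hence the first bullet of Theorem \ref{TEO1} applies, $\cK$ itself (not $\cK'$, which is just as well, since ${\bar \cK}$ is defined as the embedding of $\cK$) is complete in $PG(2s+2,q)$, and residual points of type $(0,1,m_0,0,\ldots,0)$ simply do not exist. Your proposed fallback check cannot substitute for this observation, because it is false as stated: a residual point $(0,1,m_0,0,\ldots,0)$ of $PG(2s+2,q)$ embeds in $H_0$ as $(0,0,1,m_0,0,\ldots,0)$, which never lies in the \emph{first} family of Lemma \ref{odd2} (those points have $X_1=1$, while the embedded point has $X_1=0$). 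For $m_0\in\fq^{\ast}$ it lies in the \emph{second} family (take $l=2$ and all $a_i=0$), but for $m_0=0$ it lies in none of the three families, since the second and third require $m\in\fq^{\ast}$; moreover, a secant joining a point of ${\bar \cK}$ to a point of $(K_0:\cP^{s+1})$ meets $H_0$ only in that point of ${\bar \cK}$, so nothing else can cover it. In other words, if $K$ were merely affinely complete with $0\notin Cov_\infty(K)$, the union $(K_0:\cP^{s+1})\cup{\bar \cK}$ would genuinely fail to be complete. The completeness step must therefore pass through the fact that {\rm (*)}-arcs, being complete and affine, satisfy $Cov_\infty(K)=\fq$; once that is said, your argument closes and agrees with the paper's.
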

\begin{proof} By Proposition \ref{PROPODD1} the set $ (K_0:\cP^{s+1})\cup {\bar \cK} $ is a cap, which is complete by Lemma \ref{odd2}
and Theorem \ref{TEO1}.
\end{proof}

{}From now we assume that $K$ is a $k$-arc in $PG(2,q)$ satisfying
property (IIIa) of the previous section, and that $\cK\subset
PG(2s+2,q)$ is as in Proposition \ref{PROP2}. Let ${\bar \cK'}$ be
the natural embedding of $\cK$ in the hyperplane $H_0$ of
$PG(2s+3,q)$.
\begin{proposition}\label{PROPODD2} The set
$ (K_0:\cP^{s+1})\cup {\bar \cK'} $ is a cap in $PG(2s+3,q)$.
\end{proposition}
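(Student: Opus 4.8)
The plan is to reduce the statement, exactly as in the proof of Proposition \ref{PROPODD1}, to a single disjointness assertion. First I would record two structural facts: $(K_0:\cP^{s+1})$ is a cap, being the product of a representative set of the complete cap $K_0$ with the cap $\cP^{s+1}$; and ${\bar \cK'}$ is a cap, being the image under the collinearity-preserving natural embedding of the cap $\cK$ of Proposition \ref{PROP2}. Since every point of $(K_0:\cP^{s+1})$ has first coordinate equal to $1$, whereas ${\bar \cK'}\subset H_0=\{X_0=0\}$, the two sets are disjoint and in fact $(K_0:\cP^{s+1})\cap H_0=\emptyset$.

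Next I would rule out a collinear triple. A triple lying entirely in one of the two sets is excluded because each set is a cap. A triple with two points in ${\bar \cK'}$ and one in $(K_0:\cP^{s+1})$ is impossible, since two points of the hyperplane $H_0$ span a line contained in $H_0$, forcing the third point into $H_0$, against $(K_0:\cP^{s+1})\cap H_0=\emptyset$. Hence the only case to exclude is a triple with two points $A,B\in(K_0:\cP^{s+1})$ and one point in ${\bar \cK'}$; there the point of ${\bar \cK'}$ is the unique point $A+B$ of $H_0$ on the line, i.e. a point of $H_0$ covered by $(K_0:\cP^{s+1})$. Thus it suffices to prove that ${\bar \cK'}$ is disjoint from the list of $H_0$-covered points given explicitly in Lemma \ref{odd2}.

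To establish this disjointness I would compare, coordinatewise, each covered point of Lemma \ref{odd2} with each embedded point $(0,Y_0,\ldots,Y_{2s+2})$, $(Y_0,\ldots,Y_{2s+2})\in\cK$, splitting $\cK=(K:\cP^s)\cup\bigcup_{j=0}^{s-1}K^{(j)}$ into its pieces and, within each piece, according to whether the underlying point of $K$ is affine or one of the two points $(0,0,1),(0,1,0)$ of $K\cap l_\infty$. The decisive input is property (IIIa): since $\{(0,0,1),(0,1,0)\}\subset K$, the pair $(0,0,1),(0,1,0)$ shows $1\in S_1(K)$, so by (\ref{caso3}) one has $K\cap\{(1,a,a^2)\mid a\in\fq\}=\emptyset$. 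For an affine point $(1,x,y)\in K$, matching its embedded image (or its image under $\Phi_j$) with a covered point of Lemma \ref{odd2} forces the normalization scalar to be $1$ and then the conic relation $y=x^2$, which is precisely what this property forbids.

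The remaining cases I would handle by bookkeeping the position of the first nonzero coordinate together with the pattern $(1,m,\ldots)$, $m\neq 0$, of the covered points: the covered points have their leading nonzero entry either at $X_1$ or at an even position, and it is immediately followed by a nonzero $m$. The images of $(0,0,1)$ turn out to have first nonzero coordinate in an odd position exceeding $1$, which never occurs among the covered points, while the images of $(0,1,0)$ place the leading $1$ in a position immediately followed by a $0$, contradicting the forced $m\neq 0$. I expect the main obstacle to be exactly this bookkeeping: one must track how prepending $X_0=0$ and applying the embeddings $\Phi_j$ shift the block structure, and check in each of the finitely many cases that the first-nonzero position, or one of the forced equalities $m=0$ or $y=x^2$, yields a contradiction. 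Once all cases are verified, the asserted disjointness, and hence the proposition, follows.
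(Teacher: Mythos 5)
Your proposal is correct and takes essentially the same route as the paper: the paper's (much terser) proof likewise reduces the claim to showing that ${\bar \cK'}$ is disjoint from the points of $H_0$ covered by $(K_0:\cP^{s+1})$ as listed in Lemma \ref{odd2}, with the decisive input being $K\cap\{(1,a,a^2)\mid a\in\fq\}=\emptyset$, which is exactly the consequence of (IIIa) you identify. The coordinate bookkeeping you outline (first-nonzero positions, the forced $m\neq 0$, and the conic relation $y=x^2$) is precisely the verification the paper leaves implicit, and your case analysis carries it through correctly.
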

\begin{proof} Note that the plane arc $K$ is disjoint from points $\{(1,a,a^2)\mid a \in \fq\}$. Then
from Lemma \ref{odd2} it follows that the cap ${\bar \cK'} $ is
disjoint from the set of points in $H_0$ covered by
$(K_0:\cP^{s+1})$.  This proves the assertion.
\end{proof}

\begin{theorem}\label{TEODD2} Let $M=2s+3$, $s\ge 1$. Assume that \rm{(IIIa)} holds. Then the set
$ (K_0:\cP^{s+1})\cup {\bar \cK'} $ is a complete cap in
$PG(2s+3,q)$ of size
$$
2q^{\frac{M-1}{2}}+k(q^{\frac{M-3}{2}}+q^{\frac{M-5}{2}%
}+q^{\frac{M-7}{2}}+\ldots +q+1).
$$
\end{theorem}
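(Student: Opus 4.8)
The plan is to mirror exactly the structure of the proof of Theorem~\ref{TEODD1}, replacing the appeal to Theorem~\ref{TEO1} with the corresponding result for Case (III), namely Theorem~\ref{TEO3}. The two ingredients are already assembled: Proposition~\ref{PROPODD2} establishes that the set $(K_0:\cP^{s+1})\cup{\bar\cK'}$ is a cap, so the only substantive tasks remaining are to verify completeness and to compute the size. Accordingly, I would open with the one-line observation that the cap property is \emph{given} by Proposition~\ref{PROPODD2}, and then devote the argument to covering and counting.

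For completeness, the key step is to show that every point of $PG(2s+3,q)$ lies on a secant of the cap. I would partition $PG(2s+3,q)$ into $PG(2s+3,q)\setminus H_0$ and $H_0$. By Lemma~\ref{odd2}, the product cap $(K_0:\cP^{s+1})$ covers all points outside $H_0$, as well as all points of $H_0$ except those of the three listed forms. It therefore suffices to check that every point of $H_0$ is covered: the points of $H_0$ not of the exceptional forms are already covered by $(K_0:\cP^{s+1})$, while the exceptional points of $H_0$ are covered by ${\bar\cK'}$, since ${\bar\cK'}$ is the embedding of the complete cap $\cK\subset PG(2s+2,q)$ of Theorem~\ref{TEO3} into the hyperplane $H_0\cong PG(2s+2,q)$. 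Completeness of $\cK$ in $H_0$ thus guarantees that these remaining points of $H_0$ are covered, and hence the whole space is covered.

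For the size, I would simply add the contributions. The product cap $(K_0:\cP^{s+1})$ has $2\,q^{s+1}=2\,q^{\frac{M-1}{2}}$ points, since $K_0$ contributes a factor $2$ and each of the $s+1$ copies of $\cP$ contributes a factor $q$. The embedded cap ${\bar\cK'}$ has the same cardinality as $\cK$, which by Theorem~\ref{TEO3} is
\[
k\bigl(q^{\frac{(2s+2)-2}{2}}+q^{\frac{(2s+2)-4}{2}}+\ldots+q+1\bigr)
= k\bigl(q^{\frac{M-3}{2}}+q^{\frac{M-5}{2}}+q^{\frac{M-7}{2}}+\ldots+q+1\bigr).
\]
Adding the two disjoint contributions (they are disjoint because ${\bar\cK'}\subset H_0$ while $(K_0:\cP^{s+1})\cap H_0=\emptyset$) yields precisely the claimed total
\[
2q^{\frac{M-1}{2}}+k\bigl(q^{\frac{M-3}{2}}+q^{\frac{M-5}{2}}+q^{\frac{M-7}{2}}+\ldots+q+1\bigr).
\]

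I expect no genuine obstacle here: all the hard work is already encapsulated in Proposition~\ref{PROPODD2} (cap property), Lemma~\ref{odd2} (covering behaviour of the product cap), and Theorem~\ref{TEO3} (completeness and size of $\cK$). The only point requiring a moment of care is the verification that the exceptional uncovered points of $H_0$ identified in Lemma~\ref{odd2} are exactly the points that the completeness of $\cK$ takes care of after embedding via $\Phi$; this matching of coordinate forms is routine but must be stated, since it is what allows the completeness in dimension $2s+2$ to lift cleanly to dimension $2s+3$.
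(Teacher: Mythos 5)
Your overall architecture is exactly the paper's: the paper proves Theorem \ref{TEODD2} by the one-line observation that the claim follows from Lemma \ref{odd2} together with Theorem \ref{TEO3}, with the cap property supplied by Proposition \ref{PROPODD2}; your size computation ($2q^{s+1}$ for the product cap, $|\cK|$ from Theorem \ref{TEO3}, disjointness since ${\bar\cK'}\subset H_0$ while $(K_0:\cP^{s+1})\cap H_0=\emptyset$) is correct. However, there is one concrete error: you have read Lemma \ref{odd2} backwards. The three displayed families in that lemma are \emph{precisely the points of $H_0$ that are covered} by $(K_0:\cP^{s+1})$, not the exceptions; inside $H_0$ the product cap covers \emph{only} those points. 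Consequently your claim that ``the points of $H_0$ not of the exceptional forms are already covered by $(K_0:\cP^{s+1})$'' is false --- those are exactly the points the product cap fails to reach, and as written your proof gives no valid justification for their coverage.

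The proof survives because the other mechanism you invoke does all the work in $H_0$ on its own. Since $\cK$ is complete in $PG(2s+2,q)$ by Theorem \ref{TEO3}, every point of $PG(2s+2,q)$ lies in $\cK$ or on a secant of $\cK$; under the identification $H_0\cong PG(2s+2,q)$, and because any line through two points of the hyperplane $H_0$ lies entirely in $H_0$, every point of $H_0$ lies in ${\bar\cK'}$ or on a secant of ${\bar\cK'}$. So the correct division of labor is: $(K_0:\cP^{s+1})$ covers $PG(2s+3,q)\setminus H_0$ (first statement of Lemma \ref{odd2}), and ${\bar\cK'}$ covers \emph{all} of $H_0$ (completeness in Theorem \ref{TEO3}); the second statement of Lemma \ref{odd2} --- the list of covered points --- is needed only for the \emph{cap} property, i.e.\ inside Proposition \ref{PROPODD2}, which you quote rather than reprove. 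With that one sentence repaired, your argument coincides with the paper's proof.
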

\begin{proof}  The claim follows from Lemma \ref{odd2} together with
Theorem \ref{TEO3}.
\end{proof}

By Theorem \ref{TEODD2}, taking into account
 the values of $\overline{t}%
_{2}^{\ast }(2,q)$ from Table 1,  we obtain complete
$k_{5,q}$-caps in $ PG(5,q)$ with the following sizes (the best
known sizes from \cite{Gi,GiulPast,PS} are given in
parentheses):
 $k_{5,128}=2q^{2}+34q+34$ $
(2q^{2}+35q+2),$ $k_{5,256}=2q^{2}+55q+55$ $(2q^{2}+67q+2),$ $
k_{5,512}=2q^{2}+86q+86$ $(2q^{2}+131q+2)$,
$k_{4,1024}=2q^{2}+124q+124$ $(2q^{2}+131q+2)$,
$k_{4,2048}=2q^{2}+201q+201$ $(2q^{2}+259q+2)$,
$k_{4,4096}=2q^{2}+307q+307$ $(2q^{2}+515q+2)$.  See also the
second table in Introduction.

\subsection{New upper bounds on $t_2(N,q)$ for $N$ odd, $q$ even}

\begin{theorem}\label{NEW4} Let $N$ be odd, $N>3$, and let $q$ be even. If $q>8$, then
$$
t_2(N,q)\le 2q^{\frac{N-1}{2}}+(t_2(2,q)+3)\cdot q^\frac{N-3}{2}
+3(q^\frac{N-5}{2}+q^\frac{N-7}{2}+\ldots+q)-N+4.
$$
\end{theorem}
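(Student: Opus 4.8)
The plan is to obtain this bound as an immediate consequence of the odd-dimensional construction in Theorem \ref{TEODD1}, exactly as Theorems \ref{NEW1}--\ref{NEW3} read off their even-dimensional bounds from Theorems \ref{TEO1}--\ref{TEO3}. First I would fix $N$ odd with $N>3$ and write $N=2s+3$ with $s=(N-3)/2\ge 1$, so that Theorem \ref{TEODD1} applies with $M=N$. I would then take $K$ to be a complete arc in $PG(2,q)$ of minimal size, so that $k=|K|=t_2(2,q)$. The whole strategy is to feed this smallest arc into the construction and compute the size of the resulting cap.

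The second step is to put $K$ into the normalized form required by Theorem \ref{TEODD1}, namely property {\rm (*)}. This is precisely the content of Lemma \ref{odd1}: any complete $k$-arc with $k<q-5$ is projectively equivalent to a $k$-arc satisfying {\rm (*)}. Since a projectivity preserves both the size of an arc and the property of giving rise to a complete cap, replacing $K$ by a projectively equivalent arc does not affect the bound. Applying Theorem \ref{TEODD1} to this arc then yields a complete cap in $PG(N,q)$ of size
$$
2q^{\frac{N-1}{2}}+(k+3)\cdot q^{\frac{N-3}{2}}+3(q^{\frac{N-5}{2}}+q^{\frac{N-7}{2}}+\ldots+q)-N+4,
$$
with $k=t_2(2,q)$. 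As $t_2(N,q)$ is by definition the minimum size of a complete cap in $PG(N,q)$, this gives exactly the claimed inequality, so the bulk of the proof is a direct citation of Theorem \ref{TEODD1} together with Lemma \ref{odd1}.

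The one hypothesis that must be checked, and which I expect to be the only (minor) obstacle, is the condition $k<q-5$ needed to invoke Lemma \ref{odd1} for the minimal arc. Since $q$ is a power of $2$ and $q>8$, we have $q\ge 16$. For $q\ge 32$, Lemma \ref{Lem_q/2+2} furnishes a complete arc of size at most $(q+4)/2$, and $(q+4)/2<q-5$ holds whenever $q>14$; for the remaining case $q=16$, Table 1 gives $t_2(2,16)\le 9<11=q-5$. Hence $t_2(2,q)<q-5$ for every even $q>8$, so the minimal arc indeed satisfies the hypothesis of Lemma \ref{odd1} and the argument goes through for all admissible $N$ and $q$.
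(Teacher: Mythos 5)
Your proposal is correct and follows exactly the paper's own route: the paper proves Theorem \ref{NEW4} by citing Theorem \ref{TEODD1} together with Lemma \ref{odd1}, which is precisely your argument. Your additional verification that $t_2(2,q)<q-5$ for all even $q>8$ (via Lemma \ref{Lem_q/2+2} for $q\ge 32$ and Table 1 for $q=16$) is a detail the paper leaves implicit, and it is checked correctly.
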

\begin{proof} The assertion follows from Theorem \ref{TEODD1} together with Lemma \ref{odd1}.
\end{proof}

\begin{theorem}\label{NEW5} Let $N$ be odd, $N>3$, and let $q$ be even. Then
$$
t_2(N,q)\le
2q^{\frac{N-1}{2}}+t_2^{S^+}(2,q)(q^{\frac{N-3}{2}}+q^{\frac{N-5}{2}%
}+q^{\frac{N-7}{2}}+\ldots +q+1),
$$
where $t_2^{S^+}(2,q)$ is the size of the smallest complete
$k$-arc $K$ in $PG(2,q)$ with only one sum-point and with the
property that $(k-2)p(K)<q-1$.
\end{theorem}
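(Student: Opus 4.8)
The plan is to derive Theorem \ref{NEW5} as a direct corollary of the odd-dimensional construction in Theorem \ref{TEODD2}, exactly as Theorem \ref{NEW4} is derived from Theorem \ref{TEODD1}. The strategy mirrors how Theorem \ref{NEW3} packages Theorem \ref{TEO3}: one takes the minimizing plane arc, applies the normalization lemma to put it in the required form, feeds it into the general cap construction, and reads off the size.

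First I would let $K$ be a complete $k$-arc in $PG(2,q)$ realizing the smallest known size $t_2^{S^+}(2,q)$, meaning $K$ has exactly one sum-point, $\beta(K)=1$, and satisfies $(k-2)p(K)<q-1$. By Lemma \ref{Lem_forLaScala}, there is a collineation $\psi$ such that $\psi(K)$ satisfies property (IIIa): namely $\psi(K)$ is complete with $\beta(\psi(K))=1$, $\psi(K)\cap l_\infty=\{(0,0,1),(0,1,0)\}$, the only sum-point is $(0,1,1)$, and the disjointness condition \eqref{caso3} holds. Since collineations preserve arc size and completeness, $\psi(K)$ is again a complete $k$-arc, so without loss of generality $K$ itself satisfies (IIIa).

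Next I would invoke Theorem \ref{TEODD2} with this $K$. Writing $M=2s+3$ and applying the construction $(K_0:\cP^{s+1})\cup\bar{\cK'}$, Theorem \ref{TEODD2} guarantees a \emph{complete} cap in $PG(M,q)$ of size
$$
2q^{\frac{M-1}{2}}+k\bigl(q^{\frac{M-3}{2}}+q^{\frac{M-5}{2}}+q^{\frac{M-7}{2}}+\ldots+q+1\bigr).
$$
Setting $M=N$ (any odd $N>3$ has the form $2s+3$ with $s\ge 1$) and substituting $k=t_2^{S^+}(2,q)$ yields a complete cap of the stated size. Since $t_2(N,q)$ is by definition the smallest size of a complete cap in $PG(N,q)$, the existence of this cap gives precisely the claimed upper bound.

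The only subtlety to verify is that the normalization of Lemma \ref{Lem_forLaScala} does not alter the quantity $k$ entering the size formula, but this is immediate because $\psi$ is a collineation and hence bijective on points, so $|\psi(K)|=|K|=k$. Thus there is no genuine obstacle here; the content of the theorem lies entirely in Theorem \ref{TEODD2} and Lemma \ref{Lem_forLaScala}, and the proof reduces to a one-line citation. Accordingly I would simply write: \emph{The assertion follows from Theorem \ref{TEODD2} together with Lemma \ref{Lem_forLaScala}.}
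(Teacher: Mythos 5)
Your proposal is correct and is exactly the paper's proof: the paper disposes of Theorem \ref{NEW5} with the single line ``The assertion follows from Theorem \ref{TEODD2}, together with Lemma \ref{Lem_forLaScala},'' and your argument just spells out the routine details (normalizing the minimizing arc via Lemma \ref{Lem_forLaScala} to achieve property (IIIa), then feeding it into Theorem \ref{TEODD2}) that the authors left implicit. The only nitpick is your phrase ``smallest \emph{known} size'': as stated in the theorem, $t_2^{S^+}(2,q)$ is the smallest such size, not the smallest known one, but this does not affect the argument.
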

\begin{proof} The assertion follows from Theorem \ref{TEODD2},
together with Lemma \ref{Lem_forLaScala}.
\end{proof}

\section{Appendix: Proof of Proposition \ref{chiave}}\label{tecn}
To prove Proposition \ref{chiave}, some ideas and results on
translation caps from \cite{Gi} will be useful. A {\em translation
cap} in an affine space $AG(M,q)$ is a cap corresponding to a
coset of an additive subgroup of $\fq^M$. The prototype of a
translation cap in $AG(2,q)$ is the parabola $\cP=\{(a,a^2)\mid a
\in \fq\}$. The cartesian product of translation caps is still a
translation cap, see \cite[Lemma 2.7]{Gi}, whence the cap $\cP^i$
 defined as in (\ref{cart}) is a translation cap.

Let $m_1,m_2\in \fq^*$, with $m_1\ne m_2$, $(m_1+m_2)^3\ne 1$,
$m_i\ne 1$. Let $K_{m_1,m_2}^{(2s+1)}$, $A_1^{(2s+1)}$,
$A_2^{(2s+1)}$, $A_3^{(2s+1)}$ be as in Section \ref{main}.

Throughout this section,  let $(X_1,X_2,\ldots,X_{2s+2})$ denote
homogenous coordinates for the points in $PG(2s+1,q)$. Also, let
$L_1$ be the hyperplane of equation $X_1=0$, and $L_2$ be the
$(2s-1)$-dimensional subspace of equations $X_1=X_2=0$. Denote
$U=(1,0,\ldots ,0)$.

\begin{lemma}[{\cite[Proposition 2.5]{Gi}}]\label{GI1} Let $q>2$. Then through every point
in  $AG(2i,q)\setminus \cP^i$ there pass exactly $\frac{q-2}{2}$
secants of $\cP^i$.
\end{lemma}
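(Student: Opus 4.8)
The plan is to reduce the incidence count to a counting problem over $\fq$ and then exploit the structure of the Artin--Schreier map $t\mapsto t^2+t$. Write a point of $AG(2i,q)$ as $Q=(x_1,y_1,\ldots,x_i,y_i)$ and set $d_j=y_j+x_j^2$ for $j=1,\ldots,i$; thus $Q\in\cP^i$ precisely when all $d_j=0$, so for $Q\notin\cP^i$ at least one $d_j$ is nonzero. A secant of $\cP^i$ joins two distinct points $P=(a_1,a_1^2,\ldots,a_i,a_i^2)$ and $P'=(b_1,b_1^2,\ldots,b_i,b_i^2)$; since $q$ is even, its direction $P'+P$ has $j$-th pair $(a_j+b_j,(a_j+b_j)^2)$, so writing $c_j=a_j+b_j$ the generic point on the line is $P+t(P'+P)$, with $j$-th pair $(a_j+tc_j,\,a_j^2+tc_j^2)$ for $t\in\fq$.

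First I would impose $Q=P+t(P'+P)$ and eliminate $a_j$: the first coordinate gives $a_j=x_j+tc_j$, and substituting into the second yields, after expansion in characteristic $2$,
\[
d_j=y_j+x_j^2=(t^2+t)\,c_j^2,\qquad j=1,\ldots,i.
\]
Hence a secant through $Q$ is encoded by a scalar $s=t^2+t$ together with the direction data $(c_1,\ldots,c_i)$ subject to $d_j=s\,c_j^2$. The point $Q$ lies strictly between the two arc points exactly when $t\notin\{0,1\}$, which is equivalent to $s\neq 0$ because $t^2+t=t(t+1)$.

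The core step is to establish a bijection between the secants of $\cP^i$ through $Q$ and the nonzero values $s$ attained by $t\mapsto t^2+t$. Given such an $s\neq 0$, the relations $c_j^2=d_j/s$ determine each $c_j$ uniquely, since the Frobenius $x\mapsto x^2$ is a bijection of $\fq$; because some $d_j\neq 0$, the resulting direction is nonzero, so $P\neq P'$ and we obtain a genuine secant, while $a_j=x_j+tc_j$ recovers $P$, whence $P'$ via $b_j=a_j+c_j$. The two preimages $t$ and $t+1$ of a given $s$ merely interchange $P$ and $P'$, so they produce the same (unordered) secant; conversely, distinct values of $s$ force distinct $c_j$ at any index with $d_j\neq 0$, hence distinct directions and distinct secants. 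This makes the correspondence a bijection.

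It then remains to count the nonzero values of $t\mapsto t^2+t$. I would use that this map is two-to-one (its kernel is $\{0,1\}$) with image equal to the kernel of the trace $\mathrm{Tr}_{\fq/\mathbb{F}_2}$, a hyperplane of $\fq$ of cardinality $q/2$; since $0$ lies in the image (as $t^2+t=0$ iff $t\in\{0,1\}$), the number of nonzero values is $q/2-1=\frac{q-2}{2}$. By the bijection this is exactly the number of secants through $Q$, which proves the claim, and the hypothesis $q>2$ guarantees the count is a positive integer. The only delicate point is the bijection itself: one must check that $Q\notin\cP^i$ is precisely what is needed both to exclude the degenerate direction $(c_1,\ldots,c_i)=(0,\ldots,0)$ and to separate secants associated with different values of $s$.
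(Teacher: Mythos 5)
Your proof is correct. Note that this paper does not actually prove the lemma: it is quoted directly from \cite[Proposition 2.5]{Gi}, so there is no internal argument to compare against, and your write-up supplies the missing proof — which is essentially the natural (and, in substance, the cited) one. The reduction of collinearity of $Q$ with two points of $\cP^i$ to the relations $d_j=(t^2+t)c_j^2$, the identification of secants through $Q$ with the nonzero values of the Artin--Schreier map $t\mapsto t^2+t$, and the count $|\ker \mathrm{Tr}_{\fq/\mathbb{F}_2}|-1=\frac{q}{2}-1=\frac{q-2}{2}$ are all accurate. One sentence deserves tightening: from ``distinct values of $s$ force distinct $c_j$'' you conclude ``distinct directions and distinct secants,'' but two direction vectors span the same line through $Q$ when they are proportional, not only when they are equal. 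The repair is immediate: if $(c_1',c_1'^2,\ldots,c_i',c_i'^2)=\lambda(c_1,c_1^2,\ldots,c_i,c_i^2)$, then at an index with $c_j\neq 0$ one gets $\lambda^2 c_j^2=\lambda c_j^2$, whence $\lambda=1$ and then $s=s'$. Alternatively, and more cleanly, observe that a secant through $Q$ determines its two intersection points with $\cP^i$ (since $\cP^i$ is a cap), and these determine $s$ via $d_j=s\,c_j^2$ at an index with $d_j\neq 0$; your explicit reconstruction of $P$ and $P'$ from $s$ then shows the two assignments are mutually inverse, with no appeal to directions at all.
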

\begin{lemma}\label{GI2}  Let $h_1,h_2$ be any distinct
elements in $\fq$. Let $C^{(i)}_{h_1,h_2}\subset AG(2i+1,q)$ be
the cartesian product of $\{h_1,h_2\}$ by $\cP^i$.  Then
$C^{(i)}_{h_1,h_2}$ is a cap. Moreover, through every point in
$AG(2i+1,q)\setminus C^{(i)}_{h_1,h_2}$ there pass at least one
secant of $C^{(i)}_{h_1,h_2}$.
\end{lemma}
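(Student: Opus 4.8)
The plan is to prove the two assertions separately: the cap property by a pigeonhole argument on the first coordinate, and the covering property by splitting on whether that coordinate equals $h_1$ or $h_2$.

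For the cap property, I write a point of $C^{(i)}_{h_1,h_2}$ as $(h,\mathbf{p})$ with $h\in\{h_1,h_2\}$ and $\mathbf{p}\in\cP^i$. Given three distinct such points, two of them must share the same value of $h$, since only two values are admissible. If all three share it, they lie in the hyperplane $X_1=h$ and project onto three distinct points of $\cP^i$; as the projection is affine and $\cP^i$ is a cap, the three points cannot be collinear. If exactly two share the value $h$, then the line joining those two lies entirely in the hyperplane $X_1=h$, while the third point has first coordinate equal to the other value of $h$ and hence is not on that line. Thus no three points of $C^{(i)}_{h_1,h_2}$ are collinear.

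For the covering property, I fix $Q=(t,\mathbf{q})\notin C^{(i)}_{h_1,h_2}$ and distinguish two cases. If $t=h_1$ (the case $t=h_2$ being symmetric), then $\mathbf{q}\notin\cP^i$, for otherwise $Q$ would lie in the cap; by Lemma \ref{GI1} (as $q\ge 4$ we have $\frac{q-2}{2}\ge 1$) some secant of $\cP^i$ passes through $\mathbf{q}$, and lifting its two endpoints to the level $X_1=h_1$ yields two cap points whose joining line passes through $Q$. The main work is the case $t\notin\{h_1,h_2\}$, where any secant through $Q$ must join a point at level $h_1$ to one at level $h_2$. I set $\mu=\frac{t-h_1}{h_2-h_1}\in\fq\setminus\{0,1\}$ and seek $\mathbf{p}_1,\mathbf{p}_2\in\cP^i$ with $\mathbf{q}=(1+\mu)\mathbf{p}_1+\mu\mathbf{p}_2$; then $(h_1,\mathbf{p}_1)$ and $(h_2,\mathbf{p}_2)$ are distinct cap points and $Q$ is the third point on their line.

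Writing $\mathbf{p}_1,\mathbf{p}_2$ via parameters $a_j,b_j$, the condition splits coordinatewise into $(1+\mu)a_j+\mu b_j=q_j$ and $(1+\mu)a_j^2+\mu b_j^2=q_j'$ for $j=1,\dots,i$. Squaring the first equation and using characteristic $2$ gives $(1+\mu)^2a_j^2+\mu^2 b_j^2=q_j^2$, so $a_j^2,b_j^2$ solve a linear $2\times 2$ system whose determinant equals $\mu(1+\mu)\ne 0$; the system has a unique solution, and since $x\mapsto x^2$ is a bijection of $\fq$ this determines $a_j,b_j$ uniquely. The identity $(1+\mu)^2a_j^2+\mu^2b_j^2=\bigl((1+\mu)a_j+\mu b_j\bigr)^2$ then shows that the recovered $a_j,b_j$ also satisfy the linear equation, so the two conditions are consistent. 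This produces the desired $\mathbf{p}_1,\mathbf{p}_2$ and finishes the argument. The delicate step, and the one where the even characteristic is essential, is precisely this consistency between the linear and quadratic conditions together with the nonvanishing of $\mu(1+\mu)$, which is exactly what forces the values $t\in\{h_1,h_2\}$ to be treated separately via Lemma \ref{GI1}.
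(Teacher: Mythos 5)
Your proof is correct, but it takes a genuinely different route from the paper. The paper disposes of this lemma in two lines: it invokes \cite[Proposition 2.9]{Gi} for the normalized case $h_1=0$, $h_2=1$, and then observes that any $C^{(i)}_{h_1,h_2}$ is affinely equivalent to $C^{(i)}_{0,1}$, so the cap and covering properties transfer. You instead give a self-contained argument: the cap property by the pigeonhole/hyperplane argument on the first coordinate, and the covering property by an explicit coordinatewise computation --- reducing to the linear system in $a_j^2,b_j^2$ with determinant $\mu(1+\mu)\neq 0$, recovering $a_j,b_j$ via the Frobenius bijection, and checking consistency of the linear and quadratic conditions using $\bigl((1+\mu)a_j+\mu b_j\bigr)^2=(1+\mu)^2a_j^2+\mu^2b_j^2$ in characteristic $2$. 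This is essentially the same technique the paper itself uses elsewhere (compare the systems solved in Lemmas \ref{lem0} and \ref{lem0B}), so your argument fits the paper's toolkit and has the merit of making explicit where even characteristic enters, rather than outsourcing it to the cited proposition; the paper's version buys brevity at the cost of opacity. One point in your favor worth noting: your restriction $q\geq 4$ (needed so that Lemma \ref{GI1} applies in the case $t\in\{h_1,h_2\}$) is genuinely necessary --- for $q=2$ the covering claim fails outright, since lines of $AG(2i+1,2)$ have only two points --- and this hypothesis is only implicit in the paper's statement, being inherited from the citation and from the standing assumption $q>4$ in Proposition \ref{chiave}.
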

\begin{proof} By  \cite[Proposition 2.9]{Gi} the assertion holds for
$h_1=0$, $h_2=1$. As clearly any $C^{(i)}_{h_1,h_2}$ is affinely
equivalent to $C^{(i)}_{0,1}$, the claim follows.
\end{proof}

The set $PG(2s+1,q)\setminus L_1$ can be viewed as affine space
$AG(2s+1,q)$, and similarly  $L_1\setminus L_2$ as an affine space
$AG(2s,q)$. Note that for $s>0$,
\begin{equation}\label{link1}
\begin{array}{c}
A_1^{(2s+1)}\cup \{(1,m_1,0\ldots,0),(1,m_2,0\ldots,0)\}=
C^{(s)}_{m_1,m_2},\\ A_2^{(2s+1)}\cup \{(0,1,0\ldots,0)\}= \cP^s.
\end{array}
\end{equation}

We are now in a position to prove that $ K_{m_1,m_2}^{(2s+1)} $ is
a cap.
\begin{lemma} The set $ K_{m_1,m_2}^{(2s+1)} $ is a cap.
\end{lemma}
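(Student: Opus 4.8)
The plan is to prove by induction on $s$ a pair of statements simultaneously: \textbf{(P1)} the set $K^{(2s+1)}_{m_1,m_2}$ is a cap, and the auxiliary \textbf{(P2)} that $K^{(2s+1)}_{m_1,m_2}$ contains no point admitting a representative of the parabolic shape $(c_1,c_1^2,c_2,c_2^2,\dots,c_{s+1},c_{s+1}^2)$. The base case $s=0$ is immediate, since $K^{(1)}_{m_1,m_2}=\{(1,0),(0,1)\}$ consists of two points, neither of which has the form $(c,c^2)$. Throughout the inductive step I would exploit the decomposition of $PG(2s+1,q)$ by the hyperplane $L_1\colon X_1=0$ and the subspace $L_2\colon X_1=X_2=0$, together with the identifications in (\ref{link1}): on the affine chart $X_1\neq0$ the points $A_1^{(2s+1)}\cup\{U\}$ lie in $C^{(s)}_{m_1,m_2}\cup\{U\}$, on $L_1\setminus L_2$ the points $A_2^{(2s+1)}$ lie in the parabola $\cP^s$, and $A_3^{(2s+1)}$ is a copy of $K^{(2s-1)}_{m_1,m_2}$ sitting in $L_2$.

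I would first establish (P2) at level $s$, assuming it at level $s-1$. Testing a parabolic point $P=(c_1,c_1^2,\dots,c_{s+1},c_{s+1}^2)$ against each of the four pieces of $K^{(2s+1)}_{m_1,m_2}$: inspection of the first two coordinates rules out $P\in A_2^{(2s+1)}$ and $P=U$; membership $P\in A_1^{(2s+1)}$ would force $m_i=c_1$ after matching the first pair, and then any later nonzero coordinate forces the scaling factor to be $1$ (here characteristic $2$ is used), whence $c_1=1$ and $m_i=1$, contradicting $m_i\neq1$; finally $P\in A_3^{(2s+1)}$ would require $c_1=0$ and reduce $P$ to a parabolic point of $K^{(2s-1)}_{m_1,m_2}$, excluded by (P2) at level $s-1$. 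This parabolic-exclusion step, resting on $m_i\neq1$, is the technical core of the whole argument.

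For (P1), since $L_1$ is a hyperplane, any three collinear points of $K^{(2s+1)}_{m_1,m_2}$ have either none, exactly one, or all three of them on $L_1$ (two points on $L_1$ would pull the whole line into $L_1$). If none lie on $L_1$, the three points belong to $C^{(s)}_{m_1,m_2}\cup\{U\}$; three points of $C^{(s)}_{m_1,m_2}$ are never collinear by Lemma \ref{GI2}, while a line through $U$ meets $A_1^{(2s+1)}$ in at most one point, because two points of $A_1^{(2s+1)}$ collinear with $U$ would, via their nonzero parabolic part, force the scaling factor $\lambda$ between them to satisfy $\lambda^2=\lambda$ in characteristic $2$, hence $\lambda\in\{0,1\}$ and the points coincide. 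If all three lie on $L_1$, I repeat the same dichotomy inside $L_1$ relative to $L_2$: three points of $A_2^{(2s+1)}\subset\cP^s$ are never collinear since $\cP^s$ is a cap, three points of $A_3^{(2s+1)}\cong K^{(2s-1)}_{m_1,m_2}$ are never collinear by (P1) at level $s-1$, and for two points of $A_2^{(2s+1)}$ together with one of $A_3^{(2s+1)}$ the only possible third point is the sum $P_1+P_2$, which is a parabolic point of $L_2$ and hence excluded by (P2) at level $s-1$.

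The remaining case, one point on $L_1$ and two on the chart $X_1\neq0$, is where the conditions on $m_1,m_2$ are consumed. The line through the two chart points $P_1,P_2$ meets $L_1$ in the single point $P_1+P_2$, so it suffices to check $P_1+P_2\notin A_2^{(2s+1)}\cup A_3^{(2s+1)}$. If $P_1,P_2\in A_1^{(2s+1)}$ carry the same $m_i$, or if one of them is $U$, the check collapses to (P2) at level $s-1$ or to $m_i\neq1$; if they carry different $m_i$, then $P_1+P_2$ has second coordinate $m_1+m_2\neq0$, and forcing it into the parabolic shape of $A_2^{(2s+1)}$ would require $m_1+m_2=(m_1+m_2)^2$, i.e.\ $m_1+m_2=1$, which is excluded because $(m_1+m_2)^3=1$ would then follow, contradicting the hypothesis. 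Assembling the three cases yields (P1) at level $s$ and completes the induction. The main obstacle is precisely the bookkeeping that makes every mixed-collinearity case collapse either to the parabolic-exclusion (P2) or to one of the three arithmetic constraints $m_1\neq m_2$, $m_i\neq1$, $(m_1+m_2)^3\neq1$; isolating (P2) as a companion inductive statement is what keeps this organized.
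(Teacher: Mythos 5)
Your proof is correct. Its skeleton coincides with the paper's: induction on $s$, splitting a putative collinear triple according to the pieces $A_1^{(2s+1)}$, $A_2^{(2s+1)}$, $A_3^{(2s+1)}$, $U$ (equivalently, by how many of the three points lie on $L_1$, then on $L_2$), and the same three arithmetic ingredients $m_i\neq 1$, $m_1\neq m_2$, $(m_1+m_2)^3\neq 1$, plus Lemma \ref{GI2} for triples off $L_1$. The genuine difference is your companion statement (P2). The paper runs the induction on the cap property alone; whenever the forced third point $\lambda(P_1+P_2)$ lands in $L_2$, it works out what a point of $K^{(2s-1)}_{m_1,m_2}$ with prescribed leading zeros must look like by unfolding the recursive definition down to the first nonzero coordinate pair, and then derives the contradiction from the last pair $(d_s,d_s^2)$ --- an argument that, as literally written, is inconclusive when $c_s=0$ and really needs the observation your (P2) makes systematic: the parabolic tail of an $A_1$- or $A_2$-type point is nonzero by definition, and any nonzero pair forces the scaling factor to equal $1$. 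Promoting ``no point of $K^{(2i+1)}_{m_1,m_2}$ has a parabolic representative'' to an inductive invariant converts that recursion-unfolding into a clean one-level induction and makes every mixed collinearity case a two-line check, so your organization is both tidier and slightly more robust than the paper's, at essentially no extra cost. One micro-detail to add in your different-$m_i$ case: if the parabolic part of $P_1+P_2$ vanishes identically, the point is $(0,1,0,\ldots,0)$, which lies outside $A_2^{(2s+1)}$ simply because the definition of $A_2^{(2s+1)}$ demands a nonzero parabolic tail; the identity $(m_1+m_2)^2=m_1+m_2$ is not what excludes it there (in your other degenerate subcases a vanishing tail just means $P_1=P_2$).
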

\begin{proof}
We prove the claim by induction on $s$. The case $s=0$ is trivial.
Assume that $s>0$. Throughout this proof, let
$K=K_{m_1,m_2}^{(2s+1)}$ and $A_j=A_j^{(2s+1)}$ for $j=1,2,3$.

Note that $A_3$ is a cap by inductive hypothesis. Lemma \ref{GI2}
together with (\ref{link1}) yields that $A_1$ is a cap. Similarly,
$A_2$ is a cap by (\ref{link1}).

Assume  that $K$ is not a cap. Let $P_1,P_2$ and $P_3$ be
collinear points in $K$. Assume first that no $P_i$ coincides with
$U$.  Let
$$
P_1=(X_1^{(1)},X_2^{(1)},\ldots,X_{2s+2}^{(1)}),\,
P_2=(X_1^{(2)},X_2^{(2)},\ldots,X_{2s+2}^{(2)}),\,
P_3=(X_1^{(3)},X_2^{(3)},\ldots,X_{2s+2}^{(3)}).
$$
 Let $v_j=\min\{v\mid X_v^{(j)}\neq 0\}$. Assume without loss of
 generality that $v_1\le v_2 \le v_3$. Note that it is impossible
 that $v_1<v_2$, as in this case the line through $P_2$ and $P_3$
 is contained in the subspace $T:X_1=X_2=\ldots=X_{v_2-1}=0$,
 whereas $P_1\notin T$.
Then $v_1=v_2\le v_3. $ Note also that  $v_1=v_2=v_3$ cannot
occur, as in this case $\{P_1,P_2,P_3\}\subset A_j$ for some
$j=1,2,3$. Moreover, as it is not possible  that
$\{P_1,P_2,P_3\}\subset A_3$, we have that $v_1\le 2$ holds. Then
we are left with the following two cases.

{\em Case (1)}: {\em $v_1=v_2=1, v_3\ge 2$}.  Write $P_3=\lambda
P_1+\mu P_2$, $P_1=(1,m_k,a_1,a_1^2,\ldots,a_s,a_s^2)$,
$P_2=(1,m_l,b_1,b_1^2,\ldots,b_s,b_s^2)$, with $k,l\in \{1,2\}$.
Then clearly $\lambda=\mu$. Note that
$$
P_3=\lambda(P_1+P_2)=(0,\lambda(m_k+m_l),\lambda c_1,\lambda
c_1^2,\ldots,\lambda c_s,\lambda c_s^2),\,
$$
where $c_u=a_u+b_u$, $u=1,\ldots,s$.

Assume that $m_k=m_l$. Note that $c_u\ne 0$ for some $u$, otherwise
$P_1=P_2$. Let $v$ be the minimum $u$ for which $c_u\ne 0$. Then
 $v<s$, otherwise
$$P_3=\lambda(P_1+P_2)=(0,\ldots,0,\lambda c_s,\lambda c_s^2)=(0,\ldots,0,1,c_s)\notin K\,.$$
 Therefore $P_3=(0,\ldots,0,1,m_k,d_{v+1},d_{v+1}^2,\ldots,d_s,d_s^2)$
 for some $k\in \{1,2\}$, $d_{v+1},\ldots,d_s\in \fq$, whence
\begin{equation*}
(1,m_k,d_{v+1},d_{v+1}^2,\ldots,d_s,d_s^2)=(\lambda c_{v},\lambda
c_{v}^2,\lambda c_{v+1},\lambda c_{v+1}^2,\ldots,\lambda
c_s,\lambda c_s^2)\,
\end{equation*}
holds. This implies $\lambda =1/c_{v},$ $m_{k}=c_{v}$, and therefore $d_{s}=c_{s}/m_{k}$, $%
d_{s}^{2}=c_{s}^{2}/m_{k}$, which is impossible as $m_{k}\neq 1$.
%
%

Assume then that $m_k\ne m_l$. Then $P_3\in A_2$, and hence
$P_3=(0,1,d_1,d_1^2,\ldots,d_s,d_s^2)$
 for some  $d_{1},\ldots,d_s\in \fq$. Then
\begin{equation*}
(1,d_1,d_1^2,\ldots,d_s,d_s^2) =(\lambda(m_1+m_2),\lambda
c_1,\lambda c_1^2,\ldots,\lambda c_s,\lambda c_s^2)\,
\end{equation*}
holds.  As $P_3\in K$, $d_u\neq 0$ for some $u$. Then
$d_u=\frac{c_u}{m_1+m_2}$, $d_u^2=\frac{c_u^2}{m_1+m_2}$, which
yields $(m_1+m_2)^2=m_1+m_2$. But this is impossible as
$m_1+m_2\notin \{0,1\}$.

{\em Case (2)}: {\em $v_1=v_2=2$, $v_3\ge 3$}. Write $P_3=\lambda
P_1+\mu P_2$, with $P_1=(0,1,a_1,a_1^2,\ldots,a_s,a_s^2)$,
$P_2=(0,1,b_1,b_1^2,\ldots,b_s,b_s^2)$. Then clearly
$\lambda=\mu$. Note that
$$
P_3=\lambda(P_1+P_2)=(0,0,\lambda c_1,\lambda c_1^2,\ldots,\lambda
c_s,\lambda c_s^2),\,
$$
where $c_l=a_l+b_l$.  Then a contradiction can be obtained as in
case (1), $m_k=m_l$.

To complete the proof we only need to show that the point $U$ is not
collinear with two points $P_2,P_3$ in $K\setminus\{U\}$. Clearly,
either $P_2$ or $P_3$  belongs to $A_1$. Assume without loss of
generality that $P_2=(1,m_k,a_1,a_1^2,\ldots,a_s,a_s^2)\in A_1$. We
first deal with the case $P_3\in A_1$. Let
$P_3=(1,m_l,b_1,b_1^2,\ldots,b_s,b_s^2)$. Write $U=\lambda P_2+\mu
P_3$. If $a_u=0$ and $b_u\ne 0$ for some $u$, then $\mu=0$, which is
impossible as $U\neq P_2$. Similarly the case $a_u\ne 0$ and $b_u=
0$ can be ruled out. By definition $a_{u}\neq 0$ for some $u$.
Therefore $b_u\neq 0$. Note that
$$
D\left(\begin{array}{ccc} 1 & 0 & 0\\ 1 & a_u & a_u^2\\
1 & b_u &b_u^2
\end{array}\right)=0,
$$
yields $a_u=b_u$. Then, from
$$
D\left(\begin{array}{ccc} 1 & 0 & 0 \\ 1 & m_k & a_u\\
1 & m_l &b_u
\end{array}\right)=0
$$
it follows $m_k=m_l$. Then $P_2=P_3$ follows, a contradiction.
Assume then that $P_3\in A_2\cup A_3$. In this case
$$P_3=\lambda(U+P_2)=(0,m_k,a_1,a_1^2,\ldots,a_s,a_s^2).$$
Therefore $P_3\in A_2$, and hence
$$
P_3=(0,1,b_1,b_1^2,\ldots,b_s,b_s^2)
$$
for some $b_u\in \fq$. Then, for any $u$ with $b_u\ne 0$, we have
$ a_u/m_k=b_u, a_u^2/m_k=b_u^2$, which yields $m_k=1$, a
contradiction.
\end{proof}

The completeness properties of the cap $K_{m_1,m_2}^{(2s+1)}$ can
be now investigated.
\begin{lemma}\label{chiave0} Assume that $q>4$. The cap $K=K_{m_1,m_2}^{(2s+1)}$ covers all the points in
$PG(2s+1,q)$, with the exception of points
$$
(1,m,0,0\ldots,0),\,\,\,m\in \fq, m\neq 0,
$$
when $s>0$.
\end{lemma}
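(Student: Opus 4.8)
The natural approach is induction on $s$. For $s=0$ the set $K^{(1)}_{m_1,m_2}=\{(1,0),(0,1)\}$ has a single secant equal to the whole line $PG(1,q)$, so every point is covered and there is no exception; this anchors the induction. For $s>0$ I would split $PG(2s+1,q)$ into three parts according to the first two coordinates, namely the affine chart $PG(2s+1,q)\setminus L_1$, the slab $L_1\setminus L_2$, and the subspace $L_2$, and establish coverage of each separately, exploiting that $K\cap L_2=A_3^{(2s+1)}$ while the affine chart and the slab carry $A_1^{(2s+1)}$ and $A_2^{(2s+1)}$ respectively.

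On the affine chart $PG(2s+1,q)\setminus L_1\cong AG(2s+1,q)$, relation (\ref{link1}) identifies $A_1^{(2s+1)}$ with the cap $C^{(s)}_{m_1,m_2}$ from which the two points $(1,m_1,0,\dots,0),(1,m_2,0,\dots,0)$ have been deleted. Lemma \ref{GI2} says that every point off $C^{(s)}_{m_1,m_2}$ lies on a secant of $C^{(s)}_{m_1,m_2}$; the work is to upgrade this to a secant whose two endpoints actually lie in $A_1^{(2s+1)}$, the only obstruction being a point all of whose $C^{(s)}_{m_1,m_2}$-secants run through one of the two deleted points. Likewise, on $L_1\setminus L_2\cong AG(2s,q)$ the set $A_2^{(2s+1)}$ is $\cP^s$ with the single point $(0,1,0,\dots,0)$ deleted, and Lemma \ref{GI1} furnishes $\tfrac{q-2}{2}$ secants of $\cP^s$ through each external point; since at most one of them can pass through the deleted vertex, for $q>4$ a genuine $A_2^{(2s+1)}$-secant survives.

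For the subspace $L_2\cong PG(2s-1,q)$ I would invoke the inductive hypothesis on $A_3^{(2s+1)}$, the isomorphic image of $K^{(2s-1)}_{m_1,m_2}$, which covers $L_2$ outside the image of the exceptional points. The key structural remark is that a line not contained in $L_2$ meets $L_2$ in a single point, so a point of $L_2$ can be covered only by a secant internal to $A_3^{(2s+1)}$ or by a secant both of whose endpoints have proportional $(X_1,X_2)$-projections. Inspecting the four possible projections $(1,m_k)$, $(0,1)$, $(1,0)$, $(0,0)$ of the pieces $A_1^{(2s+1)},A_2^{(2s+1)},U,A_3^{(2s+1)}$ shows that the latter secants arise exactly from same-$m_k$ pairs in $A_1^{(2s+1)}$ or pairs in $A_2^{(2s+1)}$, and by (\ref{link1}) they sweep out precisely the cone $(0,0,c_1,c_1^2,\dots,c_s,c_s^2)$. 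One then combines the inductive coverage of $L_2$ with this cone to decide which points of $L_2$ remain uncovered.

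Finally I would assemble the three parts, note that $U=(1,0,\dots,0)$ is a cap point (hence not an exception), and collect the surviving uncovered points into the line $\{(1,m,0,\dots,0)\mid m\ne 0\}$. The main obstacle I anticipate is precisely the bookkeeping on $L_2$: one must check that the exceptions produced by the inductive step, once supplemented by the cone secants, collapse back onto the single coordinate line $(1,m,0,\dots,0)$ rather than proliferating into new families at each level; the analogous but milder issue for the two deleted affine points of $C^{(s)}_{m_1,m_2}$ must likewise be shown to stay within that same line. Controlling this non-accumulation of exceptions through the recursion is the heart of the argument.
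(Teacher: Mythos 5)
Your overall skeleton matches the paper's proof: induction on $s$, the three-way split into $PG(2s+1,q)\setminus L_1$, $L_1\setminus L_2$, $L_2$, the multiplicity argument via Lemma \ref{GI1} on the slab, and the observation that the inductive exceptions $(0,0,1,m,0,\ldots,0)\sim(0,0,m,m^2,0,\ldots,0)$ in $L_2$ lie on the cone swept by same-$m_k$ pairs of $A_1^{(2s+1)}$ (the paper writes this secant explicitly as $(1,m_1,a,a^2,0,\ldots,0)+(1,m_1,b,b^2,0,\ldots,0)$ with $a+b=m$). Two parts of your plan are genuinely incomplete, one small and one fatal as written. The small one: the deleted vertex $(0,1,0,\ldots,0)$ is not external to $\cP^s$, so Lemma \ref{GI1} says nothing about it; it must be covered separately, which the paper does by the mixed secant $\frac{1}{m_1+m_2}\left((1,m_1,a,\ldots,a_s^2)+(1,m_2,a,\ldots,a_s^2)\right)$.

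The fatal one is the affine chart, which you dismiss as an ``analogous but milder issue''; it is in fact the bulk of the proof and the only place where the hypotheses $m_i\neq 1$, $m_1\neq m_2$, $(m_1+m_2)^3\neq 1$ enter --- hypotheses your proposal never uses, which is the telltale sign of the gap. Lemma \ref{GI2} guarantees only \emph{one} secant of $C^{(s)}_{m_1,m_2}$ through an external point, so no multiplicity argument can avoid the two deleted points, and the obstruction set is not small: it is the union of the two cones $C_1\cup C_2$ projecting $A_1^{(2s+1)}$ from $(1,m_1,0,\ldots,0)$ and $(1,m_2,0,\ldots,0)$, i.e.\ all points $\left(1,m,\frac{m+m_j}{m_1+m_2}a_1,\frac{m+m_j}{m_1+m_2}a_1^2,\ldots\right)$, which do not ``collapse onto the line $(1,m,0,\ldots,0)$'' --- they form roughly two points on every vertical line of every affine plane $X_2=mX_1$. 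To cover them one needs a new ingredient absent from your proposal: secants joining an affine point of $A_1^{(2s+1)}$ to an infinite point of $A_2^{(2s+1)}$, obtained by solving $P=P_1+s_mP_3$ and $P=P_2+r_mP_3$ with $s_m=m+m_1$, $r_m=m+m_2$; non-coverage then forces the three conditions (A), (B), (C) on $\rho=d_l^2/d_l'$, whose incompatibility is exactly where $(m_1+m_2)^3\neq 1$ and $m_i\neq 1$ are used (the cases $m=m_1+1$ and $m=m_2+1$ must be treated separately). Moreover the planes $m\in\{m_1,m_2\}$ escape this scheme entirely and require yet another argument (the paper identifies $K$ restricted to the hyperplane $X_2=m_jX_1$ with $\cP^s$ minus a point and reuses Lemma \ref{GI1}). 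Without these steps, the ``non-accumulation of exceptions'' you correctly identify as the heart of the matter remains an assertion, not a proof.
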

\begin{proof}
We prove the assertion by induction on $s$. The case $s=0$ is
trivial. Assume then that $s>0$.  Note that by inductive hypothesis
any point $P\in L_2$ is covered by the secants of $A_3^{(2s+1)}$,
provided that $P$ is not of type $(0,0,1,m,0\ldots,0),\,m\neq 0$. We
first prove that any $P= (0,0,1,m,0\ldots,0)$, $m\neq 0$, is
actually covered by the secants of $K$. If $s=1$, that is $P=
(0,0,1,m)$, then clearly
$$
P=(0,0,1,0)+m(0,0,0,1);
$$
otherwise,
$$
P=(1,m_1,a,a^2,0,\ldots,0)+(1,m_1,b,b^2,0,\ldots,0)
$$
for any $a,b$ such that $a+b=m$. This proves that all the points
in $L_2$ are covered by the secants of $K$.

We now consider points $P\in L_1\setminus L_2$. From Lemma
\ref{GI1} together with (\ref{link1}), it follows that if $P\notin
A_2^{(2s+1)}$, $P\neq (0,1,0\ldots,0)$, then $P$ is contained in
 $\frac{q-2}{2}$ distinct lines joining  two distinct
points in $A_2^{(2s+1)}\cup \{(0,1,0\ldots,0)\}$. The hypothesis
$q>4$ implies $\frac{q-2}{2}>1$, whence if $P\neq
(0,1,0\ldots,0)$, then $P$ is covered by the secants of
$A_2^{(2s+1)}$. Actually, also $P=(0,1,0\ldots,0)$ is covered by
the secants of $K$, as
$$
P=\frac{1}{m_1+m_2}\left((1,m_1,a_i,a_i^2,\ldots,a_s^2)+(1,m_2,a_i,a_i^2,\ldots,a_s^2)\right).
$$
Now let $P=(1,m,d_1,d_1',\ldots,d_s,d_s')\in PG(2s+1,q)\setminus
L_1$, with either $d_l\neq 0$ or $d_l'\neq 0$ for some $l$. Assume
that $m=m_j$, $j\in \{1,2\}$. Let $L^{(m_j)}$ be the hyperplane of
equation $X_2=m_jX_1$. Note that $L^{(m_j)}\setminus L_2$ is an
affine space $AG(2s,q)$, and that
$$
(K\cap (L^{(m_j)}\setminus L_2))\cup  \{(1,m_j,0,\ldots,0)\}=\cP^s.
$$
Then by Lemma \ref{GI1}, through every point
$P=(1,m_j,d_1,d_1',\ldots,d_s,d_s')\in L^{(m_j)}\setminus L_2$
with  $d_u'\neq d_u^2$ for some $u\in \{1,\ldots,s\}$, there pass
$\frac{q-2}{2}$ lines joining  two distinct points in $(K\cap
(L^{(m_j)}\setminus L_2))\cup  \{(1,m_j,0,\ldots,0)\}$. The
hypothesis $q>4$ ensures $\frac{q-2}{2}>1$, whence $P$ is covered
by the secants of $K\cap (L^{(m_j)}\setminus L_2)$.

Finally we consider points $P=(1,m,d_1,d_1',\ldots,d_s,d_s')\in
PG(2s+1,q)\setminus L_1$, $m\notin \{m_1,m_2\}$, with either
$d_u\neq 0$ or $d_u'\neq 0$ for some $u$. Let $l$ be the smallest
index for which either $d_l\neq 0$ or $d_l'\neq 0$. Let
$t_m=(m+m_1)/(m_1+m_2)$, $s_m=m+m_1$, $r_m=m+m_2$. {}From Lemma
\ref{GI2} together with (\ref{link1}), it follows that
$A_1^{(2s+1)}\cup \{(1,m_1,0\ldots,0),(1,m_2,0,\ldots,0)\}$ is a cap
which covers all the points in $PG(2s+1)\setminus L_1$. Therefore,
$P$ is covered by the secants of $A_1^{(2s+1)}$ provided that $P$
does not belong to $C_1\cup C_2$, where $C_j$ is the union of lines
joining $(1,m_j,0,\ldots,0)$ to some point in $A_1^{(2s+1)}$.

Straightforward computation yields that points in $(C_1\cup
C_2)\setminus L_1$ such that $X_2=mX_1$ are
$$
\left(1,m,\frac{m+m_j}{m_1+m_2}a_1,\frac{m+m_j}{m_1+m_2}a_1^2,\ldots,\frac{m+m_j}{m_1+m_2}a_s,\frac{m+m_j}{m_1+m_2}a_s^2\right),
$$
$a_1\in \fq,\,(a_1,\ldots,a_s)\neq (0,\ldots,0), \,\,j=1,2$. If $P$
is a point of this type, then either $d_l'= d_l^2/t_m$ or $d_l'=
d_l^2/(t_m+1)$, according to whether $j=1$ or $j=2$. Therefore $P$
is covered by the secants of $K\setminus L_1$ provided that
$d_l'\neq d_l^2/t_m$ and $d_l'\neq d_l^2/(t_m+1)$.

Now we establish whether $P$ belongs to a line joining a point in
$K\setminus L_1$ to a point in $K\cap L_1$. We first look for points
\begin{equation*}
P_1=(1,m_1,a_1,a_1^2,\ldots,a_s,a_s^2),\,\,
P_3=(0,1,b_1,b_1^2,\ldots,b_s,b_s^2)
\end{equation*}
such that $P=P_1+s_mP_3$, that is,
$$
 a_u+s_mb_u=d_u, \quad (a_u^2+s_mb_u^2)=d'_{u},\quad u=1,\ldots,s.
$$
The  system
$$
\left\{
\begin{array}{l}
a_u^2+s_m^2b_u^2=d_u^2\\
a_u^2+s_mb_u^2=d_u'
\end{array}
\right.
$$
has precisely one solution when $s_m\ne 1$, that is, $m\ne m_1+1$.
This solution corresponds to two points in $K$ provided that not all
$a_u$ and not all $b_u$ are equal to $0$, that is
\begin{equation*}
P\notin \left\{ (1,m,a_1,a_1^2,\ldots,a_s,a_s^2),(1,m,
s_mb_1,s_mb_1^2,\ldots,s_mb_s,s_mb_s^2)\right\}.
\end{equation*}
Next we look for points
$$
P_2=(1,m_2,a_1,a_1^2,\ldots,a_s,a_s^2),\,\,
P_3=(0,1,b_1,b_1^2,\ldots,b_s,b_s^2)
$$
such that $P=P_2+r_mP_3$.  Arguing as above, we deduce that  $P_2$
and $P_3$ exist, and both belong to $K$, provided that $m\ne
m_2+1$ and
\begin{equation*}
P\notin \left\{(1,m,a_1,a_1^2,\ldots,a_s,a_s^2),(1,m,
r_mb_1,r_mb_1^2,\ldots,r_mb_s,r_mb_s^2)\right\}.
\end{equation*}
To sum up, if $P$ is not covered by the secants of $K$ then all
the conditions (A), (B) and (C) below hold:
\begin{itemize}
\item[(A)] either $d'_{l}=d_{l}^2/t_m$ or
$d'_{l}=d_{l}^2/(1+t_m)$;

\item[(B)] either $m=m_1+1$, or $d'_{l}=d_{l}^2$ or
$d'_{l}=d_{l}^2/s_m$;

\item[(C)] either $m=m_2+1$, or $d'_{l}=d_{l}^2$ or
$d'_{l}=d_{l}^2/r_m$.
\end{itemize}
{}As either $d_l\neq 0$ or $d_l'\neq 0$, from (A) it follows that
actually both $d_l$ and $d_l'$ are different from $0$. Let
$\rho=\frac{d_l^2}{d'_l}$. Let $E_1=\{t_m,1+t_m\}$, $E_2=\{1,s_m\}$,
$E_3=\{1,r_m\}$.

 Assume first that $m\ne m_1+1$ and $m\ne m_2+1$. Then $\rho$ belongs to
 all sets $E_1$, $E_2$, $E_3$. But this is
 impossible as the intersection $E_1\cap E_2 \cap E_3$ is empty.

Assume now that $m=m_1+1$. Then $\rho\in E_1\cap E_3$. This yields
that either $r_m=t_m$ or $r_m=t_{m}+1$; that is, either
$$m_1+m_2+1=\frac{1}{m_1+m_2}\quad \text{ or }\quad
m_1+m_2=\frac{1}{m_1+m_2}.
$$
The former case yields that $(m_1+m_2)$ is a root of $T^2+T+1$;
the latter  $m_1+m_2=1$. Both are not possible as $(m_1+m_2)^3\ne
1$.

Finally, assume that $m=m_2+1$. Then $\rho\in E_1\cap E_2$, which
yields that either $s_m=t_m$ or $s_m=t_{m}+1$; that is, either
$$
m_1+m_2+1=\frac{m_1+m_2+1}{m_1+m_2}\quad \text{ or }\quad
m_1+m_2=\frac{m_1+m_2+1}{m_1+m_2} .
$$
A contradiction is then obtained as  for the case $m=m_1+1$. This
completes the proof.
\end{proof}
Now we are in a position to conclude the proof of Proposition
\ref{chiave}.
%
It is enough to note that in the proof of Lemma \ref{chiave0} the
point $(1,0,\ldots,0)$ is not used to prove that each point of
$PG(2s+1,q)$ different from $ (1,m,0,0\ldots,0) $ is covered by
the secant of $K_{m_1,m_2}^{(2s+1)}$.

\end{document}